\def\dim{{\operatorname{dim}}}
\def\id{{\operatorname{id}}}
\def \eps{\varepsilon}
\def\1{{\mathbb{1}}}
\def\C{{\mathbb C}}
\def\R{{\mathbb R}}
\def\Z{{\mathbb Z}}
\def\Q{{\mathbb Q}}
\def\N{{\mathbb N}}
\def\cC{{\mathscr C}}
\def\cD{{\mathscr D}}
\def\Com{{\mathscr Com}}
\def\O{{\mathscr O}}
\def\k{{\mathbb K}}
\def\des{{\mathcal S}^{-1}}
\def\dest{\widetilde{{\mathcal S}}^{-1}}
\def\D{{\mathcal D}}
\def\Sph{{\mathcal Sph}}
\newcommand{\OO}{\mathcal{O}}
\newcommand{\OP}{\mathcal{P}}
\newcommand{\OR}{\mathcal{R}}
\newcommand{\e}{\mathcal{E}}
\newcommand{\Ch}{\operatorname{Ch}}
\newcommand{\Nat}{\operatorname{Nat}}
\newcommand{\oc}[2]{[\begin{subarray}{c} #1 \\ #2 \end{subarray}]}
\newcommand{\bC}{\overline{C}}
\newcommand{\bD}{\overline{D}}
\newcommand{\bNat}{\overline{\Nat}}
\newcommand{\SD}{\mathcal{SD}}
\newcommand{\lD}{l\mathcal{D}^{>0}}
\newcommand{\lDa}{l\mathcal{D}}
\newcommand{\lDc}{l\mathcal{D}^{cst}}
\newcommand{\ilD}{il\mathcal{D}^{>0}}
\newcommand{\ilDa}{il\mathcal{D}}
\newcommand{\plD}{pl\mathcal{D}^{>0}}
\newcommand{\plDa}{pl\mathcal{D}}
\newcommand{\plDc}{pl\mathcal{D}^{cst}}
\newcommand{\iplDa}{ipl\mathcal{D}}
\newcommand{\tiplDa}{\widetilde{ipl\mathcal{D}}_\Com}
\newcommand{\tplDa}{\widetilde{pl\mathcal{D}}_\Com}
\numberwithin{equation}{section}
\newtheorem{satz}{Satz}[section]
\newtheorem{Theorem}[satz]{Theorem}
\newtheorem{Cor}[satz]{Corollary}
\newtheorem{Lemma}[satz]{Lemma}
\newtheorem{Prop}[satz]{Proposition}
\newtheorem{Conjecture}[satz]{Conjecture}
\theoremstyle{definition}
\newtheorem{Remark}[satz]{Remark}
\newtheorem{Example}[satz]{Example}
\newtheorem{Def}[satz]{Definition}
\newtheorem{Th}{Theorem}
\newtheorem{Co}[Th]{Corollary}
\title[Operations on $C_*(A,A)$ for commutative Frobenius algebras]{Natural operations on the Hochschild complex of commutative Frobenius algebras via the complex of looped diagrams}
\address{Angela Klamt, Department of Mathematical Sciences, 
         University of Copenhagen, 
         Universitetsparken 5,
       2100 Copenhagen, 
         Denmark}
\email{angela@math.ku.dk}\date{\today}
\author{Angela Klamt}
\begin{document}

\begin{abstract}
We define a dg-category of looped diagrams which we use to construct operations on the Hochschild complex of commutative Frobenius dg-algebras. We show that we recover the operations known for symmetric Frobenius dg-algebras constructed using Sullivan chord diagrams as well as all formal operations for commutative algebras (including Loday's lambda operations) and prove that there is a chain level version of a suspended Cactus operad inside the complex of looped diagrams. This recovers the suspended BV algebra structure on the Hochschild homology of commutative Frobenius algebras defined by Abbaspour and proves that it comes from an action on the Hochschild chains.
\end{abstract}
\maketitle
\section*{Introduction}
To a dg-algebra $A$ one associates the Hochschild chain complex $C_*(A,A)$. Operations of the form
\[C_*(A,A)^{\otimes n_1} \otimes A^{\otimes m_1} \to C_*(A,A)^{\otimes n_2} \otimes A^{\otimes m_2}\]
have been investigated by many authors. We are interested in those operations which exist for all algebras of a certain class, more concretely all algebras over a given operad or PROP. In \cite{wahl12}, the complex of so-called formal operations is introduced, a more computable complex approximating the complex of all natural operations for a given class of ($A_\infty$--)algebras. In this paper we build a combinatorial complex mapping to the complex of formal operations for the case of commutative Frobenius dg-algebras, defining in particular a large family of operations for commutative Frobenius dg-algebras.

Before describing the complex of operations, we recall some of the operations known so far, which we want to be covered by our new complex. 
 
One of the main motivations for investigating operations on Hochschild homology is given by string topology. String topology started in 1999 when Chas and Sullivan in \cite{chas99} gave a construction of a product $H_*(LM) \otimes H_*(LM) \to H_{*-d}(LM)$ for $M$ a closed oriented manifold of dimension $d$ and $LM$ the free loop space on $M$, that makes $H_*(LM)$ into a BV-algebra. Afterward, more operations were discovered and in \cite{godi07} the structure of an open-closed HCFT was exhibited on the pair $(H_*(M), H_*(LM))$, yielding a whole family of operations
\[H_*(LM)^{\otimes n_2} \otimes H_*(M)^{\otimes m_2} \to H_*(LM)^{\otimes n_1} \otimes  H_*(M)^{\otimes m_1}\]
parametrized by the moduli space of Riemann surfaces.

Taking coefficients in a field and $M$ to be a 1-connected closed oriented manifold, we have an isomorphism \[HH_*(C^{-*}(M), C^{-*}(M)) \cong H^{-*}(LM).\] Moreover, if $M$ is a formal manifold, i.e. if we have a weak equivalence $C^*(M) \simeq H^*(M)$ preserving the multiplication, we obtain an isomorphism $HH_*(C^{-*}(M), C^{-*}(M)) \cong HH_*(H^{-*}(M), H^{-*}(M))$. Thus, under these conditions, operations
\[HH_*(H^{-*}(M), H^{-*}(M))^{\otimes n_1} \otimes H^{-*}(M)^{\otimes m_1} \to HH_*(H^{-*}(M), H^{-*}(M))^{\otimes n_2} \otimes H^{-*}(M)^{\otimes m_2}\]
are equivalent to operations
\[H^{-*}(LM)^{\otimes n_1} \otimes H^{-*}(M)^{\otimes m_1} \to H^{-*}(LM)^{\otimes n_2} \otimes  H^{-*}(M)^{\otimes m_2},\]
which are dual to the operations we ask for in string topology. On the other hand, $H^{-*}(M)$ is a commutative Frobenius algebra, thus constructing operations on the Hochschild homology of commutative Frobenius algebras gives us (dual) string operations. This correspondence can be applied even more generally. Working with a field of characteristic zero and taking the deRham complex $\Omega^\bullet(M)$ instead of singular cochains, in \cite{lamb07} Lambrechts and Stanley prove that there is a commutative differential graded Poincar\'{e}  duality  algebra $A$  weakly equivalent to $\Omega^\bullet(M)$. A Poincar\'{e}  duality algebra is a graded version of  a commutative Frobenius algebra. Hence, the Hochschild complex is isomorphic to $HH_*(A^{-*}, A^{-*})$ and string operations on $H^{-*}(LM)$ correspond to operations on the Hochschild homology of $HH_*(A^{-*}, A^{-*})$.

In \cite{gore09} Goresky and Hingston investigate a product on the relative cohomology $H^*(LM, M)$ that is an operation which is not part of the HCFT mentioned above. We define a product on the Hochschild homology of commutative Frobenius dg-algebras and show that it is part of a shifted BV-structure. Such a product also occurs in \cite[Section 7]{abba13} and \cite[Section 6]{abba13b}. Simultaneously with the aforementioned paper we conjecture that this product is the operation corresponding to the Goresky-Hingston product under the above isomorphism (see Conjecture \ref{conj:GH}).

Since commutative Frobenius algebras are in particular symmetric, we want our complex to recover all operations known for symmetric Frobenius algebras. In \cite{trad06} Tradler and Zeinalian show that a certain chain complex of Sullivan chord diagrams acts on the Hochschild cochain complex of a symmetric Frobenius algebra (a dual construction on the Hochschild chains was done by Wahl and Westerland in \cite{wahl11}). In \cite[Theorem 3.8]{wahl12} this complex is shown to give all formal operations for symmetric Frobenius algebras up to a split quasi-isomorphism.

On the other hand, every commutative Frobenius dg-algebra is of course a differential graded commutative algebra. In \cite{kla13} we give a description of the homology of all formal operations for differential graded commutative algebras in terms of Loday's shuffle operations (defined in \cite{loda89}) and the Connes' boundary operator. Well-known operations which are covered in this complex are Loday's $\lambda$--operations and the shuffle product $C_*(A,A) \otimes C_*(A,A) \to C_*(A,A)$. 

The chain complex of operations on commutative Frobenius dg-algebras constructed in this paper recovers all the operations just mentioned: The shifted BV-structure, the operations coming from Sullivan diagrams and the more classical operations on the Hochschild chains of commutative algebras. 
In addition, this complex provides a large class of other non-trivial  operations and can be used to compute relations between the previously known operations. 

\medskip

We now present our results in more detail. The main new object introduced in this paper is what we call a \emph{looped diagram}. A looped diagram of type $\oc{n_2}{(m_1+m_2, n_1)}$ is a pair $(\Gamma,\cC)$ where $\Gamma$ can be described as an equivalence class of one-dimensional  cell complexes (a ``commutative Sullivan diagram'') built from $n_2$ circles by attaching chords with $n_1+m_1+m_2$ marked points on the circles and $\cC$ is a collection of $n_1$ loops on $\Gamma$ starting at the marked points labeled $1$ to $n_1$. An example of a $\oc{1}{(2, 2)}$--looped diagram is given in Figure \ref{fig:sullivan1}.

\begin{figure}[!ht]

 \center
\input{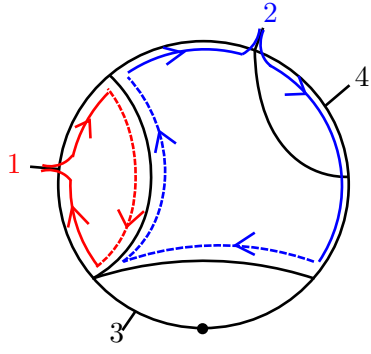}

	\caption[A looped Sullivan diagram]{A $\oc{1}{(2, 2)}$--looped diagram}
		\label{fig:sullivan1}
\end{figure}
The set of looped diagrams forms a multi-simplicial set with the boundary maps given by identifying neighbored marked points on the circles and taking the induced loops. The corresponding reduced chain complex defines $\lDa(\oc{n_1}{m_1},\oc{n_2}{m_2})$, the chain complex of looped diagrams.
Inside $\lDa(\oc{n_1}{m_1},\oc{n_2}{m_2})$, we have a subcomplex $\lDa_+(\oc{n_1}{m_1},\oc{n_2}{m_2})$ of diagrams with an open boundary condition which we use to construct operations on commutative cocommutative open Frobenius dg-algebras (commutative Frobenius algebras without a counit). In both complexes we can compose elements, i.e. we in fact construct dg-categories $\lDa$ and $\lDa_+$ with objects $\N \times \N$ and morphism spaces $\lDa(\oc{n_1}{m_1},\oc{n_2}{m_2})$ and $\lDa_+(\oc{n_1}{m_1},\oc{n_2}{m_2})$ and show:
%
\begin{Th}[{see Theorem \ref{Th:buildop}}]\label{Th:intro1}
For any commutative Frobenius dg-algebra $A$  there is a map of chain complexes
\[C_*(A,A)^{\otimes n_1} \otimes A^{\otimes m_1} \otimes \lDa(\oc{n_1}{m_1},\oc{n_2}{m_2}) \to C_*(A,A)^{\otimes n_2} \otimes A^{\otimes m_2}\]
natural in $A$ and commuting with the composition in $\lDa$. 

For any commutative, cocommutative open Frobenius dg-algebra $A$, we have a chain map
\[C_*(A,A)^{\otimes n_1} \otimes A^{\otimes m_1} \otimes \lDa_+(\oc{n_1}{m_1},\oc{n_2}{m_2}) \to C_*(A,A)^{\otimes n_2} \otimes A^{\otimes m_2}\]
natural in $A$ and preserving the composition of $\lDa$.
Moreover, all these operations are formal operations in the sense of  \cite[Section 2]{wahl12}.
\end{Th}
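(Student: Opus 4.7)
The plan is to define the action of a looped diagram directly in terms of the Frobenius structure maps of $A$, then check separately well-definedness on equivalence classes, the chain-map property, naturality, composition preservation, and formality. Given a representative $(\Gamma, \cC)$ of a diagram of type $\oc{n_2}{(m_1+m_2, n_1)}$, I would spread the tensor factors of the $n_1$ input Hochschild chains along the $n_1$ loops in $\cC$ (one loop per chain, distributed according to the marked points it passes through), place the $m_1$ input algebra factors at the marked points labeled as inputs, interpret each chord as a Frobenius coproduct followed by a multiplication, read off the $n_2$ output Hochschild chains from the cyclic orders of marked points on the $n_2$ circles of $\Gamma$, and read off the $m_2$ output factors from the output-labeled marked points. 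Any boundary component of $\Gamma$ carrying no label and no loop is closed off with the counit $\varepsilon$.

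The first and most delicate verification is that this recipe descends to the equivalence class of $\Gamma$; this is where the full package of commutative Frobenius axioms enters: associativity and commutativity of $\mu$, coassociativity and cocommutativity of $\Delta$, the unit and counit axioms, and the Frobenius compatibility. Second, I would check that the resulting assignment is a chain map. Up to signs, the multi-simplicial face map identifying two neighboring marked points on a circle translates to multiplying the two adjacent algebra factors via $\mu$, and summing these contributions together with the internal differential of $A$ reproduces exactly the Hochschild differential $b$ on each $C_*(A,A)$ factor in the source.

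Naturality in $A$ is essentially immediate, since morphisms of commutative Frobenius dg-algebras commute with $\mu, \Delta, \eta, \varepsilon$. Compatibility with composition in $\lDa$ follows by matching the combinatorial gluing of two diagrams (attaching output circles and marked points of the first to input loops and marked points of the second) with algebraic composition of the corresponding operations; this reduces to the observation that assembling a Hochschild chain from the marked points of a circle and then redistributing it along a loop of a second diagram gives the same composite as gluing the two diagrams first and then running the construction. For the second assertion, applying the same recipe to the subcomplex $\lDa_+$, which is designed so that no boundary component is ever forced to be closed off, yields an action on the Hochschild chains of commutative cocommutative open Frobenius dg-algebras, where no counit is available.

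Formality in the sense of \cite[Section 2]{wahl12} is essentially built in: each generator of $\lDa(\oc{n_1}{m_1},\oc{n_2}{m_2})$ is sent to an $A$-independent finite word in $\mu, \Delta, \eta, \varepsilon$ tensored with the input-and-output wiring, which is exactly what it means to be a formal operation. The main obstacle I expect is the first step, well-definedness on equivalence classes: one has to enumerate the generating combinatorial moves on looped diagrams, describe their effect on the loops $\cC$, and match each of them to an identity in the commutative Frobenius structure, all while keeping consistent track of orientations, basepoints and signs. This bookkeeping, rather than any single ingenious identity, is where the substantive work of the proof lies.
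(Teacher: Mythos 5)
Your construction is a genuinely different route from the paper's. The paper does not define the action element-by-element and then verify invariance under the combinatorial moves; instead it leans entirely on the formal-operations machinery of \cite[Section 2]{wahl12}: by Lemma \ref{le:natlooped} and \cite[Theorem 2.1]{wahl12} the complex $\bNat_{cFr}(\oc{n_1}{m_1},\oc{n_2}{m_2})$ is identified with $\prod_{j_1,\dots,j_{n_1}}\lDa(\oc{0}{j_1+\cdots+j_{n_1}+m_1},\oc{n_2}{m_2})[n_1-\Sigma j_i]$, and the operation attached to a looped diagram $G$ is simply the family $(G\circ(l_{j_1}\amalg\cdots\amalg l_{j_{n_1}}\amalg\id_{m_1}))_{j_1,\dots,j_{n_1}}$. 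With that packaging, the chain-map property, naturality and formality are automatic (they are what membership in $\bNat_{cFr}$ means), and the only things to check are that identities and composition are preserved, which is short because the composition $\circ$ in $\lDa$ was engineered for exactly this. The hard combinatorial work you correctly anticipate (invariance under the moves, compatibility of face maps with the Hochschild differential, sign bookkeeping) is not avoided in the paper — it is concentrated in the earlier proof that $\circ$ is a well-defined chain map on $\lDa$ — so your plan redistributes rather than duplicates that labour. Your element-level recipe does agree with the paper's (it is spelled out around Figure \ref{fig:getelement}), so for producing natural operations on algebras your route is viable.

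There is, however, one genuine gap: your treatment of formality. You write that being sent to an $A$-independent word in $\mu,\Delta,\eta,\varepsilon$ ``is exactly what it means to be a formal operation.'' It is not. That is the definition of an element of $\Nat^{\otimes}_{cFr}$, i.e.\ a transformation natural in all strict symmetric monoidal functors $A^{\otimes -}$. A formal operation is a transformation $C^{(n_1,m_1)}(\Phi)\to C^{(n_2,m_2)}(\Phi)$ natural in \emph{all} dg-functors $\Phi\colon cFr\to\Ch$, and the restriction map $\rho\colon\Nat_{cFr}\to\Nat^{\otimes}_{cFr}$ is not known to be injective or surjective, so formality is strictly more than your recipe delivers. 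To close this gap you must exhibit your operation as an element of $D^{n_1}\bC^{n_2}(cFr(-,-))(m_2)(m_1)$, i.e.\ record, for each Hochschild multidegree $(j_1,\dots,j_{n_1})$, the element of $\lDa(\oc{0}{j_1+\cdots+j_{n_1}+m_1},\oc{n_2}{m_2})$ that your recipe implicitly uses before evaluating on $A$ — which is precisely the composite $G\circ(l_{j_1}\amalg\cdots\amalg l_{j_{n_1}}\amalg\id_{m_1})$. At that point you have reconstructed the paper's proof, and the formality claim becomes a tautology rather than an afterthought.
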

Moving on, we enlarge  $\lDa(\oc{n_1}{m_1},\oc{n_2}{m_2})$ to a complex $\ilDa(\oc{n_1}{m_1},\oc{n_2}{m_2})$, where we take products over specific types of diagrams. In this complex not all elements are composable, but we can show:
\begin{Th}[{see Theorem \ref{Th:buildop2}}] \label{Th:intro2}
For $A$ a commutative Frobenius dg-algebra there is a map of chain complexes
\[C_*(A,A)^{\otimes n_1} \otimes A^{\otimes m_1} \otimes \ilDa(\oc{n_1}{m_1},\oc{n_2}{m_2}) \to C_*(A,A)^{\otimes n_2} \otimes A^{\otimes m_2}\]
natural in $A$ and commuting with the composition of composable elements in $\ilDa$. Again, for $A$ a commutative, cocommutative open Frobenius dg-algebra, we have a chain map
\[C_*(A,A)^{\otimes n_1} \otimes A^{\otimes m_1} \otimes \ilDa_+(\oc{n_1}{m_1},\oc{n_2}{m_2}) \to C_*(A,A)^{\otimes n_2} \otimes A^{\otimes m_2}\]
natural in $A$.
\end{Th}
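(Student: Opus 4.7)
\medskip\noindent\emph{Proof proposal.} The plan is to bootstrap directly from Theorem~\ref{Th:intro1}, extending the action termwise to the product completion $\ilDa$. The essential input is a finiteness observation guaranteeing that, for any fixed argument in $C_*(A,A)^{\otimes n_1} \otimes A^{\otimes m_1}$, only finitely many summands of a product element contribute. Concretely, I would first record that the action of a single looped diagram $\Gamma$ on a simplicial tuple $x = x_1 \otimes \cdots \otimes x_{n_1} \otimes a_1 \otimes \cdots \otimes a_{m_1}$, with $x_i$ of fixed simplicial degree $k_i$, vanishes unless the $i$-th circle of $\Gamma$ carries exactly the prescribed number of marked points determined by $k_i$ (together with the positions of the loop basepoints). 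This is already implicit in the construction of the action from Theorem~\ref{Th:intro1}, which proceeds by pairing each marked point on the $i$-th circle with a tensor factor of $x_i$.

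Given this, define the action of a product element $\prod_\alpha \Gamma_\alpha \in \ilDa(\oc{n_1}{m_1},\oc{n_2}{m_2})$ on $x$ by the formula $\sum_\alpha x \cdot \Gamma_\alpha$, where the right-hand side is a well-defined finite sum provided that in each combinatorial pattern of circle-arities only finitely many diagrams occur among the chosen ``specific types'' over which the product is taken. The verification that this is the case is the content that must be pulled from the earlier construction of $\ilDa$ in the body of the paper: the product completions are engineered exactly so that the arity-graded pieces remain finitely supported. Once this is in place, chain map property, naturality in $A$, and compatibility with composition of composable elements follow term-by-term from Theorem~\ref{Th:intro1}, since the differential and composition on $\ilDa$ are induced componentwise from those of $\lDa$ on each product factor, and the finiteness argument ensures that the relevant sums can be interchanged with $d$ and with evaluation on inputs.

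The open boundary variant $\ilDa_+$ is handled by the identical scheme: the action from Theorem~\ref{Th:intro1} on $\lDa_+$ extends termwise to products, and the same arity-matching observation forces any fixed input to see only finitely many contributing diagrams.

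The main obstacle is the finiteness verification: one has to check that the choice of ``specific types of diagrams'' made when defining $\ilDa$ is such that, after fixing a simplicial degree on each Hochschild factor, only finitely many diagrams of that circle-arity pattern appear in each product block. Everything else is formal, and the assumption on restricting to composable elements in the composition statement is exactly what one needs to avoid infinite sums appearing in the target of a composed operation.
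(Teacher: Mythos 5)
Your overall architecture matches the paper's: the action of a product element is defined termwise, and everything reduces to a finiteness statement guaranteeing that on any fixed input only finitely many product components act nontrivially, after which the chain-map property, naturality, and compatibility with composition of composable pairs follow componentwise from Theorem~\ref{Th:intro1}. However, the one nontrivial step — the finiteness verification — is exactly the step you defer ("the content that must be pulled from the earlier construction"), and the mechanism you sketch for it is not correct as stated. The circles (white vertices) of a looped diagram are the \emph{outputs} of the operation, indexed by $n_2$; the inputs are governed by the $n_1$ loops. The action on a Hochschild multidegree $(j_1,\ldots,j_{n_1})$ is obtained by gluing the leaves of $l_{j_1}\amalg\cdots\amalg l_{j_{n_1}}$ along the loops in all order-preserving ways, so there is no "exact matching of marked points" condition; rather, the vanishing is governed by an inequality. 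The correct statement, which is what the paper proves, is: the product defining $\ilDa$ is indexed by the \emph{type} $(t_1,\ldots,t_{n_1})$ recording the number of irreducible constituents of each loop, and a component of type $(t_1,\ldots,t_{n_1})$ acts as zero in Hochschild multidegree $(j_1,\ldots,j_{n_1})$ unless $j_i>t_i$ for all $i$, because each of the $t_i$ irreducible loops must receive at least one of the $j_i$ glued leaves. Since for fixed $(j_1,\ldots,j_{n_1})$ only finitely many types satisfy $t_i<j_i$, and each type component $\lDa^{t_1,\ldots,t_{n_1}}(\oc{n_1}{m_1},\oc{n_2}{m_2})$ is a vector space whose elements are finite linear combinations of diagrams, the total contribution to any fixed multidegree is finite.

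With that lemma supplied, your remaining steps are sound, and they agree with the paper's treatment of composition: since every formal operation is composable, the image of a composable pair under the termwise map can be expanded as a double sum and regrouped, so $J(b\circ a)=J(b)\circ J(a)$. I would only add that your phrase "only finitely many diagrams of that circle-arity pattern appear in each product block" conflates two separate facts: each block is automatically a finite sum because it is a single vector-space element, and the real content is that only finitely many \emph{blocks} meet a given input degree, which is precisely the inequality $t_i<j_i$ above.
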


We first explain how we recover the operations known from symmetric Frobenius algebras. We denote the complexes of Sullivan diagrams by $\SD(\oc{n_1}{m_1},\oc{n_2}{m_2})$ (see Section \ref{sec:graphs} for a definition). As already mentioned, Sullivan diagrams define natural operations for symmetric Frobenius algebras, i.e. for $A$ a symmetric Frobenius dg-algebra there is an action
\[C_*(A,A)^{\otimes n_1} \otimes A^{\otimes m_1} \otimes \SD(\oc{n_1}{m_1},\oc{n_2}{m_2}) \to C_*(A,A)^{\otimes n_2} \otimes A^{\otimes m_2}\]
natural in $A$. There is a canonical way to build a looped diagram out of a Sullivan diagram $\Gamma$ and thus there is a dg-map $K:\SD(\oc{n_1}{m_1},\oc{n_2}{m_2}) \to \lDa(\oc{n_1}{m_1},\oc{n_2}{m_2})$ commuting with the composition of diagrams. 
In Proposition \ref{prop:commsym} we show that both actions (the one of $\SD$ and the one of $\lDa$) are compatible, i.e. that given a commutative Frobenius dg-algebra $A$ the diagram
\[\xymatrix{ C_*(A,A)^{\otimes n_1} \otimes A^{\otimes m_1} \otimes \SD(\oc{n_1}{m_1},\oc{n_2}{m_2})  \ar[d]_-{\id^{\otimes n_1} \otimes \id^{\otimes m_1} \otimes K} \ar[rd]& & \\C_*(A,A)^{\otimes n_1} \otimes A^{\otimes m_1} \otimes \lDa(\oc{n_1}{m_1},\oc{n_2}{m_2})  \ar[r]&C_*(A,A)^{\otimes n_2} \otimes A^{\otimes m_2}  }\]
commutes.

Second, restricting to those diagrams which only have leaves and no chords glued in, we define a subcomplex $\iplDa_{\Com}(\oc{n_1}{m_1}, \oc{n_2}{m_2})$ of $\ilDa_+(\oc{n_1}{m_1}, \oc{n_2}{m_2})$. This subcomplex defines operations for commutative algebras and contains Loday's lambda operations. 
More precisely, by \cite[Theorem 3.4]{kla13} it gives all formal operations of differential graded commutative algebras up to quasi-isomorphism. So in particular the complex of looped diagrams includes (up to quasi-isomorphism) all formal operations on the Hochschild chains of differential graded commutative algebras.

Last, we define another subcomplex $\plD_{cact}(n_1, n_2) \subset \lDa_+(\oc{n_1}{0}, \oc{n_2}{0})$ with $\plD_{cact}(n_1, n_2)$ the PROP coming from an operad $\plD_{cact}(n, 1)$. We show that a topological version of this operad is homeomorphic to a topologically desuspended Cacti operad and hence deduce:
\begin{Th}[{see Theorem \ref{Th:shiftbv}}]\label{Th:C}
The complex $\plD_{cact}(n, 1)$ is a chain model for the twisted operadic desuspended BV-operad,
i.e. \[H_*(\plD_{cact}(-,1)) \cong \widetilde{s}^{-1}BV\]
as graded operads.
\end{Th}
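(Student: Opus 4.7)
The plan is to route the homology computation through a topological intermediate, rather than trying to pin down the homology of $\plD_{cact}(-,1)$ directly. The starting point is to realise each looped diagram of cactus type as a cell of a topological operad $\plD_{cact}^{top}(n,1)$, where the parameters of the cell are the lengths of the arcs between consecutive marked points on each circle. Under this realisation, the simplicial face maps (collapsing an arc between neighbouring marked points) correspond to the standard boundary identifications in a CW-structure, so the cellular chain complex of $\plD_{cact}^{top}(n,1)$ recovers $\plD_{cact}(n,1)$ up to an overall degree shift accounting for the ``$>0$'' normalisation used in the reduced chain complex.

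With this topological model in hand, the next step is to produce an explicit operad homeomorphism between $\plD_{cact}^{top}$ and the (topologically) desuspended Cacti operad of Voronov. A looped diagram of the cactus subclass consists of $n$ labelled circles attached along chords in a tree-like configuration together with a loop on the resulting complex; exactly the data of a cactus with $n$ lobes plus the choice of a basepoint/travelling loop. Sending arc-lengths to arc-lengths and matching the basepoint of the cactus to the starting marked point of the loop gives a candidate bijection; it is then a matter of verifying that it is a homeomorphism on each cell and that it intertwines operadic composition (gluing of cacti at the basepoint versus gluing along loops in the looped-diagram formalism).

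Once the homeomorphism is established, Theorem C follows by invoking Voronov's (and Kaufmann's) classical theorem that $H_*(\mathrm{Cacti}) \cong BV$ as graded operads, then tracking the desuspension and the twist: the degree shift incurred by the cellular-chains identification is the single operadic desuspension $s^{-1}$, and the sign representation arising from the ordering of the marked points around each circle is precisely what produces the tilde (twist) in $\widetilde{s}^{-1}BV$.

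The main obstacle will be the middle step: proving that the candidate map is an operad homeomorphism, and in particular that operadic composition is respected on the nose. Operadic composition for cacti requires a reparametrisation that smashes the output cactus into a lobe of the input; on the looped-diagram side the composition is defined combinatorially by inserting a diagram at a marked point and reconciling loops. Checking that these two a priori different recipes agree, cell by cell, and that the induced signs match the twist, is the delicate piece of the argument. The degree-shift bookkeeping (one desuspension per output, combined with the twist from the cyclic orderings) must also be carried out carefully, since an off-by-one shift here would change the statement of the theorem.
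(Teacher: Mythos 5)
Your overall strategy (realize the diagrams topologically, identify the result with a version of the Cacti operad, and quote $H_*(Cacti)\cong BV$) is the same as the paper's, but your account of where the twisted desuspension comes from is wrong, and this is not a bookkeeping issue — it is the heart of the theorem. Normalized chains of a simplicial set compute the cellular chains of its realization with \emph{no} degree shift, so the ``$>0$ normalisation'' cannot produce the operadic desuspension $s^{-1}$; likewise the twist in $\dest$ is the sign representation of $\Sigma_n$ permuting the $n$ \emph{lobes}, not a sign coming from the ordering of marked points on a single circle. In the paper both the shift and the twist arise from a step your proposal omits entirely: the complex $\plD_{cact}(n,1)$ is the split complement of the partly constant diagrams $\plDc_{cact}(n,1)$ inside $\plDa_{cact}(n,1)$, so its homology is the \emph{relative} homology $H_*\bigl(|\plDa_{cact}(n,1)_\bullet|,|\plDc_{cact}(n,1)_\bullet|\bigr)$, i.e.\ the reduced homology of the quotient space. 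That quotient is the one-point compactification $Cacti(n)^c$ of the \emph{open} Cacti operad $Cacti(n)\cong Cacti^1(n)\times\mathring{\Delta}^{n-1}$, where the open simplex records the circumferences of the lobes. Compactifying turns $\mathring{\Delta}^{n-1}$ into the sphere operad $\Sph(n)$, and $\widetilde{H}_*(\Sph)=\dest$ is exactly the twisted desuspension — one degree per extra lobe, with the sign of the permutation of the lobes. Without recognizing the partly constant locus and the compactification, your candidate ``topological operad whose cells are the looped diagrams'' is a compact complex whose cellular chains give $\plDa_{cact}$, not $\plD_{cact}$, and no homeomorphism to a desuspended Cacti operad in your sense exists.

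A second, smaller gap: even after the identification $Cacti^c\cong\Sph\ \overset{\ltimes}{\wedge}\ Cacti^1_+$, the smash is \emph{twisted} (composition rescales lobes by the circumference data), and $Cacti^1$ is only a quasi-operad. The paper resolves this by Kaufmann's decomposition $Cacti\cong\D\ltimes Cacti^1$ followed by an explicit homotopy (the Moore-loop trick) straightening the twisted smash to the genuine smash $\Sph\wedge Cacti^1_+$, after which the K\"unneth map gives $\dest\otimes BV$. You correctly flag that matching the two composition recipes is the delicate point, but the mechanism that makes it work — semidirect product, one-point compactification converting $\times$ to $\wedge$, and the straightening homotopy of quasi-operads — needs to be supplied rather than asserted.
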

Here $\widetilde{s}^{-1}$ denotes a desuspension with twisted sign. 
The sign twist comes from the fact that we actually work with topological operads and suspend topologically by smashing with the sphere operad (see Definition \ref{def:susp}).
As a corollary of the above theorem we can deduce:
\begin{Co}[{see Corollary \ref{Cor:action}}]\label{Cor:D}
There is a desuspended BV-algebra structure on the Hochschild homology of a commutative cocommutative open Frobenius dg-algebra (in particular on the Hochschild homology of a commutative Frobenius dg-algebra) which comes from an action of a chain model of the suspended Cacti operad on the Hochschild chains.
\end{Co}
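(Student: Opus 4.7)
The plan is to combine Theorem \ref{Th:intro1} with Theorem \ref{Th:C} in a straightforward way. Given a commutative, cocommutative open Frobenius dg-algebra $A$ (in particular any commutative Frobenius dg-algebra, with the counit ignored), Theorem \ref{Th:intro1} provides a chain map
\[C_*(A,A)^{\otimes n} \otimes \lDa_+\bigl(\oc{n}{0},\oc{1}{0}\bigr) \longrightarrow C_*(A,A)\]
natural in $A$ and compatible with the composition in the dg-category $\lDa_+$. First I would restrict this action along the inclusion of subcomplexes
\[\plD_{cact}(n,1) \hookrightarrow \lDa_+\bigl(\oc{n}{0},\oc{1}{0}\bigr)\]
to obtain, for each $n$, a chain map $C_*(A,A)^{\otimes n} \otimes \plD_{cact}(n,1) \to C_*(A,A)$. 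Since $\plD_{cact}(n,1)$ is by construction the arity-$n$ part of an operad $\plD_{cact}$ sitting inside $\lDa_+$ (the operad composition is just the restriction of the dg-categorical composition), and since the action of $\lDa_+$ respects that composition, the collection of restricted maps assembles into an action of the dg-operad $\plD_{cact}$ on $C_*(A,A)$.

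Next I would pass to homology. Because the action is a chain map and composition-preserving on the nose, it descends to an action of the graded operad $H_*(\plD_{cact}(-,1))$ on $HH_*(A,A)$. Theorem \ref{Th:C} identifies this graded operad with the twisted desuspended BV-operad $\widetilde{s}^{-1}BV$, so the result is precisely a $\widetilde{s}^{-1}BV$-action on $HH_*(A,A)$, i.e.\ a desuspended BV-algebra structure. Moreover, by construction this structure is the shadow on homology of an honest chain-level action of the topological model of the suspended Cacti operad whose singular chains compute $\plD_{cact}(-,1)$ (cf.\ Definition \ref{def:susp}), which is exactly the second clause of the corollary.

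Finally I would check the restriction clause to commutative Frobenius dg-algebras: any such algebra is in particular commutative, cocommutative and open Frobenius (forgetting the counit), so Theorem \ref{Th:intro1} applies in its open-Frobenius form and the above construction specializes without change.

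The only subtle point, and the main thing one has to verify, is that the inclusion $\plD_{cact} \hookrightarrow \lDa_+$ really is compatible with the composition used in Theorem \ref{Th:intro1}, i.e.\ that the operad structure used to state Theorem \ref{Th:C} coincides with the one induced from the dg-category $\lDa_+$. This is bookkeeping rather than a genuine obstacle, but it is the step that licences transferring the ``chain model'' statement from Theorem \ref{Th:C} into an actual action on $C_*(A,A)$; everything else is formal.
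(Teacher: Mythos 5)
Your proposal is correct and follows essentially the same route the paper intends: the corollary is obtained by restricting the $\lDa_+$-action of Theorem \ref{Th:buildop} to the suboperad $\plD_{cact}(-,1)\subset\lD_+(\oc{-}{0},\oc{1}{0})$ (whose operad composition is by construction the restriction of the dg-categorical composition) and then passing to homology via Theorem \ref{Th:shiftbv}. The only cosmetic difference is that the chain model is identified with the cellular chains of $|\plDa_{cact}(-,1)_\bullet|/|\plDc_{cact}(-,1)_\bullet|\cong Cacti^c$ rather than singular chains, but this does not affect the argument.
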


\medskip

The paper is organized as follows: The combinatorics used in the paper are given in Section \ref{sec:combi}. We start with recalling the definitions of black and white graphs and Sullivan diagrams in Section \ref{sec:graphs}. In Section \ref{sec:comsul} we define looped diagrams and show that $\lDa$ and $\lDa_+$ are well-defined dg-categories. The two following sections \ref{sec:nonconst} and \ref{sec:type} are very technical and not needed for the actual construction of operations (they will be used to construct the commutative operations and the action of the Cacti operad). We suggest the reader to skip them and come back later, if needed. More precisely, in Section \ref{sec:nonconst} we prove that the subcomplex of diagrams with a constant loop is a split subcomplex and investigate how the composition looks like on the split complement of non-constant diagrams. In Section \ref{sec:type} we give a finer subdivision of $\lD$ on the level of vector spaces and afterward take the product over all the subspaces to get the complexes $\ilDa(\oc{n_1}{m_1}, \oc{n_2}{m_2})$ and $\ilDa_+(\oc{n_1}{m_1}, \oc{n_2}{m_2})$. Composition is not well-defined on these complexes, but we give some subcomplexes for which composition with every other element is well-defined.
Section \ref{sec:natcom} deals with the formal operations on the Hochschild chains of commutative Frobenius dg-algebras. We start by recalling the definition of Frobenius algebras in Section \ref{sec:comfrob} and the definition and results on formal operations in Section \ref{sec:formal}. In Section \ref{sec:build} we explain how to build formal operations out of looped diagrams, i.e. prove Theorem \ref{Th:intro1} and Theorem \ref{Th:intro2}. In Section \ref{sec:con} we investigate the connection to the operations on symmetric Frobenius algebras stated above. Finally, in Section \ref{sec:exop} we show how the shuffle product, the Chas-Sullivan coproduct, the BV-operator and the shifted commutative product defined in \cite[Section 7]{abba13} and \cite[Section 6]{abba13b} look like in terms of looped diagrams and use the techniques of looped diagrams to prove a relation between the new product and the BV-operator.
In Section \ref{sec:com} we define the subcomplexes of graphs giving the operations of commutative algebras and recall \cite[Theorem 3.4]{kla13} in terms of these diagrams. In Section \ref{sec:cacti}  we define the complex $\plD_{cact}(n_1, n_2)$, prove Theorem \ref{Th:C} and thus obtain the action of a desuspended cacti operad on the Hochschild chains of a commutative Frobenius dg-algebra (see Corollary \ref{Cor:D}).

In Appendix \ref{sec:overview} we have listed all complexes defined in the paper together with a short explanation and reference. We hope that this is helpful to keep track of the definitions throughout the paper.

\subsection*{Acknowledgements}
I would like  to thank my advisor Nathalie Wahl for many helpful discussions, questions and comments. I would also like to thank Daniela Egas Santander for fruitful discussions.
The author was supported by the Danish National Research Foundation through the Centre for Symmetry and Deformation (DNRF92).

\tableofcontents
\subsection*{Conventions}
If not specified otherwise we work in the category $\Ch$ of chain complexes over $\Z$. We use the usual sign convention on the tensor product, i.e. the differential $d_{V \otimes W}$ on $V \otimes W$ is given by $d_{V \otimes W}(v \otimes w)=d_V(v) \otimes w + (-1)^{|v|}v \otimes d_W (w)$.

A \emph{dg-category} $\e$ is a category enriched over chain complexes, i.e. the morphism sets are chain complexes. We use composition from the right, i.e. we require the composition maps $\e(m,n) \otimes \e(n,p) \to \e(m,p)$ to be chain maps. A \emph{dg-functor} is an enriched functor $\Phi: \e \to \Ch$, so the structure maps
$\Phi(m) \otimes \e(m,n) \to \Phi(n)$
are chain maps.

Given a chain complex $A$ we denote by $A[k]$ the shifted complex with $(A[k])_n=A_{n-k}$. Throughout the paper the natural numbers are always assumed to include zero.

\section{Definitions of graph complexes}\label{sec:combi}
In this section we define the chain complex of looped diagrams and its subcomplex of positive diagrams. The complexes are an extension of a quotient of the chain complex of Sullivan diagrams which we first recall. We mainly follow \cite[Section 2]{wahl11}. 
\subsection{Graphs}\label{sec:graphs}

A \emph{graph} is a tuple $(V,H,s,i)$ with $V$ the vertices, $H$ the half-edges, $s:H \to V$ the source map and $i:H \to H$ an involution. A half-edge is a \emph{leaf} if it is a fixed point under $i$. A \emph{fat graph} is a graph with a cyclic ordering of the half-edges at the vertices. The cyclic orderings define \emph{boundary cycles} on the graph which correspond to the boundary cycles of the surface one gets by thickening the graph (for more details see \cite[Section 2.1]{wahl11}).

In the graphical representation the half-edges are glued to the vertices using $s$ and to each other using $i$. 

An \emph{orientation} of a graph is a unit vector in $\operatorname{det}(\R(V \amalg H))$. Note that any odd-valent fat graph has a canonical orientation and that an orientation of a fat graph with even-valent vertices is given by an ordering of the (even-valent) vertices together with a choice of a start half-edge $h_1^i$ for each (even-valent) vertex (changing the position of the odd-valent vertices in the ordering or changing the choice of their start half-edge does not change the orientation). Denoting the half-edges belonging to a vertex $v_i$ by $h_j^i$ starting from the start half-edge and following the cyclic ordering, the canonical orientation is given by
\[v_1 \wedge h_1^1 \wedge \cdots h_{n_1}^1 \wedge v_2 \wedge \ldots \wedge h_{n_r}^r.\]

A \emph{black and white graph} is an oriented fat graph where we label the vertices black or white and allow the white vertices to have any positive valence, whereas the black vertices are requested to have valence at least three. The white vertices are ordered and each white vertex is equipped with a choice of a start half-edge. A \emph{$\oc{p}{m}$--graph} is a black and white graph with $p$ white vertices and $m$ labeled leaves, quotiening out the equivalence relation of forgetting unlabeled leaves which are not the start half-edge of a white vertex. In the graphical representation we mark the start half-edges by black blocks. An example of a $\oc{3}{2}$--graph is given in Figure \ref{fig:BW2}. A special example of a $\oc{1}{n}$--graph is the graph $l_n$ which will play a crucial role in defining operations. This graph is given by attaching $n$ leaves to the white vertex and labeling them starting from the start half-edge (for an example see Figure \ref{fig:ln}). In general we omit the label $v_1$ in the pictures if there is only one white vertex.

 \begin{figure}[ht!]
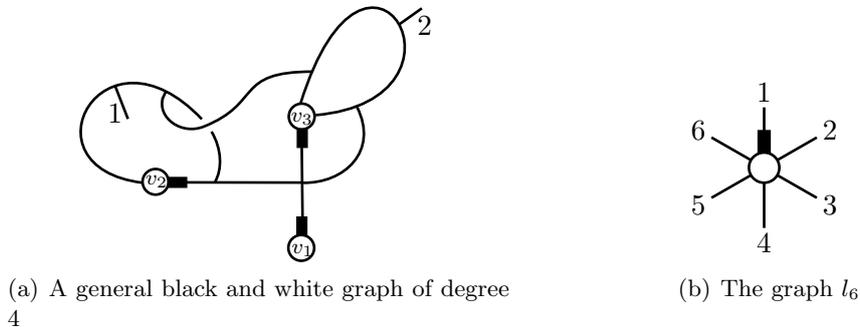

 
 \vspace{-1cm}
     \begin{center}
        \subfigure
        [A general black and white graph of degree $4$]
        {%
            \label{fig:BW2}
            \input{pic_BW2}
      
        }%
              \hspace{2cm}
        \subfigure
       [The graph $l_6$]
        {%
           \label{fig:ln}
           \input{pic_ln}
           
        }
    \end{center}
   \caption[Examples of black and white graphs]{
        A $\oc{3}{2}$--graph and the $\oc{1}{6}$--graph $l_6$}
\end{figure}

The \emph{degree} of a black vertex of valence $v_b$ is given by $v_b-3$ whereas the degree of a white vertex $v_w$ is defined as $v_w-1$. The degree of a black and white graph is the sum of the degrees over all its vertices.

The differential of a black and white graph is given by the sum of all graphs obtained by blowing up all vertices of degree at least $1$ in all possible ways, i.e. splitting the set of half-edges attached to the vertex into two subsets (respecting the cyclic ordering and with at least $2$ elements in each if the vertex was black) and adding an edge in between these. For more details on the differential and examples see \cite[Section 2.5]{wahl11}.

The chain complex of \emph{$\oc{p}{n}$--Sullivan diagrams} is defined as a quotient of the above complex of $\oc{p}{n}$--graphs by the subcomplex spanned by the graphs with at least one black vertex of valence at least $4$ and the boundaries of these graphs. Hence an element in this complex is an equivalence class of graphs with all its black vertices of valence exactly $3$ and the equivalence relation is generated by the relation shown in Figure \ref{fig:equiv}.

\begin{figure}[!ht]

 \center
\input{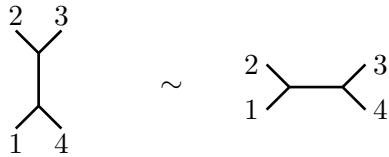}

	\caption{The equivalence relation on Sullivan diagrams}
		\label{fig:equiv}
\end{figure}

In \cite[Theorem 2.7]{wahl11} it was shown that this complex is isomorphic to the complex of \emph{Cyclic Sullivan chord diagrams} defined in \cite[Def. 2.1]{trad06}.

The dg-category $\SD$ is defined to have pairs of natural numbers $\oc{n}{m}$ as objects and morphism complexes $\SD(\oc{n_1}{m_1}, \oc{n_2}{m_2}) \subset \oc{n_1}{n_1+m_1+m_2}$--Sullivan diagrams the subcomplex of the graphs with the first $n_1$ leaves being sole labeled leaves in their boundary cycle. The composition is defined in \cite[Section 2.8]{wahl11}. 

We want to define looped diagrams as an enlargement of a quotient of these.

\subsection{Commutative Sullivan diagrams and looped diagrams}\label{sec:comsul}
\begin{Def}
A \emph{$\oc{p}{m}$--commutative Sullivan diagram} is an equivalence class of $\oc{p}{m}$--Sullivan diagrams by forgetting the ordering at the black vertices, i.e. the chain complex $\oc{p}{m}-CSD$ of $\oc{p}{m}$--commutative Sullivan diagrams is the quotient of $\oc{p}{m}$--graphs by 
\begin{itemize}
\item graphs with black vertices of valence at least 4,
\item the boundaries of such graphs and
\item reordering of the half-edges at black vertices.
\end{itemize}
\end{Def}
An example of two equivalent $\oc{1}{1}$--commutative Sullivan diagrams is given in Figure \ref{fig:equivsullivan}.
\begin{figure}[!ht]
\vspace{-1cm}

 \center
\input{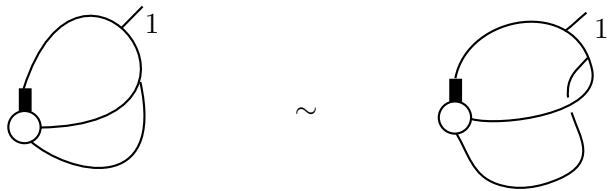}
\caption{Two equivalent commutative Sullivan diagrams}
	\label{fig:equivsullivan}
\end{figure}
\begin{Remark}
Black and white graphs were defined with an orientation. As mentioned above, any trivalent graph has a canonical orientation and so does every white vertex (starting from the start half edge). Thus every Sullivan diagram has a canonical orientation. The relation we divide out is the commutativity relation together with the canonical orientation of the two graphs. From now on, we always work with the canonical orientation. However, the orientation was also used to define the sign of the differential and the sign of the composition. We will spell out these sign explicitly, too.
\end{Remark}

In order to be able to define a composition, we need an additional structure analogous to the one we lost going from Sullivan diagrams to commutative Sullivan diagrams. We define a larger category of looped diagrams which includes all Sullivan diagrams without free boundary as a subcategory. 

Given a white vertex in a black and white graph with $k$ half-edges attached to it, the half-edges cut the circle around the white vertex into $k$ parts. Given a labeling of the white vertices $v_1, \cdots, v_p$ we label the segments at the vertex $v_i$ by $s^i_{1}, \cdots, s^i_{k_i}$ following the ordering of the half-edges at $v_i$. These segments inherit a canonical orientation from the ordering at the white vertex and hence we can talk about their start and end. By $-s_j^i$, we mean the segment with the opposite orientation, i.e. start and end got interchanged. An example of the labeling of segments for one white vertex is given in Figure \ref{fig:labelloop}. An \emph{arc component} of a black and white graph is a set of half-edges and black vertices which is path-connected with the paths in the graph not passing through a white vertex (i.e. in pictures, a connected component of the graph after ``deleting'' the white vertices). For example, the commutative Sullivan diagram in Figure \ref{fig:equivsullivan} has one arc component, whereas the underlying diagram of Figure \ref{fig:labelloop} has two arc components (the one with the labeled leaf and the one without). 
\begin{figure}[ht!]
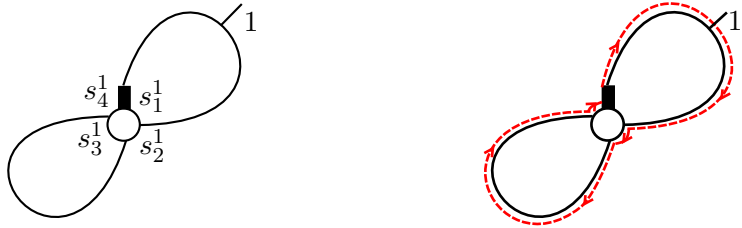

\vspace{-1cm}
     \begin{center}
        \subfigure
        [The segments around one white vertex]
        {%
            \label{fig:labelloop}
            \input{pic_segments}
        }%
        \subfigure
       [The loop $\gamma=\{s^1_2, s^1_4\}$]
        {%
           \label{fig:labelloop2}
           \input{pic_looped1}
        }
    \end{center}
   \caption{
        A commutative Sullivan diagram with segments and a loop starting from leaf $1$
     }
   \label{fig:subfigures}
\end{figure}
\begin{Def}
A \emph{loop in a $\oc{p}{m}$--commutative Sullivan diagram} from an arc component $a$ to itself is an ordered set of oriented segments of the boundary circles of the white vertices $\{\eps_1 s^{i_1}_{t_1}, \cdots, \eps_r s^{i_r}_{t_r}\}$ for $r \geq 0$,  $1 \leq i_1, \cdots, i_r \leq p$ and $\eps_i \in \{-1, 1\}$ such that the following conditions hold:
\begin{enumerate}
\item $\eps_1 s^{i_1}_{t_1}$ starts at the arc component $a$.
\item $\eps_r s^{i_r}_{t_r}$ ends at the arc component $a$.
\item $\eps_{w+1} s^{i_{w+1}}_{t_{w+1}}$ starts at the arc component at which $\eps_w s^{i_w}_{t_w}$ ends (which can be at a different white vertex!).
\item $\eps_w s^{i_w}_{t_w} \neq -\eps_{w+1} s^{i_{w+1}}_{t_{w+1}}$.
\end{enumerate}
The loop is called \emph{constant} if the set of segments is empty, i.e. if $r=0$.

A loop from a leaf $k$ to itself is a loop starting at the arc component the leaf belongs to.

The composition $\gamma_1 \ast \gamma_2$ of loops $\gamma_1$ and $\gamma_2$ both starting at the same leaf $k$ is the concatenation of these two loops (see Figure \ref{fig:juxt}).

A loop $\gamma$ starting at the leaf $k$ is called \emph{irreducible} if it cannot be written as the composition of two non-trivial loops (i.e. the loop does not return to the arc component of $k$ before it finishes).


A \emph{positively oriented} loop in a $\oc{p}{m}$--commutative Sullivan diagram is a loop such that all orientations of the boundary segments are positive (i.e. $\eps_w=1$ for all $w$). 

\end{Def}

We draw a loop from a leaf by starting at the leaf and marking the segments of the white vertex (with orientation). To keep track of their ordering, we also draw the loop through arc components (dotted) even though this is not part of the data (i.e. changing the way we walk through an arc component does not change the loop). This way, a loop in a diagram is really represented by a loop in the picture. An example is given in Figure \ref{fig:labelloop2}.

\begin{figure}[ht!]
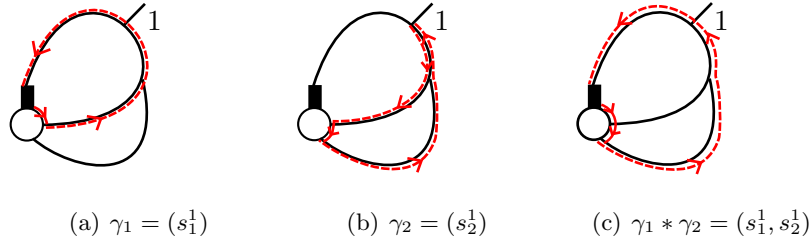

\vspace{-1cm}  \begin{center}
 \subfigure[center]
        [$\gamma_1=(s^1_1)$]
        {%
          
            \input{pic_gamma1}
        }%
\subfigure
        [$\gamma_2=(s^1_2)$]
        {
            
            \input{pic_gamma2}

        } \nobreak
         \subfigure
        [$\gamma_1 \ast \gamma_2=(s^1_1, s^1_2)$]
        {%
           
            \input{pic_juxt2}
        }%
    \end{center}
   \caption{
        Juxtaposition of loops
     }
   \label{fig:juxt}
\end{figure}

\begin{Def}
A \emph{$\oc{p}{(m,n)}$--looped diagram} is a $\oc{p}{n+m}$--commutative Sullivan diagram with $n$ loops such that the $i$-th loop starts at the $i$-th labeled leaf.
\end{Def}
An example of a $\oc{1}{(0,2)}$--looped diagram and an example of a $\oc{3}{(0,2)}$--looped diagram are given in Figure \ref{fig:loopeddiagrams}. To indicate which path starts at which leaf we color the label of the leaf with the same color as the corresponding loop.
\begin{figure}[ht!]
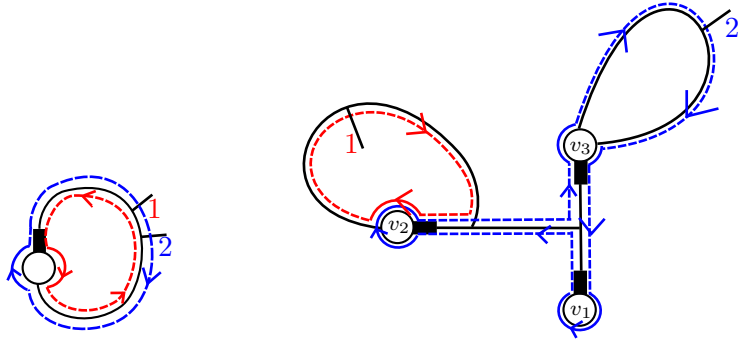

     \begin{center}
\vspace{-1cm} 
        \subfigure
        {%
            
            \input{pic_loopeddiagram1}
        }%
        \subfigure
        {%
           \input{pic_loopeddiagram2}
        }
    \end{center}
   \caption{
       Two looped diagrams
     }
   \label{fig:loopeddiagrams}
\end{figure}

The decomposition of the underlying commutative Sullivan diagram into connected components gives a decomposition of the looped diagram into connected components, since every loop has to stay in a connected component.

We want to make a complex out of these diagrams and thus need to define a differential. The differential is defined just as for Sullivan diagrams, where we blow up every possible pair of neighbored vertices at the white vertex with alternating sign (this is equivalent to the sign given by orientations, cf. \cite[Section 2.10]{wahl11}). 

For a $\oc{p}{(m,n)}$--looped diagram $(\Gamma, \gamma_1, \cdots, \gamma_n)$ the $i$-th blow up at the white vertex $v_k$ is the $\oc{p}{(m,n)}$--looped diagram $(\Gamma', \gamma'_1, \cdots, \gamma'_n)$ where $\Gamma'$ is the blow up of $\Gamma$ (as explained in the end of Section \ref{sec:graphs}) and the $\gamma'_l$ are given by the $\gamma_l$ after forgetting the segment $s^k_i$ whenever it was part of the loop and relabeling all $s^k_j$ for $j>i$.
We define the differential $d$ to be the sum of all blow ups with the sign at each vertex $v_i$ alternating and starting with $(-1)^{1+\sum_{j<i} |v_j|}$. Examples are given in Figure \ref{fig:diff} and Figure \ref{fig:multidiff}.
\begin{figure}[!ht]
 \center
\input{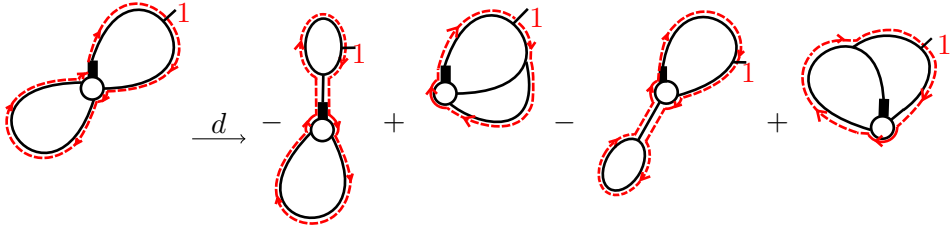}
	\caption{The differential of a looped diagram (with one loop)}
		\label{fig:diff}
\end{figure}

\begin{Lemma}\label{le:chcplx}
The map $d$ defines a differential on $\oc{p}{(m,n)}$--looped diagrams. The chain complex of these diagrams is called $\oc{p}{(m,n)}$--lD.
\end{Lemma}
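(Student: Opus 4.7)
The plan is to reduce the claim to the known fact that the analogous differential on commutative Sullivan diagrams (without loop data) squares to zero, and then check compatibility with the loop structure.

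First, I would verify that the operation of blowing up is well-defined on looped diagrams. For the underlying commutative Sullivan diagram, the $i$-th blow-up at white vertex $v_k$ is the standard operation from Section \ref{sec:graphs}, replacing the white vertex of valence $v_w$ by a white vertex of valence $v_w-1$ together with a new trivalent black vertex. For the loops, deletion of the segment $s^k_i$ from each sequence $\gamma_l$ obviously preserves conditions (1)--(3): the arc component at the start and end of the loop is unchanged, and where the loop previously crossed from one arc component to another via $s^k_i$, these two arc components are now merged through the new black vertex. The only subtle point is condition~(4) (no immediate backtracking). One has to inspect the pattern $(\dots,\epsilon s^{i_w}_{t_w}, s^k_i, -\epsilon s^{i_w}_{t_w},\dots)$ (or its negative) and either check that such a pattern is actually inconsistent with $(\Gamma,\cC)$ being a looped diagram on $\Gamma$, or adopt the convention that such blow-ups contribute zero. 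I expect this case analysis to be the main technical obstacle.

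Second, once each blow-up is seen to produce a well-defined looped diagram, I would unwind $d^2$ as a sum indexed by ordered pairs of blow-up positions $\bigl((i,v_k),(j,v_l)\bigr)$. The underlying Sullivan-diagram differential is already known (via \cite{wahl11}, passing through the quotient defining commutative Sullivan diagrams) to square to zero, the cancellation being organised by matching each ordered pair with its ``reverse,'' in which the roles of $(i,v_k)$ and $(j,v_l)$ are swapped and both indices are relabelled to account for the intervening blow-up. This matching is achieved exactly by the alternating signs $(-1)^{1+\sum_{j<i}|v_j|}$ (equivalently, the signs coming from the canonical orientation), as in \cite[Section 2.10]{wahl11}.

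Third, I would check that these cancellations persist after remembering the loops. The key point is that the two loop-level operations---``delete $s^k_i$'' and ``delete $s^l_j$'' (with appropriate renumbering)---commute strictly on ordered sequences of segments. Hence the two matched terms in the Sullivan-diagram $d^2$ carry identical loop data, and the sign cancellation upgrades verbatim to looped diagrams. Combined with the first step (ensuring that the matched pairs both land in genuine looped diagrams, or else both vanish), this gives $d\circ d = 0$, which is what is claimed.
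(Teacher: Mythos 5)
Your route is genuinely different from the paper's. You prove $d^2=0$ directly, by pairing each ordered pair of blow-ups with its reverse and checking that the sign convention makes the two matched terms cancel, reducing the diagram-level cancellation to the known case of (commutative) Sullivan diagrams and then observing that segment-deletion operations on loops commute strictly. The paper instead exhibits the whole complex as the reduced chain complex of a $p$--multisimplicial set: one allows unlabeled leaves at arbitrary positions of the white vertices, takes as face maps $d_i$ the blow-up of the $(i-1)$-st and $i$-th half-edges (with the induced deletion of $s^k_i$ from the loops) and as degeneracies the insertion of a unit leaf (Remark \ref{Re:addleaves}), and then simply invokes the simplicial identities. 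The two arguments encode the same combinatorics --- the identity $d_id_j=d_{j-1}d_i$ for $i<j$ \emph{is} your ``matched pair with relabelled indices'' --- but the paper's packaging handles the interaction of blow-ups at different white vertices for free via the multisimplicial structure, and it produces the degeneracies, which are needed later (for the reduced complex and for the gluing construction in the composition). Your version is more elementary and self-contained but requires you to spell out the index bookkeeping that the simplicial identities hide.

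The one place where your plan is not yet a proof is exactly the point you flag: whether deleting $s^k_i$ from a loop can violate condition (4). It can. If half-edges $i-1$ and $i$ at $v_k$ lie on the same arc component, a loop may contain a subsequence $(\eps a,\ s^k_i,\ -\eps a)$ satisfying (1)--(4), and deletion of $s^k_i$ produces the forbidden backtracking $(\eps a, -\eps a)$. You offer two remedies but commit to neither, and the second one (``declare such blow-ups zero'') is dangerous for your own cancellation argument: a term of $d^2$ could then vanish in one order of blow-ups but not in the other (for instance when the other blow-up deletes the segment $a$ itself before the backtracking is created), so the matched pairs no longer cancel automatically and you would have to re-examine those configurations separately. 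Be aware that the paper's own one-line proof is silent on this point as well, so you are not missing an argument that the paper supplies; but to make your proof complete you must either show that condition (4) survives deletion (it does not in general, as above), weaken the definition of loop used in the image of $d$, or carry out the extra case analysis showing that the problematic terms still cancel in pairs.
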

\begin{proof}
One checks that the complex is the reduced chain complex of a $p$--multisimplicial set obtained by allowing unlabeled leaves at arbitrary positions. At each white vertex $v_k$ we have a simplicial set with boundary maps $d_i$ blowing up the $(i-1)$-st and $i$-th incoming leaves (and doing the induced procedure to the loops) and degeneracies adding in a unit leaf in between the $i$-th and $(i+1)$-st vertex and replacing $s^k_i$ by the two pieces (and relabeling the other segments at that vertex). The simplicial identities hold.
\end{proof}

\begin{Remark} \label{Re:addleaves}
In the previous proof we added a leaf to a vertex, replaced the corresponding boundary segment by the two new pieces and relabeled the others. Morally, we did nothing to our loop (cf. Figure \ref{fig:addleaf}), but being precise, given a looped diagram $(\Gamma, \gamma)$ after adding a leaf to the white vertex $v_k$ in between the old edges $j$ and $j+1$ we get the diagram $(\Gamma', \gamma')$ as follows: The commutative Sullivan diagram $\Gamma'$ is the diagram $\Gamma$ with the extra half-edge. The new loop $\gamma'$ is obtained from $\gamma$ by replacing all $s^k_i$ for $i>j$ by $s^k_{i+1}$ and if $s^k_j$ was in $\gamma$ with positive orientation, it is replaced by $s^k_j$ followed by $s^k_{j+1}$ and if it appeared with negative orientation by $-s^k_{j+1}$ and $-s^k_j$.
%
%
%
\end{Remark}
\begin{figure}[!ht]

 \center
\input{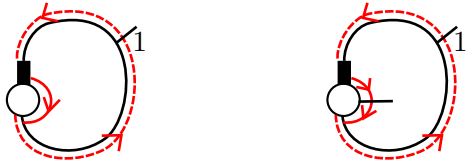}
	\caption{Adding a leaf}
		\label{fig:addleaf}
\end{figure}

\begin{Remark}
The set of  $\oc{p}{(m,n)}$--looped diagrams with all loops positively oriented is a subcomplex of the whole complex. We denote this chain complex by $\oc{p}{(m,n)}$--plD. 
\end{Remark}

We now construct a dg-category out of $\oc{p}{(m,n)}$--lD, extending the category of Sullivan diagrams.

\begin{Def}\label{def:lD} Let $\lDa$ be the dg-category of looped diagrams with objects pairs of natural numbers $\oc{n}{m}$ and morphism $\lDa(\oc{n_1}{m_1}, \oc{n_2}{m_2})$ given by the chain complex $\oc{n_2}{(m_1+m_2, n_1)}$--lD and composition defined by
\begin{figure}[!ht]

 \center
\input{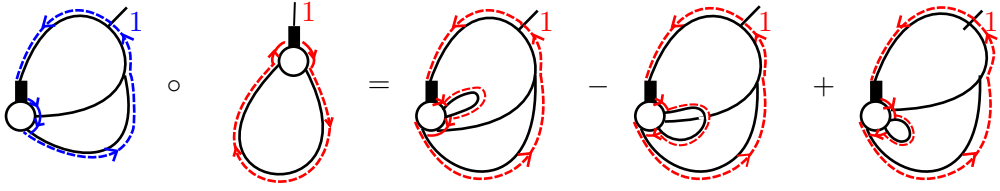}
	\caption{Composition $\circ$}
		\label{fig:comp2}
\end{figure}
taking elements $x=(\Gamma, \gamma_1, \cdots, \gamma_{n_1}) \in \lDa(\oc{n_1}{m_1}, \oc{n_2}{m_2})$ and $y=(\Gamma', \gamma'_1, \cdots, \gamma'_{n_2}) \in \lDa(\oc{n_2}{m_2}, \oc{n_3}{m_3})$ to the element
$y \circ x=(\widetilde{\Gamma}, \widetilde{\gamma}_1, \cdots,  \widetilde{\gamma}_{n_1}) \in \lDa(\oc{n_1}{m_1}, \oc{n_3}{m_3})$ 
which is zero if there is an $i$ such that the loop $\gamma'_i$ is constant and the vertex $v_i$ in $\Gamma$ has more than one half-edge attached to it and else is computed as the sum of all possible $([ G ], g_1, \cdots,  g_{n_1})$ (with a sign) obtained as follows
\begin{enumerate} \item The commutative Sullivan diagrams $[ G ]$ are the equivalence classes of black and white graphs obtained from $x$ and $y$  by
\begin{enumerate}
\item removing the $n_2$ white vertices from $\Gamma$,
\item For each $i=1, \cdots, n_2$ identifying the start half-edge of the $i$--th white vertex $v_i$ of $\Gamma$ with the $i$--th labeled leaf of $\Gamma'$,
\item starting with $i=1$ and continuing inductively
\begin{enumerate}
\item attaching the remaining half-edges from the vertex $v_i$ to the white vertices of $\Gamma'$ along the loop $\gamma'_i$ following their cyclic ordering (in general we here have several possibilities),
\item replacing the $\gamma'_j$ by the induced ones as describe in Remark \ref{Re:addleaves} (which morally does not do anything),
\end{enumerate}
\item attaching the last $m_2$ labeled leaves of $\Gamma$ to the leaves of $\Gamma'$ labeled $n_2+1, \cdots n_2+m_2$ respecting the order.
\end{enumerate}
\item The gluing defines a map from the boundary segments around the vertex $v_i$ in $\Gamma$ to ordered subsets of the loop $\gamma'_i$ which are sets of boundary segments in $[G]$. All the subsets are disjoint and putting them together following the order of the boundary segments of $v_i$ reproduces the loop $\gamma'_i$. Since we have such a map for each $i$, we get a map from $\{\text{boundary segments in} \ \Gamma \}$ to $\{\text{boundary segments in} \ [G]\}$. We define $g_j$ to be the image of $\gamma_j$ under this map.
\end{enumerate}
The fact that the $g_j$ are again loops follows directly from the construction. 

The orientation (and thus the sign) is obtained by juxtaposition of the orientations of $\Gamma$ and $\Gamma'$ as explained in \cite[Section 2.8]{wahl11}. However, we give a more explicit (but equivalent) way to compute the sign. It is computed by putting all non-start half-edges of $\Gamma$ to the right of the start half-edge of the first white vertex in $\Gamma'$ in their cyclic order and the order of the white vertices in $\Gamma$ (i.e. the first half-edge of the first white vertex of $\Gamma$ is next to the start half-edge of $\Gamma'$) and then computing the parity of the number of half-edges they have to pass to move to their final position. If they are glued left of the start half-edge of a white vertex then they can move to the right of the start half-edge of the next white vertex in $\Gamma'$ which does not change the sign.
\end{Def}
\begin{figure}[ht!]
\vspace{-1cm}  \begin{center}
\subfigure
       [The first building block of the identity]
        {\hspace{1.5cm}
          
            \input{pic_id}\hspace{1cm}
            \label{fig:id1}
            \hspace{0.5cm}
        }%
        \hspace{2.5cm}
        \subfigure
       [The second building block of the identity]
        {\hspace{1.5cm}       
            \input{pic_id2}\hspace{1cm}
            \label{fig:id2}
            \hspace{0.5cm}
        }%
    \end{center}
   \caption[The identity]{
       The element $\id_{\oc{1}{0}} \in \lD_+(\oc{1}{0}, \oc{1}{0})$ and the element $\id \in \lDa(\oc{0}{1}, \oc{0}{1})$}
     
   \label{fig:idgen}
\end{figure}
The identity element $\id_{\oc{n_1}{m_1}}$ in $\lDa(\oc{n_1}{m_1}, \oc{n_1}{m_1})$ is given by the element $\id_{\oc{1}{0}}\amalg \cdots \amalg \id_{\oc{1}{0}} \amalg \id \amalg \cdots \amalg \id$ with $\id_{\oc{1}{0}} \in \lDa(\oc{1}{0}, \oc{1}{0})$ the $\oc{1}{(0,1)}$--looped diagram shown in Figure \ref{fig:id1} and $\id \in \lDa(\oc{0}{1}, \oc{0}{1})$ the $\oc{0}{(2,0)}$--looped diagram (without white vertices and loops) shown in Figure \ref{fig:id2}.

The composition defined above is associative. Examples are given in Figure \ref{fig:comp2} and Figure \ref{fig:comp2.2}.

\begin{Def}\label{def:posbdry}
Let $\lDa_+$ be the dg-category of \emph{looped diagrams with positive boundary condition} with the same objects as $\lDa$ and morphisms $\lDa_+(\oc{n_1}{m_1}, \oc{n_2}{m_2})$ those looped diagrams in $\oc{n_2}{(m_1+m_2, n_1)}$--lD where every connected component contains at least one white vertex or one of the $m_2$ last labeled leaves, i.e. a leaf labeled by a number in $\{n_1+m_1+1, \ldots, n_1+m_1+m_2\}$.
\end{Def}
\begin{figure}[ht!]

 \center
\input{pic_comp2}
	\caption{Composition $\circ$}
		\label{fig:comp2.2}
\end{figure}
\begin{Def}\label{def:plD}
The dg-category $\plDa$ (and $\plDa_+$) of \emph{positively oriented looped diagrams (with positive boundary condition)} has the same objects as $\lDa$ and and morphism $\plDa(\oc{n_1}{m_1}, \oc{n_2}{m_2})$ given by the chain complex $\oc{n_2}{(m_1+m_2, n_1)}$--plD (lying in $\lDa_+(\oc{n_1}{m_1}, \oc{n_2}{m_2})$, respectively).
\end{Def}

%

\begin{Prop}
The composition $\circ$ of looped diagrams defined above is a chain map
\[\lDa(\oc{n_1}{m_1}, \oc{n_2}{m_2}) \otimes \lDa(\oc{n_2}{m_2}, \oc{n_3}{m_3}) \to \lDa(\oc{n_1}{m_1}, \oc{n_3}{m_3}).\]
\end{Prop}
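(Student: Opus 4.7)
The plan is to verify the graded Leibniz identity
\[
d(y \circ x) = y \circ d(x) + (-1)^{|x|}\, d(y) \circ x,
\]
so that $\circ$ is compatible with the differential on the tensor product. The starting point is to identify where white vertices of the composed diagram $y \circ x$ live. They come in two flavors: the $n_1$ input white vertices inherited unchanged from $x$, and the $n_3$ white vertices inherited from $y$, now carrying additional half-edges arising from the arc components of $x$ that were glued in along the loops $\gamma'_i$ of $y$.

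With this classification fixed, the differential $d(y\circ x)$ splits into three kinds of blow-ups: (i) blow-ups at the input vertices of $x$; (ii) blow-ups at a white vertex $w$ of $y$ in which every glued-in arc component of $x$ stays entirely on one side of the split; and (iii) blow-ups at a white vertex of $y$ in which the half-edges coming from a single glued arc component of $x$ get separated. Type (i) matches the contribution of $y \circ d(x)$ coming from the input vertices of $x$, since the gluing data used by $\circ$ only involves the output vertices and the last $m_2$ leaves of $x$ and is unaffected by a blow-up far away. Type (ii) matches the contribution of $(-1)^{|x|} d(y) \circ x$ coming from blow-ups of white vertices of $y$: such a blow-up can equivalently be carried out on $y$ first, and the induced loops as in Remark \ref{Re:addleaves} dictate how the arc components redistribute during the subsequent composition.

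The main work lies in showing that type (iii) cancels the remaining contribution to $y \circ d(x)$, namely the terms coming from blow-ups of the $n_2$ output vertices of $x$, i.e.\ the vertices that get deleted when forming $y \circ x$. A blow-up of such an output vertex $v$ splits its half-edges into two cyclically compatible sets $S_1, S_2$ connected by a new edge. After composing with $y$, the two resulting new white vertices must be glued along the loop $\gamma'_i$ based at the leaf that previously held $v$'s start half-edge, producing all ways of inserting the new edge so that $S_1$ and $S_2$ land on either side of a segment used by $\gamma'_i$. These insertions should be in natural bijection with the type (iii) blow-ups of the corresponding white vertex of $y$ that separate the arc component coming from $v$.

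The hardest step is the sign check: I expect the bijection between type (iii) blow-ups and composed output-vertex blow-ups to produce opposite signs, reducing to the standard simplicial cancellation used in \cite[Section 2.8]{wahl11} for Sullivan diagrams, suitably adapted to track the loops. The explicit sign prescription given in Definition \ref{def:lD}, together with the alternating sign of the blow-up differential introduced in the proof of Lemma \ref{le:chcplx}, makes this accounting local to each affected pair of white vertices, so the verification ultimately reduces to a case-by-case comparison on a single pair $(v, w)$, with $v$ an output vertex of $x$ glued into a vertex $w$ of $y$; the fact that the new loops $g_j$ are built functorially from the $\gamma_j$ via the attaching map ensures that the loop data is compatible on both sides of the identity.
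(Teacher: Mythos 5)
Your overall strategy (verify the Leibniz rule by classifying the boundary terms of $y\circ x$ and matching them against $y\circ dx$ and $(-1)^{|x|}dy\circ x$) is the only reasonable one and is also the paper's, but your classification rests on a structural misreading and omits the part of the argument that actually carries the proof. First, $x\in\lDa(\oc{n_1}{m_1},\oc{n_2}{m_2})$ is a $\oc{n_2}{(m_1+m_2,n_1)}$--looped diagram: it has $n_2$ white vertices (its outputs) and $n_1$ \emph{loops} as inputs; there are no ``$n_1$ input white vertices inherited unchanged from $x$''. In the composition \emph{all} white vertices of $x$ are deleted and their half-edges are redistributed along the loops of $y$, so $y\circ x$ has exactly $n_3$ white vertices, all coming from $\Gamma'$. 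Your type (i) is therefore vacuous, and the terms of $y\circ dx$ you would need to account for are precisely the blow-ups at the (output) white vertices of $x$.

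More seriously, your trichotomy of blow-ups of $y\circ x$ is not the relevant one and is not exhaustive in the way the argument needs. Since a blow-up merges a \emph{pair of adjacent half-edges} at a white vertex, the correct case division is by the provenance of that pair: both from $\Gamma'$ (these give $(-1)^{|x|}dy\circ x$), both from the same white vertex of $\Gamma$ (these give $y\circ dx$, including two special terms where the merged pair involves the old start half-edge of a vertex of $x$ -- that half-edge is identified with a labeled leaf of $\Gamma'$ rather than glued along a loop, and you do not treat it), one from $\Gamma$ and one from $\Gamma'$, and two from \emph{different} white vertices of $\Gamma$. The last two classes correspond to nothing in $y\circ dx$ or $dy\circ x$; the heart of the proof is that they cancel \emph{internally in pairs}, because the composition is a sum over all order-preserving ways of distributing the half-edges of $x$ along the loops of $y$, and sliding a half-edge of $x$ past an adjacent position produces the partner term with opposite sign. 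Your proposal has no mechanism for this cancellation (your type (iii) is instead matched against $y\circ dx$), so the identity would not close. You also omit the degenerate cases built into the definition of $\circ$ (the composition vanishing when a loop $\gamma_i'$ is constant while $v_i$ carries more than one half-edge, and contractions of a segment constituting an entire loop), which must be checked separately on both sides.
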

\begin{proof}
Take $x=(\Gamma, \gamma_1, \cdots, \gamma_{n_1}) \in \lDa(\oc{n_1}{m_1}, \oc{n_2}{m_2})$ and $y=(\Gamma', \gamma'_1, \cdots, \gamma'_{n_2}) \in \lDa(\oc{n_2}{m_2}, \oc{n_3}{m_3})$ 
We need to show that $(-1)^{|x|}dy \circ x+y \circ dx = d(y \circ x)$.

We refer to edges at the white vertex of the composition as coming from $\Gamma$ if they where attached in the gluing process and as coming from $\Gamma'$ if they were belonging to $\Gamma'$ before.
 The differential in $y \circ x$ comes from four different kinds of boundaries, which is illustrated on the example in Figure \ref{fig:multidiff} where we compute the differential of the second to last summand of the composition shown in Figure \ref{fig:comp2.2}.
\begin{figure}[ht!]
     \begin{center}
     \input{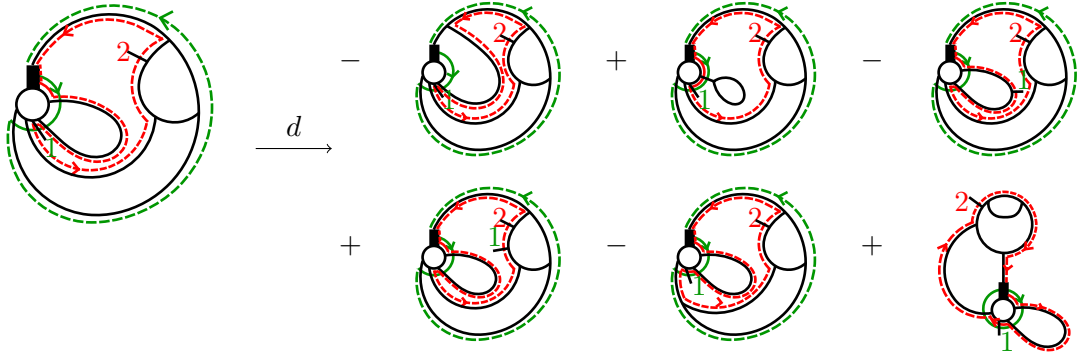}
     \end{center}
   \caption{
       The differential of the second to last term of the composition in Figure \ref{fig:comp2.2}
     }
   \label{fig:multidiff}
\end{figure}
The four kinds of boundaries are described as follows:
\begin{enumerate}
\item The boundaries coming from the multiplication of edges belonging to $\Gamma'$ together (cf. first, second and last summand in Figure \ref{fig:multidiff}).
\item The boundaries coming from the multiplication of edges originally belonging to the same white vertex $v_j$ in $\Gamma$ or from two special kinds of boundaries:
Those arising from contracting the first segment of a loop $g_i$ if this segment starts at the arc component which the old start half-edge of $v_j$ was attached to and ends at the old second half-edge of $v_j$.
Similarly, we additionally take the terms obtained from contracting the  last segment of a loop $g_i$ if this segment starts at the old last half-edge of $v_j$ and ends at the arc component which the old start half-edge of $v_j$ was attached to (cf. second last summand in Figure \ref{fig:multidiff}).
\item The boundaries arising from the multiplication of edges of $\Gamma$ and $\Gamma'$ together (except for the two cases mentioned in the step before) (cf. third summand in Figure \ref{fig:multidiff}).
\item The boundaries obtained from the multiplication of edges of $\Gamma$ coming from two different white vertices (cf. fourth summand in Figure \ref{fig:multidiff}).
\end{enumerate}

We now show that $dy \circ x$ gives exactly all summands appearing in 1., $y \circ dx$ all those appearing in 2. and that the sums of the diagrams in 3. and in 4. are zero (i.e. that every diagram shows up twice with opposite sign).

\begin{enumerate}
\item In $dy$ we have two kinds of boundaries, the ones coming from contracting boundary segments which are not contained in any loop and those contracting segments appearing in loops. 

In the first case the two neighbored edges will also be neighbored in $y \circ x$ and multiplying them first and then composing with $x$ or first composing and then multiplying is the same.

In the second case we again have to distinguish two cases. First, if we consider contracting a segment $s^k_j$ which appears in a loop but none of the loops only consists of this segment (i.e. $\gamma'_i \neq \{ s^k_j \})$. Then in the composition there are summands where we did not glue any edges into this segment. Contracting the segment in these summands of $y \circ x$ agrees with the composition of $x$ with those boundaries of $y$ where we contracted that segment in $\Gamma'$.
Second, if we have a loop $\gamma'_i=\{ s^k_j \}$ but $s^k_j$ is not the whole boundary segment (in which case there is no contraction on either of the sides) then the contraction of this segment is a constant loop, so it only gives a term in the composition if there is only one edge attached to $v_i$ in $\Gamma$ and the composition then makes the part of the loop going around $v_i$ (if there was one) constant. But in $y \circ x$ the according segment can only be empty in exactly this case and thus the terms agree.

So up to sign we have shown that $dy \circ x$ agrees with the terms described in 1.

For the sign we divide the edges glued onto $\Gamma'$ into two pairs, those left of the pair of edges of $\Gamma'$ we multiply to get the considered element and those right (if we have several white vertices in $\Gamma'$ and the edges we multiply are attached to $v_i$, edges attached to a white vertex $v_j$ with $j<i$ count as being left). Denote the numbers by $e_{left}$ and $e_{right}$. We have $|x|=e_{left}+e_{right}$. The sign of the boundary that multiplies the two vertices in $d(y \circ x)$ changed by $(-1)^{e_{left}}$ against the sign of the boundary of multiplying these edges in $y$. On the other hand, if we first multiply the  two edges in $\Gamma'$ and then glue $\Gamma$ on, all the edges right of this boundary have to move over one edge less to get to their position, so the sign of the composition changes by $(-1)^{e_{right}}$. Thus the total difference in sign is $(-1)^{e_{left}+e_{right}}=(-1)^{|x|}$, so
the terms of $(-1)^{|x|} dy \circ x$ show up with the same sign as those in $d(y \circ x)$.

\item It is not hard to see that first multiplying neighbored edges around a vertex in $x$ (and contracting the loop accordingly) and then gluing the result onto $y$ or taking those summands of $d(y \circ x)$  were we multiplied neighbored vertices coming from $x$ (and contracted the piece of the loop) agrees. The special cases described in $2$ come from multiplying the start edge with the second or last edge and then gluing it onto $\Gamma'$.
Similar considerations as in the first case show that the two terms show up with the same sign.

\item Write $\gamma'_i=(s_1, \ldots, s_l)$ with the $s_j$ boundary segments in $\Gamma'$. If in $x \circ y$ an edge $e$ of $\Gamma$ was glued as the last edge in a segment $s_j$ for $j<l$, then there is another summand in $x \circ y$ where all edges of $\Gamma$ different to $e$ was glued to the same segment as before, but $e$ was glued as the first edge to $s_{j+1}$. The element obtained from the first element by multiplying the edge $e$ onto the arc component following $s_j$ agrees with the one obtained from the second by multiplying $e$ onto the arc component before $s_{j+1}$ which is the same as the arc component following $s_j$ (by the definition of a loop). An example is given by the red edge in the first and fifth summand in the composition in Figure \ref{fig:comp2.2}. The boundary multiplying it onto the little loop (in the first case from the left in the second from the right) is the same.

Hence, all terms in the differential where we multiplied an edge of $\Gamma$ onto one of $\Gamma'$ show up twice by the above argumentation.

Assume that the first of these two elements form a term in $y \circ x$ with sign $\omega$ and its differential had sign $\sigma$, so in $d(y \circ x)$ the element has sign $\omega \cdot \sigma$. To get the second the specified edge of $\Gamma$ is moved by $r$ steps (to the right of the boundary component). This means that it shows up with sign $(-1)^r \omega$ in $y \circ x$. Moving over the neighbored edge does not change the sign of the differential and moving over more edges changes it by $-1$ each time. Thus the boundary shows up with sign $(-1)^{r-1} \sigma$, so in $d(y \circ x)$ the element has sign $(-1)^r \omega \cdot (-1)^{r-1} \sigma=- \omega \sigma$, i.e. both terms have opposite signs and cancel.
\item We are left to show that terms where we multiplied two edges of $\Gamma$ belonging to two different white vertices together show up twice, too. However, since we glued the edges inductively, the argumentation from the previous step works if we view the edges of the first vertex as fixed and glue the edges of the second vertex onto that graph.

\end{enumerate}
\end{proof}
It is not hard to check that $\lDa_+$, $\plDa$ and $\plDa_+$ are subcategories.

The way we defined our category $\lDa$ we obtain a functor $K:\SD \to \lDa$ which takes a Sullivan diagram $\Gamma \in \SD(\oc{n_1}{m_1},\oc{n_2}{m_2})$  and sends it to the looped diagram $([\Gamma], \gamma_1, \cdots, \gamma_{n_1})$ with $\gamma_i$ the loop starting from the $i$--th labeled leaf and following the boundary cycle the leaf was in before (see Figure \ref{fig:funcK} for an example).
\begin{figure}[ht!]
     \begin{center}
     \input{pic_SDCom}
     \end{center}
   \caption[The functor $K$]{The functor $K$ applied to a diagram in $\SD(\oc{1}{0}, \oc{1}{0})$}
   \label{fig:funcK}
\end{figure}

Before moving on we want to imitate one more construction done to Sullivan diagrams: To make them fit for string topology, in \cite[Section 6.3+6.5]{wahl11} a shifted version was considered. A Sullivan diagram $S$ was shifted by $-d \cdot \chi(S, \partial_{out})$, where $\chi(S, \partial_{out})$ is the Euler characteristic of a representative of $S$ as a CW-complex relative to its outgoing boundary (the $n_2$ white vertices and the $m_2$ labeled outgoing leaves). 
Similarly, we define:
\begin{Def}\label{def:shift}
Let $\lDa_{d}$ to be the shifted version of $\lDa$ where a looped diagram $(\Gamma, \gamma_1, \cdots, \gamma_{n_1})$ gets shifted by $-d \cdot \chi(\Gamma, \partial_{out})$.
\end{Def}
In particular, the functor $K$ also gives a functor $K:\SD_d \to \lDa_d$. For more details on this construction we refer to \cite[Section 6.5]{wahl11}.

\subsection{The split subcomplex of non-constant diagrams}\label{sec:nonconst}

For later purpose we want to split off those diagrams which have constant loops. They clearly form a subcomplex and as we will see below this subcomplex is split. In some situations it will be more natural to work with the non-constant diagrams only.

\begin{Def}\label{def:cst}
A $\oc{p}{(m,n)}$--looped diagram $(\Gamma, \gamma_1, \cdots, \gamma_n)$ is called \emph{partly constant} if one of the $\gamma_j$ is a constant loop. The subcomplex of $\lDa(\oc{n_1}{m_1}, \oc{n_2}{m_2})$ spanned by these diagrams is denoted by $\lDc(\oc{n_1}{m_1}, \oc{n_2}{m_2})$.
\end{Def}
We define the map $p_j:\lDa(\oc{n_1}{m_1}, \oc{n_2}{m_2}) \to \lDc(\oc{n_1}{m_1}, \oc{n_2}{m_2}) \subseteq \lDa(\oc{n_1}{m_1}, \oc{n_2}{m_2})$ by
\[p_j((\Gamma, \gamma_1, \ldots, \gamma_{n_1}))=(\Gamma, \gamma_1, \ldots, \gamma_{j-1}, cst, \gamma_{j+1}, \ldots, \gamma_{n_1})\]
where $cst$ is the constant loop starting at the leaf $j$.

\begin{Lemma}
The map $p_j$ is a chain map.
\end{Lemma}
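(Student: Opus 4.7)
The plan is to unpack the definition of the differential on a looped diagram and observe that $p_j$ interacts trivially with each piece. Recall from the construction preceding Lemma \ref{le:chcplx} that, for a looped diagram $(\Gamma,\gamma_1,\dots,\gamma_{n_1})$, the differential is a signed sum, ranging over white vertices $v_k$ and blow-up positions $i$, of terms $(\Gamma^{k,i},\gamma_1^{k,i},\dots,\gamma_{n_1}^{k,i})$, where $\Gamma^{k,i}$ is the blow-up of $\Gamma$ and $\gamma_l^{k,i}$ is obtained from $\gamma_l$ by deleting the segment $s^k_i$ wherever it appears and relabeling all segments $s^k_{i'}$ with $i'>i$. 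The sign associated to the pair $(k,i)$ is the combinatorial one described there, depending only on $\Gamma$, on $k$, and on $i$; it makes no reference to the loops.

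First I would record the key elementary observation: the constant loop is the empty ordered set of segments, so deleting any $s^k_i$ from it and relabeling produces again the empty ordered set. In symbols, $(cst)^{k,i}=cst$ for every $(k,i)$. Moreover, for $l\neq j$, the update $\gamma_l\mapsto\gamma_l^{k,i}$ clearly does not involve $\gamma_j$ at all.

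Combining these two facts, I would compute both sides of $d\circ p_j=p_j\circ d$ on a generator. Applying $p_j$ and then $d$ gives
\[
 d\bigl(p_j(\Gamma,\gamma_1,\dots,\gamma_{n_1})\bigr)=\sum_{k,i}\pm\bigl(\Gamma^{k,i},\gamma_1^{k,i},\dots,\gamma_{j-1}^{k,i},(cst)^{k,i},\gamma_{j+1}^{k,i},\dots,\gamma_{n_1}^{k,i}\bigr),
\]
while applying $d$ first and then $p_j$ gives
\[
 p_j\bigl(d(\Gamma,\gamma_1,\dots,\gamma_{n_1})\bigr)=\sum_{k,i}\pm\bigl(\Gamma^{k,i},\gamma_1^{k,i},\dots,\gamma_{j-1}^{k,i},cst,\gamma_{j+1}^{k,i},\dots,\gamma_{n_1}^{k,i}\bigr),
\]
with the same signs in both sums, since the signs are functions of $(\Gamma,k,i)$ alone. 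By the observation above the two expressions coincide term by term.

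There is no real obstacle here; the only thing to be careful about is confirming that the sign of each blow-up summand truly does not depend on which loops sit on top of $\Gamma$. This is immediate from the sign convention stated after the definition of the differential (alternating signs starting from $(-1)^{1+\sum_{j<i}|v_j|}$ at each white vertex), which is read off from the orientation of $\Gamma$ and the position $i$ only. Finally, the image of $p_j$ lies in $\lDc(\oc{n_1}{m_1},\oc{n_2}{m_2})$ by construction, so $p_j$ is indeed a chain map into the subcomplex of partly constant diagrams.
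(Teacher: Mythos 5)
Your proposal is correct and follows essentially the same approach as the paper, which observes in one sentence that contracting a boundary segment of $\gamma_j$ and then forgetting the rest of the loop commutes with forgetting the loop first and then contracting. Your version simply spells out the term-by-term comparison and the independence of the signs from the loop data, both of which are exactly the points the paper's argument relies on implicitly.
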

\begin{proof}
Contracting a boundary segment in $\Gamma$ which was part of the loop $\gamma_j$ and then forgetting the rest of the loop commutes with first forgetting the whole loop and then contracting the boundary segment, thus $p_j$ commutes with the differential.
\end{proof}

For a set $T=\{t_1, \cdots, t_k\} \subseteq \{1, \cdots, n_1\}$ we define $p_T=p_{t_k} \circ \cdots \circ p_{t_1}$, i.e. the map making the loops corresponding to $T$ constant.

Moreover, we define $p_{cst}:\lDa(\oc{n_1}{m_1}, \oc{n_2}{m_2}) \to \lDc(\oc{n_1}{m_1}, \oc{n_2}{m_2}) $ by
\[p_{cst}=\sum_{k=1}^{n_1} \sum_{\substack{T \subseteq  \{1, \cdots, n_1\}\\ |T|=k}}(-1)^{k+1} p_T\]
\begin{Prop}
The map $p_{cst}$ is a splitting of the inclusion of the subcomplex $i:\lDc(\oc{n_1}{m_1}, \oc{n_2}{m_2}) \hookrightarrow \lDa(\oc{n_1}{m_1}, \oc{n_2}{m_2})$, i.e.
\[p_{cst} \circ i:\lDc(\oc{n_1}{m_1}, \oc{n_2}{m_2})  \to \lDc(\oc{n_1}{m_1}, \oc{n_2}{m_2}) \]
is the identity.
\end{Prop}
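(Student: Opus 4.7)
The plan is to recognize $p_{cst}$ as an inclusion-exclusion formula built out of the commuting idempotents $p_j$, and then exploit the defining property of $\lDc$: some loop is already constant.

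First I would record two elementary observations about the operators $p_j$. They are idempotent, $p_j\circ p_j = p_j$, because once a loop has been replaced by the constant loop at leaf $j$, replacing it again does nothing. They also pairwise commute, $p_i\circ p_j = p_j\circ p_i$, since the operations act on disjoint loops. Consequently, for any subset $T = \{t_1,\dots,t_k\}$, the composition $p_T = p_{t_k}\circ\cdots\circ p_{t_1}$ is independent of the chosen order, and $p_T\circ p_{T'} = p_{T\cup T'}$. In particular, the operators $\Id - p_j$ commute with one another.

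Second, I would rewrite $p_{cst}$ in product form. Expanding the commuting product
\[
\prod_{j=1}^{n_1}(\Id - p_j) = \sum_{T\subseteq\{1,\dots,n_1\}}(-1)^{|T|}\,p_T,
\]
and isolating the $T=\emptyset$ term (which is $\Id$), one obtains
\[
\Id - \prod_{j=1}^{n_1}(\Id - p_j) = -\sum_{\emptyset\neq T\subseteq\{1,\dots,n_1\}}(-1)^{|T|}\,p_T = \sum_{\emptyset\neq T\subseteq\{1,\dots,n_1\}}(-1)^{|T|+1}\,p_T = p_{cst}.
\]
So $p_{cst} = \Id - \prod_{j=1}^{n_1}(\Id - p_j)$ as an endomorphism of $\lDa(\oc{n_1}{m_1},\oc{n_2}{m_2})$.

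Third, let $x = (\Gamma,\gamma_1,\dots,\gamma_{n_1})\in \lDc$. By definition of $\lDc$, there exists some index $j_0$ with $\gamma_{j_0}$ already constant, and therefore $p_{j_0}(x) = x$, i.e.\ $(\Id - p_{j_0})(x) = 0$. Since the factors commute, I may bring $(\Id - p_{j_0})$ to the right in $\prod_{j}(\Id - p_j)$, and conclude that $\prod_{j}(\Id - p_j)(x) = 0$. Hence $p_{cst}(x) = x$, which is precisely the statement that $p_{cst}\circ i = \Id_{\lDc}$. The only mild subtlety is verifying commutativity and idempotence of the $p_j$ carefully; both are immediate from the definition, so no step should be a serious obstacle.
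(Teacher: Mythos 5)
Your proof is correct, and it reaches the same cancellation as the paper by a slightly different packaging. The paper fixes an index $j_0$ with $\gamma_{j_0}$ constant (it takes $j_0=1$), observes that $p_T = p_{T\cup\{j_0\}}$ on such an $x$, and pairs each non-empty $T$ not containing $j_0$ with $T\cup\{j_0\}$; these pairs cancel in the alternating sum, leaving only the unpaired term $p_{\{j_0\}}(x)=x$. You instead factor the alternating sum multiplicatively as $p_{cst}=\Id-\prod_{j}(\Id-p_j)$ and kill the product using $(\Id-p_{j_0})(x)=0$. These are two presentations of the same inclusion--exclusion argument: the subset pairing $T\leftrightarrow T\cup\{j_0\}$ is exactly what the vanishing of the factor $(\Id-p_{j_0})$ encodes. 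Your version is arguably cleaner in that commutativity of the $p_j$ does all the bookkeeping at once (note that idempotence, while true, is not actually needed: in the expansion of $\prod_j(\Id-p_j)$ each $p_j$ occurs at most once, so only commutativity and the identity $p_{j_0}(x)=x$ for $x\in\lDc$ enter). The paper's version makes the single surviving term $p_{\{j_0\}}(x)$ visible explicitly, which is perhaps more transparent to a reader who has not internalized the operator calculus. Both are complete proofs.
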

\begin{proof}
Let $x=(\Gamma, \gamma_1, \ldots, \gamma_{n_1}) \in \lDc(\oc{n_1}{m_1}, \oc{n_2}{m_2})$. For simplicity of notation we assume that $\gamma_1$ is constant. For a set $T$ with $1 \notin T$ we have that $p_{T}=p_{\{1\} \cup T}$. Since $|\{1\} \cup T|=|T|+1$ these terms show up with opposite signs and thus cancel. The family of all non-empty subsets $T$ with $1 \notin T$ together with $\{1\} \cup T$ are all non-empty subsets of $\{1, \cdots, n_1\}$ except for the set $\{1\}$. Thus the only non-trivial term in $p_{cst}(x)$ is $p_{\{1\}}(x)$ which is $x$ since the first loop was already constant.
\end{proof}

\begin{Def}\label{def:non-cst}
For a looped diagram $(\Gamma, \gamma_1, \ldots, \gamma_{n_1}) \in \lDa(\oc{n_1}{m_1}, \oc{n_2}{m_2})$ we define
\[ (\Gamma, \langle \gamma_1\rangle, \ldots, \langle\gamma_{n_1}\rangle)=(\Gamma, \gamma_1, \ldots, \gamma_{n_1}) - p_{cst}((\Gamma, \gamma_1, \ldots, \gamma_{n_1})).\]
The complex spanned by these diagrams is denoted by $\lD(\oc{n_1}{m_1}, \oc{n_2}{m_2})$.
\end{Def}
Since $p_\emptyset(x)=\id$ we can rewrite
\[ (\Gamma, \langle \gamma_1\rangle, \ldots, \langle\gamma_{n_1}\rangle)=\sum_{T \subseteq \{1, \cdots, n_1\} }(-1)^{|T|}p_T(\Gamma, \gamma_1, \ldots, \gamma_{n_1}) .\]
\begin{figure}[ht!]
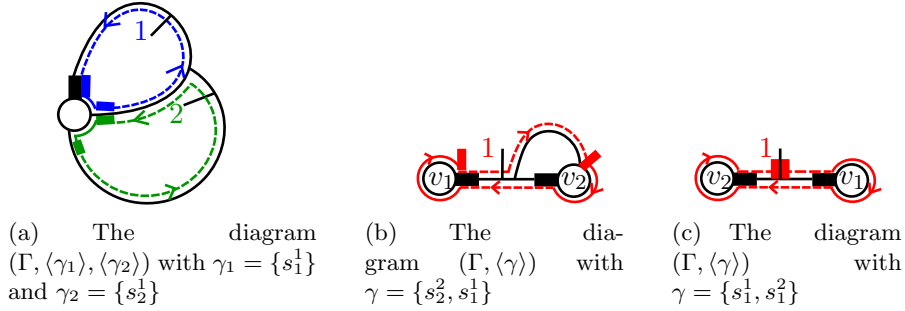

\vspace{-2cm}
     \begin{center}
     \subfigure[c]
        [The diagram $(\Gamma, \langle \gamma_1 \rangle, \langle \gamma_2 \rangle)$ with $\gamma_1=\{s_1^1\}$ and $\gamma_2=\{s_2^1\}$]
        {
            \label{fig:nc1}
            \input{pic_nc11}
        }%
        \hspace{0.5cm}
             \subfigure
        [The diagram $(\Gamma, \langle \gamma \rangle)$ with $\gamma= \{ s_2^2, s_1^1\}$]
        {%
            \label{fig:nc3}
            \input{pic_nc3}
        }%
         \hspace{0.5cm}
                \subfigure[The diagram $(\Gamma, \langle \gamma \rangle)$ with $\gamma= \{ s_1^1, s_1^2\}$]
        {%
            \label{fig:nc4}
            \input{pic_nc4}
        }%
    \end{center}
   \caption[Three non-constant diagrams in the new notation]{
       Three non-constant diagrams in the new notation
     }
   \label{fig:non-cst}
\end{figure}

In pictures we mark the loops $\langle\gamma\rangle$ by a bar at the start of the first boundary segment contained in the loop and a bar at the end of the last boundary segment (so it is the picture of $(\Gamma, \gamma_1, \ldots, \gamma_{n_1})$ with extra bars in there). Examples are given in Figure \ref{fig:non-cst}.

\begin{Cor}
We have a splitting
\[ \lDa(\oc{n_1}{m_1}, \oc{n_2}{m_2}) \cong \lDc(\oc{n_1}{m_1}, \oc{n_2}{m_2}) \oplus \lD(\oc{n_1}{m_1}, \oc{n_2}{m_2}).\]
\end{Cor}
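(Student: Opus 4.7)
The plan is to exhibit the decomposition as the internal direct sum arising from an idempotent chain endomorphism. Viewed through the inclusion $i\colon \lDc \hookrightarrow \lDa$, the composite $e := i \circ p_{cst}\colon \lDa \to \lDa$ is a chain map (both factors are), and the preceding Proposition gives $p_{cst}\circ i = \id_{\lDc}$. Since the image of $p_{cst}$ lies in $\lDc$, this immediately yields $e \circ e = i \circ (p_{cst}\circ i)\circ p_{cst} = i \circ p_{cst} = e$, so $e$ is an idempotent chain map on $\lDa$.

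Once we have an idempotent chain endomorphism, the standard splitting $\lDa = \operatorname{im}(e) \oplus \ker(e)$ holds as chain complexes: every $x$ decomposes uniquely as $x = e(x) + (x - e(x))$, and both $\operatorname{im}(e)$ and $\ker(e)$ are subcomplexes since $e$ commutes with $d$. What remains is to identify the two summands with $\lDc$ and $\lD$, respectively. For the image, $\operatorname{im}(e) \subseteq \lDc$ by construction, and $\operatorname{im}(e) \supseteq \lDc$ because $e|_{\lDc} = \id$; hence $\operatorname{im}(e) = \lDc$. For the kernel, note that the spanning elements of $\lD$ are precisely $(\Gamma, \langle\gamma_1\rangle, \ldots, \langle\gamma_{n_1}\rangle) = x - e(x)$ for looped diagrams $x$, and applying $e$ gives $e(x) - e^2(x) = 0$, so $\lD \subseteq \ker(e)$. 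Conversely, any $y \in \ker(e)$ satisfies $y = y - e(y)$, which by linearity is a $\Z$-linear combination of generators of $\lD$, giving $\ker(e) \subseteq \lD$.

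Combining these two identifications with the splitting $\lDa = \operatorname{im}(e) \oplus \ker(e)$ yields the desired isomorphism of chain complexes $\lDa(\oc{n_1}{m_1},\oc{n_2}{m_2}) \cong \lDc(\oc{n_1}{m_1},\oc{n_2}{m_2}) \oplus \lD(\oc{n_1}{m_1},\oc{n_2}{m_2})$. There is essentially no hard step here: everything follows formally from the fact, already established in the preceding Proposition, that $p_{cst}$ is a retraction of $i$. The only point that deserves a sentence of justification is that $\lD$, defined as the span of the expressions $x - p_{cst}(x)$, really coincides with $\ker(e)$ rather than being a priori larger; this is handled by the two-line argument above.
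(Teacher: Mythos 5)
Your proof is correct and is exactly the (implicit) argument the paper relies on: the preceding Proposition exhibits $p_{cst}$ as a retraction of the inclusion $i$, so $e = i\circ p_{cst}$ is an idempotent chain endomorphism, and the corollary is the resulting image/kernel splitting with $\operatorname{im}(e)=\lDc$ and $\ker(e)=\lD$ by the very definition of $\lD$ as the span of the elements $x-p_{cst}(x)$. The paper omits the proof precisely because it is this formal consequence; your writeup supplies the routine details faithfully.
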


The part of the differential on $\lD(\oc{n_1}{m_1}, \oc{n_2}{m_2})$ coming from a boundary map contracting a loop to a constant loop is trivial. 

The above splitting induces an isomorphism of chain complexes $\lD(\oc{n_1}{m_1}, \oc{n_2}{m_2}) \cong \lDa(\oc{n_1}{m_1}, \oc{n_2}{m_2}) / \lDc(\oc{n_1}{m_1}, \oc{n_2}{m_2})$ under which the class of $(\Gamma, \langle \gamma_1\rangle, \ldots, \langle\gamma_{n_1}\rangle)$ is equivalent to the class of $(\Gamma, \gamma_1, \ldots, \gamma_{n_1})$. One might want to use the second one to compute the composition of two elements in $\lD(\oc{n_1}{m_1}, \oc{n_2}{m_2})$. Unfortunately, the partly constant terms which get subtracted in the definition of $(\Gamma, \langle \gamma_1\rangle, \ldots, \langle\gamma_{n_1}\rangle)$ can contribute non-trivially to the composition. In the next part of the section we provide conditions under which this phenomenon cannot occur.

To do so, we introduce a bit of notation:

We call a white vertex $v_i$ in a  commutative Sullivan diagram $\Gamma$ \emph{singular} if it is of degree zero, i.e. there is only one boundary segment.

For a looped diagram $x=(\Gamma, \gamma_1, \cdots, \gamma_{n_1})$ and a singular vertex $v_i$ we write $x \backslash s^i_1:=(\Gamma, \gamma_1\backslash s^i_1, \cdots, \gamma_{n_1}\backslash s^i_1)$, where $\gamma_j \backslash s^i_j$ is the loop without the boundary segment $s^i_j$. An example is given in Figure \ref{fig:sing}. 
\begin{figure}[ht!]
     \begin{center}
     \input{pic_singremove}
     \end{center}
   \caption{
       $v_1$ singular and $v_1\backslash \{s_1^1\}$ 
     }
   \label{fig:sing}
\end{figure}

For $x=(\Gamma, \gamma_1, \ldots, \gamma_{n_1}) \in  \lDa(\oc{n_1}{m_1}, \oc{n_2}{m_2})$ denote the set of singular vertices of $\Gamma$ by $S_x \subseteq \{ 1, \cdots, n_2\}$ and define $s_T(x)=(x \backslash T)$ for $T \subseteq S_x$. 
%
One checks that for $T \subseteq S_x$ we have $p_{cst}(s_T(x))=s(p_{cst}(x_T))$ and thus $s_T (x-p_{cst}(x))=(\id-p_{cst})(s_T(x))$ hence $s_T:\lD(\oc{n_1}{m_1}, \oc{n_2}{m_2}) \to \lD(\oc{n_1}{m_1}, \oc{n_2}{m_2})$.

For a looped diagram $x=(\Gamma, \langle\gamma_1\rangle, \ldots, \langle\gamma_{n}\rangle) \in \lD(\oc{n_1}{m_1}, \oc{n_2}{m_2})$ we call a subset $T$ of the singular vertices $S_x$ \emph{loop-covering} if there is at least one loop $\gamma_i$ that only consists of boundary segments belonging to the white vertices in $T$ (equivalently, $\gamma \backslash T=cst$). A singular white vertex $v_i$ is called loop-covering, if $T=\{v_i\}$ is-loop covering. Note that if all vertices are loop-covering, also all subsets $T \subseteq S_x$ are loop-covering. The white vertex in Figure \ref{fig:nc1} is not singular. In Figure \ref{fig:nc3} the vertex $v_1$ is singular, but not loop-covering. In Figure \ref{fig:nc4} both vertices $v_1$ and $v_2$ are singular, but none of them is loop-covering. However, the set $T=\{v_1, v_2\}$ is loop-covering.

We define \[\overline{s}(x)=\sum_{\substack{T \subseteq {S}_x\\ T \text{ not loop-covering}}} (-1)^{|T|} s_T(x).\] 

Note that if for $x \in \lD(\oc{n_1}{m_1}, \oc{n_2}{m_2})$ all singular vertices are loop-covering, then
$\overline{s}(x)=x$.

For elements $x=(\Gamma, \langle\gamma_1\rangle, \ldots, \langle\gamma_{n_1}\rangle) \in \lD(\oc{n_1}{m_1}, \oc{n_2}{m_2})$ and $y=(\Gamma', \langle\gamma'_1\rangle, \cdots, \langle\gamma'_{n_2}\rangle) \in \lD(\oc{n_2}{m_2}, \oc{n_3}{m_3})$
 we denote their lifts by $\widehat{x}=(\Gamma, \gamma_1, \cdots, \gamma_{n_1}) \in \lDa(\oc{n_1}{m_1}, \oc{n_2}{m_2})$ and $\widehat{y}=(\Gamma', \gamma'_1, \cdots, \gamma'_{n_2}) \in \lDa(\oc{n_2}{m_2}, \oc{n_3}{m_3})$. Suppose their composition in $\lDa$ is given by
$\widehat{y} \circ \widehat{x}= \sum ([G], g_1, \cdots, g_{n_1})$. Then we define 
\[y \; \tilde{\circ}\;  x :=\widehat{y} \circ \widehat{x}-p_{cst}(\widehat{y} \circ \widehat{x})= \sum ([G], \langle g_1\rangle, \cdots, \langle g_{n_1}\rangle).\]
This is not a chain map, but it is not far from being one as in most cases it agrees with the actual composition $y \circ x$. More precisely, we get:
\begin{Prop}
For elements $x=(\Gamma, \langle\gamma_1\rangle, \ldots, \langle\gamma_{n_1}\rangle) \in \lD(\oc{n_1}{m_1}, \oc{n_2}{m_2})$ and $y=(\Gamma', \langle\gamma'_1\rangle, \cdots, \langle\gamma'_{n_2}\rangle) \in \lD(\oc{n_2}{m_2}, \oc{n_3}{m_3})$ their composition in terms of the above notation is given by
\[ y \circ x= 
y \; \tilde{\circ} \; \overline{s}(x).\]
In particular, $y \circ x \in \lD(\oc{n_1}{m_1}, \oc{n_3}{m_3})$.
\end{Prop}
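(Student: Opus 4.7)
The plan is to compute both sides of the claimed equality as elements of $\lDa$ and display them as the image of a common element under the projection $\pi := \id - p_{cst} : \lDa \to \lD$. Let $\hat x, \hat y \in \lDa$ denote the canonical ``un-barred'' lifts of $x, y$, so that $x = \pi(\hat x)$ and $y = \pi(\hat y)$; the composition $y\circ x$ then coincides with $\pi(\hat y)\circ\pi(\hat x)$ computed in $\lDa$.

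The cornerstone of the argument will be the identity
\[
p_{T'}(\hat y)\circ \hat x \;=\; \begin{cases} \hat y\circ s_{T'}(\hat x) & \text{if } T'\subseteq S_x,\\ 0 & \text{otherwise,}\end{cases}
\]
for every $T'\subseteq\{1,\dots,n_2\}$. Vanishing for $T'\not\subseteq S_x$ is immediate: a constant $\gamma'_k$ in $p_{T'}(\hat y)$ paired with a non-singular $v_k$ in $\Gamma$ kills the composition by definition. For $T'\subseteq S_x$, both sides enumerate the same gluing configurations, since a singular $v_k$ has no non-start half-edges to attach along $\gamma'_k$, so it is irrelevant whether $\gamma'_k$ is declared constant (LHS) or kept as the original non-constant loop from $\hat y$ (RHS). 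Consequently the underlying graphs $[G]$ and the signs (which depend only on positions of glued half-edges) agree on both sides. The loops $g_j$ also match, because replacing $\gamma'_k$ by a constant loop causes the segment $s^k_1$ to map to the empty set, which is exactly the effect of deleting $s^k_1$ from the input loops on the right. Summing over nonempty $T'$ with signs $(-1)^{|T'|+1}$, I will then obtain
\[
\pi(\hat y)\circ\hat x \;=\; \hat y\circ S(\hat x),\qquad S(\hat x):=\sum_{T\subseteq S_x}(-1)^{|T|}\,s_T(\hat x).
\]

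Second, I use the elementary observation that $\hat y\circ p_T(z) = p_T(\hat y\circ z)$ for every $T\subseteq\{1,\dots,n_1\}$ and every $z\in\lDa$: applying $p_T$ either before or after composition only sets $g_j=cst$ for $j\in T$, without changing the graph or the remaining loops. Combined with the displayed identity above (applied with $\hat x$ replaced by $p_T(\hat x)$ and noting that $s_{T'}$ commutes with $p_T$), this yields $\pi(\hat y)\circ p_T(\hat x) = p_T(\hat y\circ S(\hat x))$, hence by linearity $\pi(\hat y)\circ p_{cst}(\hat x) = p_{cst}(\hat y\circ S(\hat x))$. Therefore
\[
\pi(\hat y)\circ\pi(\hat x) \;=\; \hat y\circ S(\hat x) - p_{cst}(\hat y\circ S(\hat x)) \;=\; \pi(\hat y\circ S(\hat x)),
\]
which in particular exhibits $y\circ x$ as an element of $\lD$. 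To identify this expression with $y\,\tilde\circ\,\bar s(x)$, I note that for $T$ loop-covering the diagram $s_T(\hat x)$ has a constant loop, so $\hat y\circ s_T(\hat x)$ is partly constant and lies in $\ker\pi$. Hence the sum $\sum_{T\text{ non-loop-cov}}(-1)^{|T|}\pi(\hat y\circ s_T(\hat x))$ defining $y\,\tilde\circ\,\bar s(x)$ extends without change to all $T\subseteq S_x$, and equals $\pi(\hat y\circ S(\hat x))$.

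The principal obstacle is the cornerstone identity of the second paragraph. While the bijective correspondence of gluing configurations is conceptually transparent, a careful bookkeeping argument is required to check that the underlying graphs, the signs from the orientation conventions used in the composition, and the induced loops $g_j$ truly agree whenever some $\gamma'_k$ is constant but the corresponding $v_k$ is singular. Once this identity is established, the remainder of the proof is formal manipulation with the commuting linear operators $p_T$, $s_T$, and $\pi$.
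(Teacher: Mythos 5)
Your proposal is correct and follows essentially the same route as the paper: the same key identities ($p_{T}(\hat y)\circ\hat x=\hat y\circ s_{T}(\hat x)$ for $T\subseteq S_x$ and $0$ otherwise, commutation of $p_U$ with composition and with $s_T$, and the cancellation of loop-covering $T$'s, which you phrase as $\pi$ killing partly constant composites rather than as the paper's pairing $s_T(p_U)=s_T(p_{U\cup\{j\}})$) organized into the same telescoping of $\id-p_{cst}$ over subsets. The only detail the paper makes explicit that you leave implicit is the identification $\widehat{s_T(x)}=s_T(\widehat{x})$ for non-loop-covering $T$, which is needed to recognize your final sum as $y\,\tilde\circ\,\overline{s}(x)$.
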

\begin{proof}
Using the definition of $x$ and $y$, we need to show that
\[ (\widehat{y}-p_{cst}(\widehat{y})) \circ (\widehat{x} -p_{cst}(\widehat{x}))=\widehat{y} \circ \widehat{\overline{s}(x)}-p_{cst}(\widehat{y} \circ \widehat{\overline{s}(x)}) \]

Let $T \subseteq S_x$ be a subset that is not loop-covering. We claim that $\widehat{s_T(x)}=s_T(\widehat{x})$. To see so, recall that $\widehat{x}$ takes a representation of $x$ as a linear combination of looped diagrams and throws away the partly constant ones. Moreover, $x = \widehat{x} - p_{cst}(\widehat{x})$. We need to see that the partly constant terms of $x \backslash T= \widehat{x} \backslash T - p_{cst}(\widehat{x}) \backslash T$ are exactly given by $p_{cst}(\widehat{x}) \backslash T$. It is clear that all terms in $p_{cst}(\widehat{x}) \backslash T$ are still partly constant. Moreover, by the assumption that no loop of $x$ (and thus no loop of $\widehat{x}$) is completely covered by $T$, the diagram $\widehat{x} \backslash T$ cannot be partly constant, which shows the claim.

Now, the above formula is equivalent to showing that
\[\sum_{T \subseteq \{1, \cdots, n_2\}} \sum_{U \subseteq \{1, \cdots, n_1\}}  p_T(\widehat{y}) \circ p_U(\widehat{x}) =\sum_{\substack{T \subseteq {S}_x\\ T \text{ not loop-covering}}} \sum_{U \subseteq \{1, \cdots, n_1\}}  p_{U}(\widehat{y} \circ s_{T}(\widehat{x})).\]
The proposition follows via the following steps which hold for general elements $a=(\Lambda, \lambda_1, \cdots, \lambda_{n_1}) \in \lDa(\oc{n_1}{m_1}, \oc{n_2}{m_2})$ and $b=(\Lambda', \lambda'_1, \cdots, \lambda'_{n_2}) \in \lDa(\oc{n_2}{m_2}, \oc{n_3}{m_3})$:
\begin{enumerate}
\item For $U \subseteq \{1, \cdots, n_1\}$ we have $p_{U}(a \circ b)=a \circ p_{U}(b)$. 
\item The singular vertices of $a$ and $p_T(a)$ agree.
For any sets $T \subseteq S_a$ and $U \subseteq \{1, \cdots, n_1\}$ the equality $s_T(p_U(a))=p_U(s_T(a))$ holds, since removing first part of a loop via a singular vertex and then the whole loop commutes with first removing the whole loop and then everything else at the white vertex.
\item We have $p_T(b) \circ a= 0$ for $T \nsubseteq S_{a}$, since then we have a vertex with more than one edge glued to a constant loop.
\item We have $b \circ s_T(a)=p_T(b) \circ a$ for $T \subseteq S_a$ since in $p_T(b) \circ a$ all the loops around the singular vertices in $T$ become constant (because we removed them in $b$) and this is the same as first removing them and then gluing them onto $b$.
\item For $T \subseteq S_a$ such that $T$ is loop-covering, we have $\sum_{U \subseteq \{1, \cdots,  n_1\} }(-1)^{|U|}s_T(p_U(a))=0$. To see so, assume that $T$ covers the loop $\gamma_j$ of $a$, i.e. $\gamma_j$ only consists of boundary segments of white vertices belonging to $T$. For $U \subseteq \{1, \cdots, n_1\}$ with $j \notin U$, we get $s_T(p_U(x))=s_T(p_{U \cup \{j\}}(x))$, which implies the above claim.
\end{enumerate}
Plugging this in, we obtain 
\begin{align*}
\sum_{T \subseteq \{1, \cdots, n_2\}} \sum_{U \subseteq \{1, \cdots, n_1\}}  p_T(\widehat{y}) \circ p_U(\widehat{x}) &\stackrel{(3)}{=} \sum_{T \subseteq S_{x}} \sum_{U \subseteq \{1, \cdots, n_1\}}  p_T(\widehat{y}) \circ p_U(\widehat{x})\\
&\stackrel{(4)}{=} \sum_{T \subseteq S_{x}} \sum_{U \subseteq \{1, \cdots, n_1\}}  \widehat{y} \circ s_T(p_U(\widehat{x}))\\
&\stackrel{(5)}{=} \sum_{\substack{T \subseteq {S}_x\\ T \text{ not loop-covering}}} \sum_{U \subseteq \{1, \cdots, n_1\}}  \widehat{y} \circ s_T(p_U(\widehat{x}))\\
&\stackrel{(2)}{=} \sum_{\substack{T \subseteq {S}_x\\ T \text{ not loop-covering}}} \sum_{U \subseteq \{1, \cdots, n_1\}}  \widehat{y} \circ p_U(s_T(\widehat{x}))\\
&\stackrel{(1)}{=} \sum_{\substack{T \subseteq {S}_x\\ T \text{ not loop-covering}}} \sum_{U \subseteq \{1, \cdots, n_1\}}  p_U(\widehat{y} \circ s_T(\widehat{x}))
\end{align*}
and thus the proposition is proven.

\end{proof}
\begin{Cor}\label{cor:loopcov}

Let $x\in \lD(\oc{n_1}{m_1}, \oc{n_2}{m_2})$ and $y \in \lD(\oc{n_2}{m_2}, \oc{n_3}{m_3})$ and assume that all singular vertices in $x$ are loop-covering. 
Then their composition is given by
\[ y \circ x= y \; \tilde{\circ} \; x.\]
This holds in particular if $x$ has no singular vertices.
\end{Cor}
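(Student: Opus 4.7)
The plan is to deduce the corollary directly from the preceding proposition, which asserts $y \circ x = y\,\tilde{\circ}\,\overline{s}(x)$, by showing that under the stated hypothesis one has the equality $\overline{s}(x) = x$. So the whole task reduces to unpacking the sum
\[\overline{s}(x) = \sum_{\substack{T \subseteq S_x\\ T \text{ not loop-covering}}} (-1)^{|T|}\, s_T(x)\]
and identifying which subsets $T \subseteq S_x$ actually contribute.

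First I will handle the empty subset. Since $x \in \lD(\oc{n_1}{m_1}, \oc{n_2}{m_2})$, none of the loops $\langle\gamma_1\rangle, \ldots, \langle\gamma_{n_1}\rangle$ is constant, so no loop of $x$ consists of an empty set of boundary segments. By the definition of loop-covering, this is exactly the statement that $T = \emptyset$ is \emph{not} loop-covering, hence the term $s_\emptyset(x) = x$ appears in $\overline{s}(x)$ with coefficient $(-1)^0 = 1$.

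Next I will treat any non-empty $T \subseteq S_x$. Pick any $v \in T$. By hypothesis $v$ is a loop-covering singular vertex, so there exists a loop $\gamma_i$ of $x$ consisting only of the single boundary segment $s^v_1$ (up to orientation). Since $\{v\} \subseteq T$, the same loop $\gamma_i$ consists only of boundary segments belonging to vertices in $T$, which means $T$ itself is loop-covering. Therefore every non-empty $T \subseteq S_x$ is excluded from the sum defining $\overline{s}(x)$, so the only surviving term is $s_\emptyset(x) = x$, giving $\overline{s}(x) = x$ and hence $y \circ x = y\,\tilde{\circ}\, x$.

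For the final assertion, when $x$ has no singular vertices the set $S_x$ is empty and the hypothesis is vacuously satisfied; the argument above for $T = \emptyset$ applies verbatim and yields the same conclusion. I expect no real obstacle here: the corollary is essentially a bookkeeping reformulation of the previous proposition once one observes that the individual loop-covering property of each singular vertex propagates to every non-empty subset of $S_x$.
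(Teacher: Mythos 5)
Your proof is correct and follows essentially the same route the paper takes: the paper notes just before the corollary that if all singular vertices are loop-covering then every non-empty $T\subseteq S_x$ is loop-covering, so the sum defining $\overline{s}(x)$ collapses to the $T=\emptyset$ term, giving $\overline{s}(x)=x$, and the corollary is then immediate from the preceding proposition. Your careful check that $T=\emptyset$ is \emph{not} loop-covering (because $x$ has no constant loops) is exactly the point the paper leaves implicit, and your argument is sound.
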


\subsection{The type of a diagram and products of diagrams}\label{sec:type}
In this section we provide a finer decomposition of $\lDa(\oc{n_1}{m_1}, \oc{n_2}{m_2})$ on the level of vector spaces. This part is particularly technical and we invite the reader to skip it and come back later if needed.

As already described above, we can concatenate subloops in a commutative Sullivan diagram which start at the same labeled leaf. We denote the concatenation of two loops $\gamma$ and $\gamma'$ by $\gamma \ast \gamma'$. For a commutative Sullivan diagram $\Gamma$ with $t_j$ loops $\gamma^i_j$ for $1 \leq i \leq t_j$ starting at the $j$-th labeled leaf for $1 \leq j \leq n$, we define the element $(\Gamma, \langle \gamma^1_1, \ldots, \gamma^{t_1}_1\rangle, \ldots, \langle\gamma^1_n, \ldots, \gamma^{t_n}_n\rangle) \in \lDa(\oc{n_1}{m_1}, \oc{n_2}{m_2})$ as
\begin{align*}
(\Gamma, \langle \gamma^1_1, \ldots, \gamma^{t_1}_1\rangle, \ldots, \langle\gamma^1_n, \ldots, \gamma^{t_n}_n\rangle) := \hspace{-0.25cm}\sum_{U_1 \subseteq \{1, \dots, t_1\}}\hspace{-0.15cm} \cdots\hspace{-0.15cm} \sum_{U_{n} \subseteq \{1, \ldots, t_{n}\}} \hspace{-0.5cm} (-1)^{\sum_j (t_j-|U_j|)} (\Gamma, \gamma_1^{\ast U_1} , \ldots,  \gamma_n^{\ast U_n})\end{align*}
where we define $\gamma_j^{\ast U_j}= \gamma_j^{u_j^1} \ast \cdots \ast  \gamma_j^{u_j^{|U_j|}}$ if $U_j=\{u_j^1, \cdots, u_j^{|U_j|}\}$ is non-empty and $\gamma_j^{\ast \emptyset}=cst$ the constant loop if $U_j$ is empty.

Note that we can assume that all the $\gamma_j$'s are not constant since otherwise the element $(\Gamma, \langle \gamma^1_1, \ldots, \gamma^{t_1}_1\rangle, \ldots, \langle\gamma^1_n, \ldots, \gamma^{t_n}_n\rangle)$ is zero. Moreover if all $t_j$ were $1$ we recover the definition of the non-constant diagrams $\lD$ used in the previous section. Furthermore, if at least one of the $t_i$ is zero, then $(\Gamma, \langle \gamma^1_1, \ldots, \gamma^{t_1}_1\rangle, \ldots, \langle\gamma^1_n, \ldots, \gamma^{t_n}_n\rangle)$ is a partially constant diagram.

In the pictures we draw the diagram $(\Gamma, \langle \gamma^1_1, \ldots, \gamma^{t_1}_1\rangle, \ldots, \langle\gamma^1_n, \ldots, \gamma^{t_n}_n\rangle)$ like the diagram $(\Gamma, \gamma_1^1 \ast \cdots \ast \gamma_1^{t_1}, \cdots, \gamma_n^1 \ast \cdots \ast \gamma_n^{t_n})$ but adding bars at the end of each of the loops $\gamma_i^j$ (for an example see Figure \ref{fig:irredtog}). 

\begin{figure}[ht!]
     \begin{center}
     \subfigure[c]
        [The diagram $(\Gamma, \langle \gamma^1, \gamma^2\rangle)$ with $\gamma^1=\{s_1^1, s_2^1\}$ and $\gamma^2=\{s_3^1\}$]
        {\hspace{0.7cm}
            \label{fig:irred}
            \input{pic_irred} \hspace{0.2cm}
        }%
        \hspace{1.4cm}
             \subfigure
        [The diagram $(\Gamma, \langle \lambda^1, \lambda^2, \lambda^3\rangle)$ with $\lambda^1=\{s_1^1\}$, $\lambda^2=\{s_2^1\}$ and $\lambda^3=\{s_3^1\}$]
        {%
        \hspace{0.7cm}
            \label{fig:irred2}
            \input{pic_irred2}
            \hspace{0.7cm}
        }%
    \end{center}
   \caption[ Two looped diagrams depicted in the new notation]{
       Two elements of $lD(\oc{1}{0},\oc{1}{0})$ depicted in the new notation
     }
   \label{fig:irredtog}
\end{figure}

Now we restrict to those diagrams where all the subloops $\gamma_i^j$ are irreducible, i.e. cannot be written as concatenation of non-constant loops.
\begin{Def}\label{def:type}
An \emph{irreducible $\oc{p}{(m,n)}$-looped diagram} is given by an element of the form   $(\Gamma, \langle\gamma^1_1, \ldots, \gamma^{t_1}_1\rangle, \ldots, \langle\gamma^1_n, \ldots, \gamma^{t_n}_n\rangle)$ such that all the loops $\gamma_i^j$ are irreducible. 
The \emph{type} of an irreducible looped diagram $(\Gamma, \langle\gamma^1_1, \ldots, \gamma^{t_1}_1\rangle, \ldots, \langle\gamma^1_n, \ldots, \gamma^{t_n}_n\rangle)$ is defined to be the tuple $(t_1, \cdots, t_n)$.

The space spanned by irreducible looped diagrams of type $(t_1, \cdots, t_n)$ is denoted by $\lDa^{t_1, \ldots, t_{n_1}}(\oc{n_1}{m_1}, \oc{n_2}{m_2})$.
\end{Def}
The type of a looped diagram is not preserved by the differential, hence the spaces $ \lDa^{t_1, \ldots, t_{n_1}}(\oc{n_1}{m_1}, \oc{n_2}{m_2})$ are in general not chain complexes.

One checks that every $\oc{p}{(m,n)}$-looped diagram can be written as the linear combination of irreducible $\oc{p}{(m,n)}$-looped diagrams and vice versa. Hence we can rewrite the complex of looped diagrams as
\[\lDa(\oc{n_1}{m_1}, \oc{n_2}{m_2}) \cong \bigoplus_{t_1, \cdots, t_{n_1}}  \lDa^{t_1, \ldots, t_{n_1}}(\oc{n_1}{m_1}, \oc{n_2}{m_2})\]
with
\[\lDc(\oc{n_1}{m_1}, \oc{n_2}{m_2}) \cong \bigoplus_{\substack{t_1, \cdots, t_{n_1}\\\exists \ j \text{ s.t. } t_j=0}}  \lDa^{t_1, \ldots, t_{n_1}}(\oc{n_1}{m_1}, \oc{n_2}{m_2})\]
and
\[\lD(\oc{n_1}{m_1}, \oc{n_2}{m_2}) \cong \bigoplus_{\substack{t_1, \cdots, t_{n_1}\\t_j >0  \text{ for all } j}}  \lDa^{t_1, \ldots, t_{n_1}}(\oc{n_1}{m_1}, \oc{n_2}{m_2}).\]

Similarly to working out the composition for positive diagrams explicitly, we want to say a few words about the composition of irreducible looped diagrams. 
For $x$ and $y$ two irreducible looped diagrams, in $y \circ x$ all old loops of $y$ are not allowed to be empty. This means that we either have to glue an edge in there or they afterward have to be covered by a loop again. Moreover, if they were covered but no edge was glued, the diagram where this subloop is omitted has to be subtracted. For an example see Figure \ref{fig:compirred1}.
Instead of taking the direct sum over all types of irreducible looped diagrams (which as explained just is the complex of looped diagrams) we want to take the product. Unfortunately, in the product complex over all types composition is not always well-defined. Nevertheless, we will use this complex and later on deal with composition.

\begin{figure}[ht!]
     \begin{center}
%
%

            \input{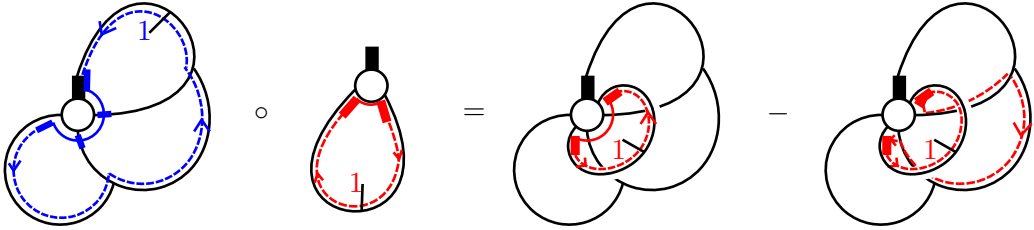}
    \end{center}
   \caption{
       composition of irreducible looped diagrams
     }
 \label{fig:compirred1}
\end{figure}

\begin{Def}\label{def:ilD}
For $n_1>0$ the chain complex of \emph{products of irreducible looped diagrams} $\ilDa(\oc{n_1}{m_1},\oc{n_2}{m_2})$ is defined as
\[ \ilDa(\oc{n_1}{m_1},\oc{n_2}{m_2})=\prod_{t_1, \cdots, t_{n_1}}  \lDa^{t_1, \ldots, t_{n_1}}(\oc{n_1}{m_1}, \oc{n_2}{m_2})\]
and
\[ \ilD(\oc{n_1}{m_1},\oc{n_2}{m_2})=\prod_{t_i>0}  \lDa^{t_1, \ldots, t_{n_1}}(\oc{n_1}{m_1}, \oc{n_2}{m_2})\]

Similarly, we define $ \ilDa_+(\oc{n_1}{m_1},\oc{n_2}{m_2})$, $ \iplDa(\oc{n_1}{m_1},\oc{n_2}{m_2})$ and $ \iplDa_+(\oc{n_1}{m_1},\oc{n_2}{m_2})$ as the products over all types restricted to these subcomplexes.
%
\end{Def}
We see that
\[\ilDa(\oc{n_1}{m_1},\oc{n_2}{m_2}) \cong \lDc(\oc{n_1}{m_1},\oc{n_2}{m_2}) \oplus \ilD(\oc{n_1}{m_1},\oc{n_2}{m_2}).\]
If $n_1=0$, we obtain
\[ \ilDa(\oc{0}{m_1},\oc{n_2}{m_2})=\iplDa(\oc{0}{m_1},\oc{n_2}{m_2})=\lDa(\oc{0}{m_1},\oc{n_2}{m_2}).\]

In order to see that the differential on $\ilDa$ and the other product complexes is well-defined, we need to check that for a fixed type $(t_1, \ldots, t_n)$ there can only be finitely many types $(t'_1, \ldots, t'_n)$ such that the differential has non-trivial elements of type $(t_1, \ldots, t_n)$.
However, one checks similarly to the computations done for non-constant diagrams that the differential is zero if an irreducible loop gets contracted and hence in the resulting summands of the differential there are at least as many irreducible loops as before. Therefore, the differential of a looped diagram of type $(t'_1, \ldots, t'_n)$ has summands of type $(t_1, \ldots, t_n)$ only if $t'_i \leq t_i$. Thus, on the product over all types, the differential is still welldefined.

\begin{Def}
For two elements $a=\sum_{t_1=1}^\infty \cdots \sum_{t_{n_1}=1}^\infty a_{t_1, \ldots, t_{n_1}} \in \ilDa(\oc{n_1}{m_1},\oc{n_2}{m_2})$ and $b=\sum_{t_1=1}^\infty \cdots \sum_{t_{n_2}=1}^\infty b_{t_1, \ldots, t_{n_2}} \in \ilDa(\oc{n_2}{m_2},\oc{n_3}{m_3})$ the pair $(a,b)$ is called \emph{composable} if
\[\sum_{t_1=1}^\infty \cdots \sum_{t_{n_1}=1}^\infty \sum_{t'_1=1}^\infty \cdots \sum_{t'_{n_2}=1}^\infty  b_{t'_1, \ldots, t'_{n_2}} \circ a_{t_1, \ldots, t_{n_1}} \]
only contains finitely many summands of type $(u_1, \ldots, u_{n_1})$ for arbitrary $u_i \in \N$.
\end{Def}

\begin{Def}\label{def:start}
Let $\plDa_{start}$ consist of those graphs in $\plDa$, where all loops consist of exactly one boundary segment of a white vertex which is the first boundary segment of that white vertex.
\end{Def}
\begin{Prop}\label{prop:composable}
Let $a \in \ilDa(\oc{n_1}{m_1},\oc{n_2}{m_2})$ and $b \in \ilDa(\oc{n_2}{m_2},\oc{n_3}{m_3})$. If one of the following conditions holds, the pair $(a,b)$ is composable:
\begin{enumerate}
\item \label{item:n1} $n_1=0$, i.e. $a \in \lDa(\oc{0}{m_1},\oc{n_2}{m_2})$ and $b$ arbitrary,
\item $b$ lies in the direct sum complex, i.e. $b \in \lDa(\oc{n_2}{m_2},\oc{n_3}{m_3})$ and $a$ is arbitrary, 
\item we have $a \in  \iplDa_{start}(\oc{n_1}{m_1},\oc{n_2}{m_2})$ and $b$ arbitrary.
\end{enumerate}
\end{Prop}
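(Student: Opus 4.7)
\medskip

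\noindent\textbf{Proof proposal.} The strategy is to analyze, in each of the three cases, how the type $(u_1,\dots,u_{n_1})$ of an irreducible summand of $b_{t'}\circ a_t$ is constrained by the input types $(t_j)$ and $(t'_i)$. A summand of $b_{t'}\circ a_t$ is a looped diagram $([G],g_1,\dots,g_{n_1})$ in which each output loop $g_j$ is obtained by pushing the input loop $\gamma_j$ through the gluing procedure of Definition~\ref{def:lD}. The key observation, following from the construction of the loops $g_j$ (the boundary segments of $\gamma_j$ are mapped injectively, preserving order, into the boundary segments of $[G]$), is that the irreducible pieces of $\gamma_j$ survive as irreducible pieces of $g_j$, possibly together with additional pieces introduced by segments of $b$'s loops that $g_j$ traverses; in particular, one always has $u_j\ge t_j$.

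For case~\ref{item:n1}, $n_1=0$ means the tuple of output types is empty, so the quantification ``for arbitrary $u_i\in\N$'' in the definition of composable ranges over no indices. The condition is therefore vacuously satisfied, and moreover the target complex $\ilDa(\oc{0}{m_1},\oc{n_3}{m_3})$ equals $\lDa(\oc{0}{m_1},\oc{n_3}{m_3})$, in which the composition is formal.

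For the second case, since $b\in\lDa(\oc{n_2}{m_2},\oc{n_3}{m_3})$ is a finite linear combination, the set $T'=\{t':b_{t'}\neq 0\}$ is finite; write $b=\sum_{t'\in T'}b_{t'}$. Fix an output type $(u_1,\dots,u_{n_1})$. By the lower bound $u_j\ge t_j$ established above, any $a_t$ contributing an irreducible summand of output type $(u_j)$ in some $b_{t'}\circ a_t$ must satisfy $t_j\le u_j$ for every $j$, so the contributing $(t_j)$ range over a finite set. Each $b_{t'}\circ a_t$ is itself a finite sum, so the union over the finitely many $t'\in T'$ and finitely many contributing $t$ yields only finitely many summands of the prescribed output type.

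For case~(3), the assumption $a\in\iplDa_{start}(\oc{n_1}{m_1},\oc{n_2}{m_2})$ implies that each loop of $a$ is a single first-position boundary segment $\gamma_j=\{s_1^{f(j)}\}$, so $a$ is concentrated in type $(1,\dots,1)$ and in particular is a finite linear combination; the non-trivial work is to bound the $(t'_i)$ that can contribute for a given output type. The plan is to analyze the output loop $g_j$ explicitly: after composition the start half-edge of $v_{f(j)}$ is identified with the $f(j)$-th labeled leaf of $b$ and the first non-start half-edge of $v_{f(j)}$ is glued at a position along $\gamma'_{f(j)}$; the loop $g_j$ then consists of the portion of $\gamma'_{f(j)}$ from the leaf side up to that gluing point, and hence $u_j$ equals the number of traversed segments, which is at most $t'_{f(j)}$. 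Moreover, positivity of the loop forces the direction of traversal so that this count determines how far along $\gamma'_{f(j)}$ the first non-start half-edge of $v_{f(j)}$ lies. Fixing $(u_1,\dots,u_{n_1})$ thus bounds each $t'_i$ in the image of $f$ from below by the appropriate $u_j$; for indices $i$ outside the image of $f$, the positive boundary and positive orientation, together with the fact that no loop of $a$ passes through $v_i$, forces the contribution to collapse to finitely many choices by controlling how many half-edges of $v_i$ can be accommodated in $\gamma'_i$ for each fixed resulting graph combinatorics.

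The main obstacle is case~(3): establishing the precise dependence of $u_j$ on $t'_{f(j)}$ (and ruling out unbounded contributions from indices $i\notin f(\{1,\dots,n_1\})$) requires a careful combinatorial inspection of the gluing procedure, using both the positive orientation of the loops of $a$ and the first-segment property to ensure that for each fixed output type only finitely many types $t'$ in $b$ produce irreducible summands.
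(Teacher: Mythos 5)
Your treatment of case (1) contains a genuine gap. When $n_1=0$ the quantifier ``for arbitrary $u_i\in\N$'' does not become empty: it ranges over the one-point set $\N^0$, i.e.\ over the single empty tuple, and since every summand of the composition has the empty type, the condition asks precisely that the total sum $\sum_{t'}b_{t'}\circ a$ have only finitely many nonzero summands. That is not vacuous, and your appeal to the target being the direct sum complex $\lDa(\oc{0}{m_1},\oc{n_3}{m_3})$ begs the question --- one must show the composition actually lands there, even though $b$ may have nonzero components $b_{t'}$ for infinitely many types $t'$. The paper's argument here is substantive: since $a$ has no loops, none of the $\sum_i t'_i$ irreducible loops of $b_{t'}$ can be covered by a loop of $a$, so each of them must receive at least one glued half-edge of $a$ for $b_{t'}\circ a$ to be nonzero; as $a$ has only $|a|$ half-edges to distribute, $b_{t'}\circ a=0$ whenever $\sum_i t'_i>|a|$, which bounds the set of contributing types $t'$. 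You need to supply this (or an equivalent) counting argument.

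Your case (2) is essentially the paper's proof: finiteness of the set of types occurring in $b$, combined with the lower bound $u_j\ge t_j$ on the output type (which you correctly isolate as the key structural fact about how irreducible loops survive composition), gives the result. For case (3) you offer only a plan and honestly flag the remaining combinatorial work; note that the paper itself declines to prove this case, calling it ``particularly technical'' and observing it is not used later, so your sketch is in the same position --- but it should not be presented as a completed argument, and in particular the asserted bound $u_j\le t'_{f(j)}$ and the treatment of white vertices $v_i$ with $i$ outside the image of $f$ remain unverified.
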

Note that for $a \in \lDa(\oc{n_1}{m_1},\oc{n_2}{m_2})$ and $b \in \lDa(\oc{n_2}{m_2},\oc{n_3}{m_3})$ their composition is just the composition $b \circ a$ by definition.
\begin{proof}
If $n_1=0$ by definition $a$ is contained in $\lDa(\oc{0}{m_1},\oc{n_2}{m_2})$ and thus it is a finite sum of diagrams in this complex. It is sufficient to show that for $x$ a looped diagram in $ \lDa(\oc{0}{m_1},\oc{n_2}{m_2})$ and $y \in \lDa(\oc{n_2}{m_2},\oc{n_3}{m_3})$ there are only finitely many types $(t'_1, \ldots, t'_{n_2})$ the diagram $y$ can have such that the composition $y \circ x$ is non-trivial. We have $(\sum_{i=1}^{n_2} t'_i)$ irreducible loops in $y$. By the observations we made earlier, we know, that we either have to glue an edge into each of these loops or cover it with a loop of $x$. Since $x$ does not have any loops, we have to glue at least one edge of $x$ in into each irreducible loop for the composition to be non-trivial. There are only $|x|$ edges which get glued to $y$, thus the composition is trivial whenever $(\sum_{i=1}^{n_2} t'_i)>|x|$.

If $n_1 \neq 0$ we show that for a given type $(t_1, \ldots, t_{n_1})$ there are only finitely many types $(t_1, \ldots, t_{n_1})$ and $(t'_1, \ldots, t'_{n_2})$ such that the compositions $( b_{t'_1, \ldots, t'_{n_2}} \circ a_{t_1, \ldots, t_{n_1}})$ have summands of type $(u_1, \ldots, u_{n_1})$. In the second case by assumption only finitely many types occur in $b$, i.e. we only need to show that for an arbitrary looped diagram $y$ there are only finitely many types a looped diagram $x$ can have, such that $y \circ x$ has type $(u_1, \ldots, u_{n_1})$. However, for a looped diagram $x$ of type $(t_1, \ldots, t_{n_1})$ the type of the composition $y \circ x$ is bounded below by $(t_1, \ldots, t_n)$. Therefore, in the $(u_1, \ldots, u_{n_1})$ component of the composition, we can only have elements resulting from the composition of $a_{t_1, \ldots, t_{n_1}}$ with $t_i \leq u_i$.

The proof of the last case is particularly technical and difficult to explain. Since the fact is not used later on, we omit the proof.
\end{proof}

The later proposition and the associativity of $\circ$ imply that the union of morphism spaces $\coprod_{n_1, m_1}\ilDa(\oc{n_1}{m_1},\oc{n_2}{m_2})$ is a left $\iplDa_{start}$-module and $\coprod_{n_2, m_2} \ilDa(\oc{n_1}{m_1},\oc{n_2}{m_2})$ a right $\lDa$-module.

\section{The natural operations for commutative Frobenius algebras}\label{sec:natcom}
\subsection{The category of commutative Frobenius algebras}\label{sec:comfrob}
This paper deals with commutative Frobenius algebras, so we start with recalling definitions on the subject.

\begin{Def}
A\emph{ Frobenius algebra} $A$ is given by a finite-dimensional vector space equipped with the following data:
\begin{itemize}
\item a multiplication $m: A \otimes A \to A$ and a unit $\1_A: \k \to A$ such that $m$ and $\1_A$ define an algebra structure on $A$
\item a comultiplication $\Delta: A \to A \otimes A$ and a counit $\eta: A \to \k$ such that they define a coalgebra structure on $A$
\end{itemize}
satisfying the so called Frobenius relation $\Delta \circ m= (m \otimes \id) \circ (\id \otimes \Delta)= (\id \otimes m) \circ (\Delta \otimes \id)$.
If $A$ is a chain complex, we obtain \emph{Frobenius dg-algebras}. 

We denote the twist map $A \otimes A \to A \otimes A$ by $
\tau$. 

A Frobenius algebra is called \emph{symmetric} if $\eta \circ m \circ \tau=\eta \circ m$ and it is \emph{commutative} if $ m \circ \tau= m$. A commutative Frobenius algebra is cocommutative, i.e. $\tau \circ \Delta= \Delta$.

An \emph{open Frobenius algebra} is a Frobenius algebra without a counit. It is cocommutative if $\tau \circ \Delta= \Delta$. In this case commutativity does not imply cocommutativity (but we will only work with commutative cocommutative open Frobenius algebras).

\end{Def}

The open cobordism category $\O$ was defined in \cite[section 2.6]{wahl11} to be the dg-category with objects the natural numbers and morphism $\O(n,m)$ the chain complex of oriented fat graphs with $n+m$ labeled leaves (for the definition see Section \ref{sec:graphs}), i.e. $\O(n,m)=\oc{0}{m+n}$--Graphs. The category $sFr=H_0(\O)$ is the category with the same objects but with cobordisms $sFr(n,m)=H_0(\O)(n,m):=H_0(\O(n,m))$. This chain complex consists of trivalent graphs module the sliding relation (cf. Figure \ref{fig:equiv}), i.e. $sFr(n,m)=\oc{0}{m+n}$--Sullivan diagrams, so these are Sullivan diagrams without white vertices. A split symmetric monoidal functor $\Phi:H_0(\O) \to \Ch$, i.e. an $sFr$-algebra is an open TQFT and by \cite[Cor 4.5]{Laud2008} these algebras are precisely the symmetric Frobenius dg-algebras. Usually one would pass to the closed cobordism category to deal with commutative Frobenius algebras, but we instead want to continue working with graphs (as it is for example done in \cite[Chapter 3]{kock2004}). Adding the commutativity relation is equivalent to forgetting the ordering of the edges at the vertices. Thus the PROP $cFr$ of commutative Frobenius algebras can be defined to have objects the natural numbers and morphisms $cFr(m_1,m_2)=\lDa(\oc{0}{m_1}, \oc{0}{m_2})=H_0(\O)(m_1,m_2)/\sim \ = \oc{0}{m_1+m_2}$--commutative Sullivan diagrams. So a commutative Frobenius dg-algebra is a strict symmetric monoidal functor from $cFr$ to chain complexes.
Forgetting the counit is equivalent to restricting to diagrams with the positive boundary condition (i.e. forcing every connected component to have an output). Thus we can define the PROP $cFr_+$ of commutative cocommutative open Frobenius algebras to have morphism spaces $cFr_+(m_1,m_2)=\lDa_+(\oc{0}{m_1}, \oc{0}{m_2})$. Hence a commutative cocommutative open dg-Frobenius algebra is a strict symmetric monoidal functor $cFr_+ \to \Ch$.

Moreover, we also have a graded version of commutative Frobenius algebras where the comultiplication has degree $d$ and the counit has degree $-d$. Analogously to \cite[Section 6.3]{wahl11} for symmetric Frobenius algebras, the shifted PROP $cFr_d$ then agrees with the PROP where we shifted the commutative Sullivan diagrams $\Gamma$ by $-d \cdot \chi(\Gamma, \partial_{out})$ (as defined in the end of Section \ref{sec:comsul}), i.e. $cFr_d(m_1,m_2)=\lDa_d(\oc{0}{m_1}, \oc{0}{m_2})$.

\subsection{Formal operations}\label{sec:formal}
\subsubsection{Definitions of the Hochschild complex and formal operations}
Let $\e$ be a PROP with a multiplication, i.e. a dg-PROP with a functor $Ass \to \e$ which is the identity on objects. We define $m^k_{i,j} \in \e(k, k-1)$ to be the image of the map in $Ass(k,k-1)$ which multiplies the $i$--th and $j$--th input and is the identity on all other elements.

We recall the definitions of the Hochschild and coHochschild constructions of functors from \cite[Section 1]{wahl12}.

For $\Phi: \e \to \Ch$ a dg-functor the \emph{Hochschild complex }of $\Phi$ is the functor $C(\Phi): \e \to \Ch$ defined by
\[ C(\Phi)(n)=\bigoplus_{k \geq 1} \Phi(k+n)[k-1]. \]
The differential is the total differential of the differential on $\Phi$ and the differential coming from the simplicial abelian group structure with boundary maps $d_i=\Phi(m^k_{i+1, i+2} + \id_n)$ where we set $m^k_{k,k+1}=m^k_{k,1}$ and degeneracy maps induced by the map inserting a unit at the $i+1$--st position.%

The \emph{reduced Hochschild complex} $\bC(\Phi)(n)$ is the reduced chain complex associated to this simplicial abelian group, i.e. the quotient by the image of the degeneracies.%

Iterating this construction, the functors $C^{(n,m)}(\Phi)$ and $\bC^{(n,m)}(\Phi)$ are given by 
\begin{align*}C^{(n,m)}(\Phi):=C^n(\Phi)(m) &&\text{and} &&\bC^{(n,m)}(\Phi):=\bC^n(\Phi)(m).\end{align*}
Working out the definitions explicitly, one sees that
\[C^{(n,m)}(\Phi) \cong \bigoplus_{j_1 \geq 1, \cdots, j_n \geq 1} \Phi(j_1+ \cdots+ j_n + m)[j_1+ \cdots + j_n -n].\]

The category of $\e$--algebras is equivalent to strict symmetric monoidal functors $\Phi: \e \to \Ch$, sending an algebra $A$ to the functor $A^{\otimes -}$. For an algebra $A$, the Hochschild complex $C(A^{\otimes -})(0)$ is the ordinary Hochschild complex $C_*(A,A)$ (and similarly for the reduced complexes). Furthermore, we have an isomorphism
\begin{equation} \label{eq:mon} C^{(n,m)}(A^{\otimes -}) \cong C_*(A,A)^{\otimes n} \otimes A^{\otimes m}\end{equation}
natural in all $\e$--algebras $A$.

Dually, given a dg-functor $\Psi: \e^{op} \to \Ch$ its \emph{CoHochschild complex} is defined as
\[ D(\Psi(n))= \prod_{k \geq 1} \Psi(k+n)[1-k]\] 
with the differential coming from the cosimplicial structure induced by the multiplications and the inner differential on $\Psi$. Again, we can take the reduced cochain complex $\bD(\Psi)(n)$. By \cite[Prop. 1.7 + 1.8]{wahl12}, the inclusion $\bD(\Psi) \to D(\Psi)$ and the projection $C(\Phi) \to \bC(\Psi)$ are quasi-isomorphisms. 

We can also spell out the iterated construction explicitly, i.e. for a functor $\Psi: \e^{op} \to \Ch$ we get
\[D^n(\Psi)(m) \cong \prod_{j_1, \cdots, j_n} \Psi(j_1 + \cdots + j_n +m)[n-(j_1+ \cdots + j_n)].\]

The complex of formal operations $\Nat_\e(\oc{n_1}{m_1},\oc{n_2}{m_2})$ is defined as all maps
\[C^{(n_1,m_1)}(\Phi) \to C^{(n_2,m_2)}(\Phi)\]
%
natural in all functors $\Phi: \e \to \Ch$.

In \cite[Theorem 2.1]{wahl12} it is shown that  
\[\Nat_\e(\oc{n_1}{m_1},\oc{n_2}{m_2}) \cong D^{n_1}C^{n_2}(\e(-,-))(m_2)(m_1),\]
which is used to compute the complex of formal operations explicitly.

Instead of testing on all functors $\Phi: \e \to \Ch$ we could test on strict symmetric monoidal functors only and denote the operations obtained this way by $\Nat_\e^\otimes(\oc{n_1}{m_1},\oc{n_2}{m_2})$. Via the isomorphism in equation \eqref{eq:mon} a transformation in $\Nat_\e^\otimes(\oc{n_1}{m_1},\oc{n_2}{m_2})$ corresponds to an operation
\[C_*(A,A)^{\otimes n_1} \otimes A^{\otimes m_1} \to C_*(A,A)^{\otimes n_2} \otimes A^{\otimes m_2}\]
natural in all $\e$--algebras $A$, so in other words it is a natural transformation of the Hochschild complex. Since every transformation in $\Nat_\e(\oc{n_1}{m_1},\oc{n_2}{m_2})$ is in particular natural in all strict symmetric monoidal functors, we have a restriction map $\rho: \Nat_\e(\oc{n_1}{m_1},\oc{n_2}{m_2}) \to \Nat_\e^\otimes(\oc{n_1}{m_1},\oc{n_2}{m_2})$, so every formal operation gives us a natural operations of the Hochschild complex of $\e$--algebras. In general we do not know whether this map is injective or surjective (for more details on this matter see \cite[Section 2.2]{wahl12}).

\subsubsection{Formal operations for commutative Frobenius algebras}
We now focus on the case where $\e=cFr$. Using the definitions of the previous section, we can describe the complexes $\bC^n(cFr(m_1,-))(m_2)$ and $\bC^n(cFr_+(m_1,-))(m_2)$ as follows:
\begin{Lemma}\label{le:natlooped}
There are isomorphisms
\[\bC^n(cFr(m_1,-))(m_2) \cong \lDa(\oc{0}{m_1}, \oc{n}{m_2}) = \oc{n}{m_1+m_2}-cSD\]
and
\[\bC^n(cFr_+(m_1,-))(m_2) \cong \lDa_+(\oc{0}{m_1}, \oc{n}{m_2}).\]
\end{Lemma}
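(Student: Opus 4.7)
The plan is to reduce to the case $n=1$ by iteration, and for $n=1$ to set up an explicit bijection between the Hochschild complex and commutative Sullivan diagrams with one white vertex, then check that boundary maps and degeneracies match.

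For $n=1$, the degree-$(k-1)$ piece of $C(cFr(m_1,-))(m_2)$ is $cFr(m_1, k+m_2)\cong \oc{0}{m_1+k+m_2}\text{-cSD}$. First I would define a map to the space of $\oc{1}{m_1+m_2}$-commutative Sullivan diagrams with white vertex of valence $k$: given a $\oc{0}{m_1+k+m_2}$--commutative Sullivan diagram $\Gamma$, attach its first $k$ labeled leaves (in order) as the half-edges of a new white vertex, taking the leaf labeled $1$ as the start half-edge, and relabel the remaining $m_1+m_2$ leaves in the obvious way so that the first $m_1$ are the ``input side'' leaves and the last $m_2$ are the outer outputs. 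On the level of generators this is a bijection onto the subspace of $\oc{1}{m_1+m_2}$--commutative Sullivan diagrams whose unique white vertex has valence exactly $k$, and ranging over $k\ge 1$ gives a bijection with the whole complex $\oc{1}{m_1+m_2}\text{-cSD}$.

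Next I would check that this bijection intertwines the simplicial structure. The boundary $d_i$ post-composes $\Gamma$ with the multiplication $m^k_{i+1,i+2}$, which replaces the $(i+1)$-st and $(i+2)$-nd labeled leaves by a trivalent black vertex whose new outgoing half-edge becomes the new $(i+1)$-st labeled leaf. Under the above bijection this is precisely the blow-up at the white vertex that defines the simplicial structure in Lemma \ref{le:chcplx}: split off the consecutive pair $\{h_{i+1},h_{i+2}\}$ into a trivalent black vertex attached to the white vertex by a single new half-edge. The degeneracy $s_i$ inserts a unit leaf between positions $i$ and $i+1$, which corresponds to adding an unlabeled leaf between the same two half-edges of the white vertex; hence the quotient by degenerate elements in $\bC$ corresponds exactly to the equivalence relation ``forget unlabeled leaves that are not start half-edges'' built into the definition of $\oc{p}{m}$--graphs. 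Sign compatibility comes from the alternating simplicial sign being the same as the alternating sign assigned to the blow-ups at a white vertex using the canonical orientation. Iterating the construction $n$ times successively builds white vertices $v_1,\dots,v_n$ with the correct ordering (since distinct iterations act on disjoint layers of inputs and thus commute), yielding $\bC^n(cFr(m_1,-))(m_2)\cong \oc{n}{m_1+m_2}\text{-cSD}=\lDa(\oc{0}{m_1},\oc{n}{m_2})$.

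For the positive boundary variant, recall that $cFr_+(m_1,k+m_2)\subseteq cFr(m_1,k+m_2)$ is the subcomplex of diagrams in which every connected component contains at least one of the $k+m_2$ outputs. Under the bijection above, outputs among the first $k$ become half-edges of the white vertex and outputs among the last $m_2$ remain as outer labeled leaves, so the positive boundary condition translates exactly into the condition of Definition \ref{def:posbdry}: every connected component contains a white vertex or a leaf labeled among the last $m_2$. Iterating gives the isomorphism for $\lDa_+(\oc{0}{m_1},\oc{n}{m_2})$. The main obstacle I expect is the careful verification that signs match and that the iteration genuinely yields white vertices $v_1<\cdots<v_n$ with the start half-edge conventions of Section \ref{sec:comsul} lining up with the simplicial ordering on $\bC^n$; everything else is straightforward bookkeeping from the definitions.
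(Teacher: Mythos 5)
Your proposal is correct and is essentially the proof the paper has in mind: the paper only says the statement is "a direct analog of [Lemma 6.1, WW11] in the commutative setting and the proof works completely similar," and that proof is exactly your construction of wrapping the $k$ outputs of a $\oc{0}{m_1+k+m_2}$--diagram into a white vertex of valence $k$, matching $d_i$ with the blow-ups, degeneracies with unlabeled non-start leaves, and the positive boundary conditions on both sides. The only nitpick is a labeling convention (the $k$ leaves absorbed into the white vertex are the first $k$ \emph{outputs}, i.e.\ leaves $m_1+1,\dots,m_1+k$, not the first $k$ leaves overall), which your relabeling step already accounts for.
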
 
This is a direct analog of \cite[Lemma 6.1]{wahl11} in the commutative setting and the proof works completely similar.

Applying the coHochschild construction $n_1$ times, we can describe the formal operations for $cFr$ and $cFr_+$ via
\begin{align*}\bNat_{cFr}(\oc{n_1}{m_1},\oc{n_2}{m_2}) &\simeq D^{n_1}(\bC^{n_2}(cFr(-,-)(m_2))(m_1)\\&\cong \prod_{j_1, \cdots, j_{n_1}} \lDa(\oc{0}{j_1+ \cdots + j_{n_1}+m_1}, \oc{n_2}{m_2})[n_1-\Sigma j_i]\end{align*}
and
\[\bNat_{cFr_+}(\oc{n_1}{m_1},\oc{n_2}{m_2}) \simeq \prod_{j_1, \cdots, j_{n_1}} \lDa_+(\oc{0}{j_1+ \cdots + j_{n_1}+m_1}, \oc{n_2}{m_2})[n_1-\Sigma j_i].\]

Since every commutative Frobenius algebra is in particular a commutative cocommutative open Frobenius algebra, we have an induced inclusion $\bNat_{cFr_+}(\oc{n_1}{m_1},\oc{n_2}{m_2}) \hookrightarrow \bNat_{cFr}(\oc{n_1}{m_1},\oc{n_2}{m_2})$. Under the above equivalences, this inclusion corresponds to the inclusions of the subcomplexes $\lDa_+(\oc{0}{j_1+ \cdots + j_{n_1}+m_1}, \oc{n_2}{m_2}) \hookrightarrow \lDa(\oc{0}{j_1+ \cdots + j_{n_1}+m_1}, \oc{n_2}{m_2})$.

The composition in $\bNat_{cFr}$ (and thus also in $\bNat_{cFr_+}$) is described  in terms of the right hand side as follows:

For $\Gamma \in \prod_{j_1, \cdots, j_{n_1}} \oc{n_2}{j_1+ \cdots + j_{n_1}+m_1+m_2}-cSD$ and $\Gamma' \in \prod_{j_1, \cdots, j_{n_2}} \oc{n_3}{j_1+ \cdots + j_{n_2}+m_2+m_3}-cSD$ we get
$(\Gamma' \circ \Gamma )_{j_1, \ldots, j_{n_1}}$ by attaching a summand $G$ in $(\Gamma)_{j_1, \ldots, j_{n_1}}$ which has $n_2$ white vertices with each $k_1, \ldots, k_{n_2}$ half-edges, to the element $(\Gamma')_{k_1, \ldots, k_{n_2}}$. This is done by taking away the white vertices from $G$ and gluing the $k_1+ \cdots + k_{n_2}$ half-edges onto the according labeled leaves of $(\Gamma')_{k_1, \ldots, k_{n_2}}$.

Before we move on, we want to say a few words about how to view an element in $x \in \prod_{j_1, \cdots, j_{n_1}} \lDa(\oc{0}{j_1+ \cdots + j_{n_1}+m_1}, \oc{n_2}{m_2})$ as an operation on commutative Frobenius dg-algebras, i.e. how to extract an operation
\[CC_*(A,A)^{\otimes n_1} \otimes A^{m_1} \to CC_*(A,A)^{\otimes n_2} \otimes A^{m_2}.\]
We fix a tuple $(j_1, \cdots, j_{n_1})$ and an element 
\[(a_0^1 \otimes \cdots \otimes a_{j_1}^1) \otimes \cdots \otimes (a_0^{n_1} \otimes \cdots \otimes a_{j_{n_1}}^{n_1}) \otimes b_1 \otimes \cdots \otimes b_{m_1}\hspace{-0.1cm} \in \hspace{-0.1cm} CC_{j_1}(A,A) \otimes \cdots \otimes  CC_{j_n}(A,A) \otimes A^{m_1}.\]
To get the resulting element in  $CC_*(A,A)^{\otimes n_2} \otimes A^{m_2}$, we need to consider $x_{j_1+1, \cdots, j_{n_1}+1} \in \lDa(\oc{0}{j_1+1+ \cdots + j_{n_1}+1+m_1}, \oc{n_2}{m_2})$ which is a linear combination of $\oc{n_2}{j_1+1+ \cdots + j_{n_1}+1+m_1+m_2}$--looped diagrams. We start with writing $a_0^1, \cdots, a_{j_1}^1$ on the first $j_1+1$--leaves and continue by putting the other $a_i^k$ following their order. Then we write the $b_k$ on the leaves labeled $j_1+1+ \cdots + j_{n_1}+1+1$ to $j_1+1+ \cdots + j_{n_1}+1+m_1$. We put units on all unlabeled leaves. Now we have $m_2$ labeled leaves where we have not written an element of $A$ on. We view these and the half-edges attached to the white vertices as ends of the graph for a moment. Reading the black vertices of the diagram as multiplications and comultiplications, we obtain a linear combination of the diagram where we assigned values in $A$ to all ends of the graph (i.e. to the remaining $m_2$ leaves and all half-edges attached to the white vertices).

We read off our element in $CC_*(A,A)^{\otimes n_2} \otimes A^{m_2}$ by starting with the white vertices: Each white vertex corresponds to one copy of $CC_*(A,A)$. In the procedure described above, an element of $A$ is assigned to each half-edge attached to a white vertex. If the white vertex has degree $k$, i.e. $k+1$ edges attached to with labels $c_0, \cdots c_k$, the resulting element lies in $CC_k(A,A)$ and is given by $c_0 \otimes c_k$. The elements assigned to the $m_2$ leaves give the resulting elements in $A^{\otimes m_2}$.
\begin{figure}[!ht]

 \center
\input{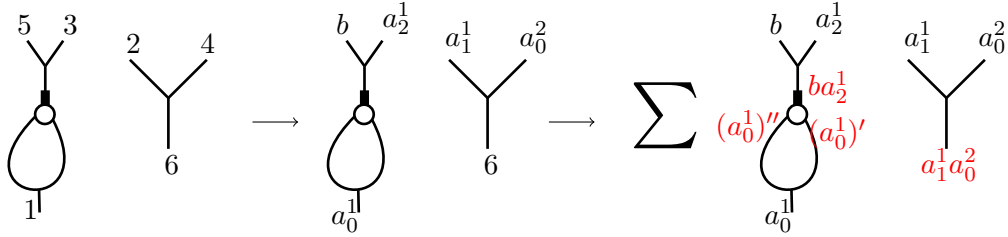}
\caption{How to read off operations}
	\label{fig:getelement}
\end{figure}

In Figure \ref{fig:getelement} we have illustrated how to evaluate an element of $\lDa(\oc{0}{3+1+1}, \oc{1}{1})$, as part of an operation in $\Nat_{cFr}(\oc{2}{1},\oc{1}{1})$, on an element $(a_0^1 \otimes a_1^1 \otimes a_2^1) \otimes (a_0^2) \otimes b \in CC_2(A,A) \otimes CC_0(A,A) \otimes A$. The sum in the last picture is the sum coming from the comultiplication of $a_0^1$ using Sweedler's notation $\Delta(a_0^1)= \sum (a_0^1)'\otimes (a_0^1)''$. The element we read off is $(\sum ba_2^1 \otimes (a_0^1)'\otimes (a_0^1)'') \otimes a_1^1 a_0^2 \in CC_2(A,A) \otimes A$.

\subsubsection{Splitting off zero chains}
It is well-known that for a commutative algebra $A$ the Hochschild chains $C_*(A,A)$ split as $C_0(A,A) \oplus C_{>0}(A,A)$. This relies on the fact that $d(a_0 \otimes a_1)=a_0 \cdot a_1- (-1)^{|a_0||a_1|} a_1 \cdot a_0=0$ by the commutativity of $A$. It generalizes to the Hochschild complex of functors, since we already get $d_0=- d_1 \in \Com(2,1)$ and thus $d=0 \in \Com(2,1)$. Hence for $\Phi: \Com \to \Ch$ the differential on degree one of the Hochschild complex $C_*(\Phi)$ is trivial and we get a splitting $C_*(\Phi) \cong C_0(\Phi) \oplus C_{>0}(\Phi)$. This generalizes to the iterated complex $C^{(n,m)}(\Phi)\cong \bigoplus_{j_1 \geq 1, \cdots, j_n \geq 1} \Phi(j_1+ \cdots+ j_n + m)$ which we therefore can rewrite as:
\[C^{(n,m)}(\Phi)=\bigoplus_{S \subseteq \{1, \cdots, n\}}\bigoplus_{j_i \geq 2, i\notin S} \Phi(j_1+ \cdots+ j_n + m)\]
with $j_i=1$ if $i \in S$.

For each $S \subseteq \{1, \cdots, n\}$ with $|S|=k$ the sum $\bigoplus_{j_i \geq 2, i\notin S} \Phi(j_1+ \cdots+ j_n + m)$ with $j_i=1$ for $i \in S$ is isomorphic to $\bigoplus_{j_{r_i} \geq 2} \Phi(j_{r_1}+ \cdots+ j_{r_{n-k}} +k+ m)$ given by relabeling those $j_i$ with $i \notin S$ to $j_{r_l}$ and moving the $j_i$ with $i =1$ to the end (with a sign involved). Defining $C^{>0,(n,m)}_\e:=\bigoplus_{j_1 \geq 2, \cdots, j_n \geq 2} \Phi(j_1+ \cdots+ j_n + m)$, we see that \[\bigoplus_{j_{r_i} \geq 2} \Phi(j_{r_1}+ \cdots+ j_{r_{n-k}} +k+ m) \cong C^{>0,(n-k,m+k)}(\Phi)\] and hence we get an isomorphism%
\[C^{(n,m)}(\Phi)\cong \bigoplus_{S \subseteq \{1, \cdots, n\}} C^{>0,(n-|S|,m+|S|)}(\Phi).\] 

Defining $\Nat^{>0}_\e(\oc{n_1}{m_1},\oc{n_2}{m_2}):=\hom_\e(C^{>0,(n_1,m_1)}(-), C^{(n_2,m_2)}(-))$ we conclude
\begin{align*}
\Nat_\e(\oc{n_1}{m_1},\oc{n_2}{m_2})&=\hom_\e(C^{(n_1,m_1)}(-), C^{(n_2,m_2)}(-))\\
&=\bigoplus_{S \subseteq \{1, \cdots, n_1\}}\hom_\e(C^{>0,(n_1-|S|,m_1+|S|)}(-), C^{(n_2,m_2)}(-))\\
&=\bigoplus_{S \subseteq \{1, \cdots, n_1\}} \Nat^{>0}_\e(\oc{n_1-|S|}{m_1+|S|},\oc{n_2}{m_2}).
\end{align*}
The same works for the reduced Hochschild construction and reduced natural transformations, i.e.
\[\bNat_\e(\oc{n_1}{m_1},\oc{n_2}{m_2})=\bigoplus_{S \subseteq \{1, \cdots, n_1\}} \bNat^{>0}_\e(\oc{n_1-|S|}{m_1+|S|},\oc{n_2}{m_2}).\]

Similarly, for a functor $\Psi: \e^{op} \to \Ch$ we define $D^{>0,n}(\Psi)(m):= \prod_{j_i>2} \Psi(j_1+ \cdots + j_{n}+m)$. Going through the proof of \cite[Theorem 2.1]{wahl12} one sees that
$\bNat^{>0}_{\e}(\oc{n_1}{m_1},\oc{n_2}{m_2}) \cong \bD^{>0,n_1}(\bC^{n_2}(\e(-,-)(m_2))(m_1) \simeq D^{>0,n_1}(\bC^{n_2}(\e(-,-)(m_2))(m_1)$.

Using the positive coHochschild construction, we can identify the subcomplex $\Nat_{cFr}^{>0}$ via the following:
\[\bNat_{cFr}^{>0}(\oc{n_1}{m_1},\oc{n_2}{m_2}) \simeq \prod_{j_1>1, \cdots, j_{n_1}>1} \lDa(\oc{0}{j_1+ \cdots + j_{n_1}+m_1}, \oc{n_2}{m_2})[n_1-\Sigma j_i].\]


\subsection{Building operations out of looped diagrams}\label{sec:build}

We describe a dg-functor from $\lDa$ to $\bNat_{cFr}$ which is the identity on objects. Thus  we assign an operation on commutative Frobenius dg-algebras to every looped diagram. In the two sections afterward we will show that this actually covers interesting operations.

Recall the graph $l_j \in \lDa(\oc{0}{j}, \oc{1}{0})$ defined in Section \ref{sec:graphs} given by a single white vertex with $j$ leaves attached to it and let $\id_{m_1} \in \lDa(\oc{0}{m_1}, \oc{0}{m_1})$ be the identity element, so we have $(l_{j_1} \amalg \cdots \amalg l_{j_{n_1}} \amalg \id_{m_1})\} \in \lDa(\oc{0}{j_1+ \cdots + j_{n_1}+m_1}, \oc{n_1}{m_1})$ of degree $\sum j_i-n_1$.

\begin{Theorem} \label{Th:buildop}
There is a functor of dg-categories
\[J:\lDa \to \bNat_{cFr}\]
which is the identity on objects and sends a looped diagram $G \in \lDa(\oc{n_1}{m_1}, \oc{n_2}{m_2})$ to $(G \circ (l_{j_1} \amalg \cdots \amalg l_{j_{n_1}} \amalg \id_{m_1}))_{j_1, \cdots, j_{n_1}} \in \prod_{j_1, \cdots, j_{n_1}} \lDa(\oc{0}{j_1+ \cdots + j_{n_1}+m_1}, \oc{n_2}{m_2})[n_1-\Sigma j_i]$.
The functor restricts to functors
\begin{align*} \lDa_+ \to \bNat_{cFr_+}, &&\lD \to \bNat^{>0}_{cFr}&&\text{and}&&\lD_+ \to \bNat^{>0}_{cFr_+}.\end{align*}
\end{Theorem}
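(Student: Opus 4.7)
The plan is to define $J$ on objects as the identity and on morphisms by
\[J(G)_{j_1,\ldots,j_{n_1}} \;=\; G \circ (l_{j_1} \amalg \cdots \amalg l_{j_{n_1}} \amalg \id_{m_1})\]
inside $\lDa$, and then to verify the three properties: well-definedness (including degrees), the chain-map property, and strict functoriality. First I would check the degrees: each $l_{j_i}$ has degree $j_i-1$ (single white vertex of valence $j_i$), so the disjoint union has degree $\sum j_i - n_1$, and the composite has degree $|G| + \sum j_i - n_1$, which matches the shift $[n_1 - \sum j_i]$ on the $(j_1,\ldots,j_{n_1})$--slot of $\bNat_{cFr}(\oc{n_1}{m_1},\oc{n_2}{m_2})$ computed from the explicit formula for $D^{n_1}\bC^{n_2}$ given earlier.

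The chain map property follows from the graded Leibniz rule for composition in $\lDa$ together with the identification of the internal differential of $l_j$ with the cosimplicial boundary. More precisely, the differential on the target splits as an inner part (the differential on each $\lDa(\oc{0}{\sum j_i + m_1},\oc{n_2}{m_2})$) and a cosimplicial part (the alternating sum of $\Phi(m^k_{i+1,i+2} + \id)$). Under the gluing definition of composition, $dG \circ (\amalg l_{j_i} \amalg \id_{m_1})$ reproduces the inner part, while $(-1)^{|G|} G \circ d(\amalg l_{j_i} \amalg \id_{m_1})$ reproduces the cosimplicial part, because blowing up the white vertex of $l_j$ between consecutive leaves $i$ and $i+1$ produces $l_{j-1}$ with those two leaves glued through a trivalent black vertex --- which is exactly the picture for $m^j_{i+1,i+2}$ in the commutative Sullivan world.

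For functoriality I would apply associativity of composition in $\lDa$ together with the explicit recipe for composition in $\bNat_{cFr}$ recalled right after Lemma~\ref{le:natlooped}. Writing $J(G' \circ G)_{j_1,\ldots,j_{n_1}} = (G' \circ G) \circ (\amalg l_{j_i} \amalg \id_{m_1})$ and reordering, one obtains $G' \circ (G \circ (\amalg l_{j_i} \amalg \id_{m_1}))$; for each summand of $G \circ (\amalg l_{j_i} \amalg \id_{m_1})$ whose $n_2$ white vertices have valences $(k_1,\ldots,k_{n_2})$, composing further with $G'$ precisely glues that summand to $J(G')_{k_1,\ldots,k_{n_2}} = G' \circ (\amalg l_{k_l} \amalg \id_{m_2})$ via the prescription ``remove the white vertices and attach their half-edges to the labeled leaves of $G'$'' --- i.e.\ exactly the composition rule in $\bNat_{cFr}$.

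The restrictions require short additional arguments. For $\lDa_+ \to \bNat_{cFr_+}$, both $l_j$ and $\id_{m_1}$ satisfy the positive boundary condition, and this condition is preserved under $\circ$ in $\lDa_+$. For $\lD \to \bNat^{>0}_{cFr}$ I would show that $J(x)_{j_1,\ldots,j_{n_1}}$ vanishes whenever some $j_i = 1$: writing $x = \widehat{x} - p_{cst}(\widehat{x})$, composition with $l_1$ in slot $i$ contributes no new half-edges around the loop $\gamma_i$, so the image of $\widehat{x} \circ l_1$ and of $p_i(\widehat{x}) \circ l_1$ produce the same underlying commutative Sullivan diagram, and the non-constant part cancels. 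The combined case $\lD_+ \to \bNat^{>0}_{cFr_+}$ then follows. I expect the main obstacle to be the careful sign bookkeeping in the chain-map verification, specifically matching the cosimplicial alternating sign with the sign assigned to the blow-up $d(l_j)$ under the canonical orientation convention on trivalent graphs and with the sign rule for $\circ$ described in Definition~\ref{def:lD}.
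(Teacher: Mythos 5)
Your proposal is correct and follows essentially the same route as the paper: $J$ is defined by composition with $l_{j_1}\amalg\cdots\amalg l_{j_{n_1}}\amalg\id_{m_1}$, functoriality comes from associativity of $\circ$ in $\lDa$ together with the explicit composition recipe in $\bNat_{cFr}$, and the restrictions follow from the positive boundary condition being preserved and from the cancellation of the non-constant part against $p_{cst}$ when some $j_i=1$. In fact you supply more detail than the paper does on the chain-map property (matching $d(l_j)$ with the cosimplicial boundary $m^j_{i+1,i+2}$) and on the restriction to $\lD$, both of which the paper's proof leaves implicit, only verifying preservation of identities and of composition.
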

\begin{proof}
We need to prove that the above map preserves the identity and composition:

The former follows from the fact that the element $l_{j_1} \amalg \cdots \amalg l_{j_{n_1}} \amalg \id_{m_1}$ is the identity in $\bNat^{>0}_{cFr}(\oc{n_1}{m_1}, \oc{n_1}{m_1})$. This is also illustrated in Example \ref{ex:id}.

Let $x \in \lDa(\oc{n_1}{m_1}, \oc{n_2}{m_2})$ and $y \in \lDa(\oc{n_2}{m_2}, \oc{n_3}{m_3})$, so $J(x) \in \prod_{j_1, \cdots, j_{n_1}} \lDa(\oc{0}{j_1+ \cdots + j_{n_1}+m_1}, \oc{n_2}{m_2})$ and $J(y) \in \prod_{j_1, \cdots, j_{n_2}} \lDa(\oc{0}{j_1+ \cdots + j_{n_2}+m_2}, \oc{n_3}{m_3})$ (with shifted degrees). More precisely, we get $J(x)$ from $x$ by gluing the $j_i$ leaves along the loop $\gamma_i$ in all possible order preserving ways and similar for $J(y)$.
In the composition $J(y) \circ J(x)$  we glue a term in $J(x)$ which at the white vertex $v_i$ has degree $d_i$ onto $J(y)_{(d_1+1, 
\cdots, d_{n_2}+1)}$. This is the same as first gluing all edges of $x$ in all possible ways onto the loops in $y$ and then the leaves onto the new loops, which is exactly what $J(y \circ x)$ does.
\end{proof} 
The above theorem provides us with a big family of operations. Unfortunately, these do not cover all operations we know, in particular not all operations coming from operations on commutative algebras (which will be investigated in Section \ref{sec:com}). The next theorem provides a bigger set of operations, but we have to be more careful with composition, since as seen earlier, $\ilDa$ is not a category anymore.

\begin{Theorem}\label{Th:buildop2}
We have dg-maps
\[J_{cFr}:\ilDa(\oc{n_1}{m_1}, \oc{n_2}{m_2}) \to \bNat_{cFr}(\oc{n_1}{m_1}, \oc{n_2}{m_2})\]
and
\[J_{cFr_+}:\ilDa_+(\oc{n_1}{m_1}, \oc{n_2}{m_2}) \to \bNat_{cFr_+}(\oc{n_1}{m_1}, \oc{n_2}{m_2})\]
preserving the composition of composable objects.
\end{Theorem}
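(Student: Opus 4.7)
The plan is to extend the functor $J$ of Theorem \ref{Th:buildop} by the \emph{same formula}, now allowed to take input from the product complex $\ilDa$. Define
\[
J_{cFr}(G) \;=\; \bigl(G \circ (l_{j_1} \amalg \cdots \amalg l_{j_{n_1}} \amalg \id_{m_1})\bigr)_{j_1, \ldots, j_{n_1}}
\]
for $G \in \ilDa(\oc{n_1}{m_1}, \oc{n_2}{m_2})$, viewed as an element of the product description of $\bNat_{cFr}(\oc{n_1}{m_1}, \oc{n_2}{m_2})$ given after Lemma \ref{le:natlooped}. The first thing to check is that each coordinate is genuinely well-defined. Since $l_{j_1} \amalg \cdots \amalg l_{j_{n_1}} \amalg \id_{m_1}$ lies in $\lDa(\oc{0}{\sum j_i + m_1}, \oc{n_1}{m_1})$, Proposition \ref{prop:composable}(\ref{item:n1}) shows that it is composable with any element of $\ilDa(\oc{n_1}{m_1}, \oc{n_2}{m_2})$. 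More explicitly, a summand of $G$ of type $(t_1,\ldots,t_{n_1})$ can contribute to the $(j_1,\ldots,j_{n_1})$-coordinate only when $t_i \leq j_i$ for all $i$: each irreducible subloop around the $i$-th white vertex must receive at least one freshly glued leaf for the composition not to vanish (otherwise the loop is contracted to a constant loop on a vertex with more than one incident edge). Hence the $(j_1,\ldots,j_{n_1})$-coordinate is always a finite sum, and $J_{cFr}(G)$ makes sense as a product.

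Next I would verify that $J_{cFr}$ is a chain map. Since the differential on both $\ilDa$ and $\bNat_{cFr}$ is the product of the corresponding differentials on each factor, it suffices to observe that composition with the fixed element $l_{j_1} \amalg \cdots \amalg l_{j_{n_1}} \amalg \id_{m_1}$ is a chain map into $\lDa(\oc{0}{\sum j_i + m_1}, \oc{n_2}{m_2})$; this is immediate from the fact that composition in $\lDa$ is a chain map. Restricting to $\ilDa_+$ gives the map $J_{cFr_+}$: the positive boundary condition is preserved by gluing leaves, because the property of having a white vertex or an outgoing leaf in every connected component is preserved by composition with $l_{j_i}$ and $\id_{m_1}$.

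For the compatibility with composition, let $(x,y)$ be a composable pair with $x \in \ilDa(\oc{n_1}{m_1},\oc{n_2}{m_2})$ and $y \in \ilDa(\oc{n_2}{m_2},\oc{n_3}{m_3})$. Working one coordinate $(j_1,\ldots,j_{n_1})$ at a time, the argument reduces to the equality already proven in Theorem \ref{Th:buildop}: on the one hand,
\[
J_{cFr}(y\circ x)_{(j_1,\ldots,j_{n_1})} \;=\; (y\circ x)\circ (l_{j_1}\amalg\cdots\amalg l_{j_{n_1}}\amalg \id_{m_1}),
\]
and on the other, $\bigl(J_{cFr}(y)\circ J_{cFr}(x)\bigr)_{(j_1,\ldots,j_{n_1})}$ unpacks via the composition rule of $\bNat_{cFr}$ to the sum, over choices of valences $k_1+1,\ldots,k_{n_2}+1$ at the white vertices of a summand of $x\circ(l_{j_1}\amalg\cdots)$, of the gluing of this summand into $y\circ(l_{k_1}\amalg\cdots\amalg l_{k_{n_2}}\amalg \id_{m_2})$. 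Associativity of $\circ$ in $\lDa$ identifies the two expressions, exactly as in the proof of Theorem \ref{Th:buildop}: gluing the $j_i$ leaves to $x$ first and then composing with $y$ is the same as first composing and then gluing, because the loops of $x$ are recorded precisely by the locations where leaves of the $l_{j_i}$ can be inserted. The composability assumption is what guarantees that both sides lie in the product complex in the first place.

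The main obstacle is bookkeeping: one must make sure that at each coordinate $(j_1,\ldots,j_{n_1})$ only finitely many summands of $x$ and $y$ interact, so that the coordinate-wise equality above is between finite sums. This is guaranteed once by the finiteness argument above for $J_{cFr}(x)$ and $J_{cFr}(y)$ individually, and once (for the $(y\circ x)$-side) by the definition of composability of $(x,y)$. No new composition rule is introduced — the functor is essentially Theorem \ref{Th:buildop} applied factorwise, and the work done in Sections \ref{sec:nonconst}--\ref{sec:type} ensures that this factorwise extension passes through the infinite product.
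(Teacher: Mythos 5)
Your proposal is correct and follows essentially the same route as the paper: well-definedness of each coordinate via the bound relating the type $(t_1,\ldots,t_{n_1})$ to the Hochschild degrees $(j_1,\ldots,j_{n_1})$ (the paper states the slightly sharper necessary condition $j_i>t_i$, since only the $j_i-1$ non-start half-edges of $l_{j_i}$ are distributed over the $t_i$ irreducible subloops), and compatibility with composition by expanding over types and invoking Theorem \ref{Th:buildop} summand-wise. The extra checks you include (chain-map property, preservation of the positive boundary condition) are routine verifications the paper leaves implicit.
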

\begin{proof}
We need to show that the map is well-defined, i.e. that an infinite sum $a=\sum_{t_1=1}^\infty \cdots \sum_{t_{n_1}=1}^\infty a_{t_1, \ldots, t_{n_1}} \in \ilD(\oc{n_1}{m_1},\oc{n_2}{m_2})$ is taken to a well-defined element in the complex $ \prod_{j_1, \cdots, j_{n_1}} \lDa(\oc{0}{j_1+ \cdots + j_{n_1}+m_1}, \oc{n_2}{m_2})$. So we show that for a fixed tuple $(j_1, \cdots, j_{n_1})$ only finitely many of the $J(a_{t_1, \ldots, t_{n_1}})_{(j_1, \cdots, j_{n_1})}$ are non-zero. In the composition with a diagram of type $(t_1, \cdots, t_{n_1})$ none of the $\sum t_i$ loops can be empty and by composing with $l_i$'s only leaves are glued on. Hence for the composition to be non-zero we need that $j_i> t_i$. Thus the claim is shown.
We only need to see that composition of composable elements is preserved. In order to do so we use that all operations in $\Nat_{cFr}$ are composable, so if two elements $a=\sum_{t_1=1}^\infty \cdots \sum_{t_{n_1}=1}^\infty a_{t_1, \ldots, t_{n_1}} \in \ilDa(\oc{n_1}{m_1},\oc{n_2}{m_2})$ and $b=\sum_{t'_1=1}^\infty \cdots \sum_{t'_{n_2}=1}^\infty b_{t'_1, \ldots, t'_{n_2}} \in \ilDa(\oc{n_2}{m_2},\oc{n_3}{m_3})$ are composable then 
\begin{align*} 
J(b \circ a) &= J\left(\sum_{(t_1, \cdots, t_{n_1})} \sum_{(t'_1, \cdots, t'_{n_2})}b_{t'_1, \ldots, t'_{n_2}} \circ a_{t_1, \ldots, t_{n_1}}\right)\\
&=\sum_{(t_1, \cdots, t_{n_1})} \sum_{(t'_1, \cdots, t'_{n_2})}J\left(b_{t'_1, \ldots, t'_{n_2}} \circ a_{t_1, \ldots, t_{n_1}}\right)\\
&=\sum_{(t_1, \cdots, t_{n_1})} \sum_{(t'_1, \cdots, t'_{n_2})}J\left(b_{t'_1, \ldots, t'_{n_2}}\right) \circ J\left(a_{t_1, \ldots, t_{n_1}}\right)
\end{align*}
so the composition agrees.
\end{proof}

\begin{Remark}[Operations of type $(t_1, \cdots, t_n)$]\label{rem:optype}
At this point we want to explain how the map $J_\Com$ actually acts on an element of type $(t_1, \cdots, t_{n_1})$. 
There is an easy way to read off the operation of such an element without going back to the original definition of the type. 
For a general element $x=(\Gamma, \langle\gamma^1_1, \ldots, \gamma^{t_1}_1\rangle, \ldots, \langle\gamma^1_{n_1}, \ldots, \gamma^{t_{n_1}}_{n_1}\rangle) \in \ilDa(\oc{n_1}{m_1}, \oc{n_2}{m_2})$ of type $(t_1, \cdots, t_{n_1})$ the composition with $(l_{j_1} \amalg \cdots \amalg l_{j_{n_1}} \amalg \id_{m_1})$ is trivial if there is a $j_i < t_i$ and is given by all possible ways of (for each $i$) gluing the $j_i$ labeled leaves along the $\gamma_i^r$ (respecting the order of the leaves and the loops) such that we glued at least one leaf to each $\gamma_i^r$ (and gluing the $m_1$ extra leaves as usual). In particular the image $J_{cFr}(x) \in \Nat(\oc{n_1}{m_1}, \oc{n_2}{m_2})$ acts trivial on all Hochschild degrees $(j_1, \ldots, j_{n_1})$ with $j_i< t_i$ for some $i$.
\end{Remark}

\subsection{Connection to non-commutative operations}\label{sec:con}
The analog of the functor $J$ has been defined in the context of symmetric Frobenius algebras in \cite[Section 3]{wahl12}. There, it was even shown to be a split quasi-isomorphism of complexes:
\begin{Theorem}[{\cite[Theorem 3.8]{wahl12}}]
The functor \[J_{H_0}: \SD \to \Nat_{sFr}\] defined by sending a graph $G$ to $(G \circ (l_{j_1} \amalg \cdots \amalg l_{j_{n_1}} \amalg \id_{m_1}))_{j_1, \cdots, j_{n_1}}$ is a split quasi-isomorphism.
\end{Theorem}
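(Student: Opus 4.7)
The plan is to show that $J_{H_0}$ fits into a diagram which identifies it with a combinatorial comparison map between two descriptions of the same derived object, and then to exhibit an explicit splitting together with a contracting homotopy. First, I would unpack the target: by the analog of Lemma \ref{le:natlooped} for symmetric Frobenius algebras (i.e.\ the identification $\bC^n(sFr(m_1,-))(m_2) \cong \SD(\oc{0}{m_1},\oc{n}{m_2})$ obtained by iterating the Hochschild construction on the white-vertex side), together with the general coHochschild computation of formal operations, one gets
\[\bNat_{sFr}(\oc{n_1}{m_1},\oc{n_2}{m_2}) \;\simeq\; \prod_{j_1,\dots,j_{n_1}} \SD\!\left(\oc{0}{j_1+\cdots+j_{n_1}+m_1},\,\oc{n_2}{m_2}\right)[n_1-\Sigma j_i].\]
Under this identification the functor $J_{H_0}$ sends a Sullivan diagram $G\in\SD(\oc{n_1}{m_1},\oc{n_2}{m_2})$ to the collection whose $(j_1,\dots,j_{n_1})$-component is obtained by gluing $j_i$ unlabeled leaves (the leaves of $l_{j_i}$) along the boundary cycle of the $i$-th input white vertex of $G$ in all cyclically ordered ways.

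Next I would produce the splitting. Define $\pi:\bNat_{sFr}(\oc{n_1}{m_1},\oc{n_2}{m_2})\to \SD(\oc{n_1}{m_1},\oc{n_2}{m_2})$ to be the projection onto the minimal multi-index component $(j_1,\dots,j_{n_1})=(1,\dots,1)$. Since $l_1$ is the single white vertex with one leaf, $G\circ (l_1\amalg\cdots\amalg l_1\amalg \id_{m_1})=G$, so the relation $\pi\circ J_{H_0}=\id_{\SD}$ is immediate. It remains to prove that $J_{H_0}\circ \pi$ is chain homotopic to the identity; equivalently, that the complement of the image of $J_{H_0}$ (the product over $(j_1,\dots,j_{n_1})\neq (1,\dots,1)$, taken modulo the image) is acyclic.

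For the acyclicity, my plan is to use the filtration by the multi-degree $|j|:=j_1+\cdots+j_{n_1}$, so that on each associated graded piece the differential is only the boundary that contracts consecutive leaves attached to white vertices (all other pieces of $d$ strictly decrease $|j|$). Per white vertex $v_i$ one sees the standard reduced chain complex of a cyclic object: after fixing the position of one distinguished leaf (coming from the image of $J_{H_0}$), the remaining leaves attached to $v_i$ are parametrized by a simplex whose reduced complex is contractible. Concatenating these contractions across white vertices produces an explicit simplicial contraction $h$, built by ``inserting a unit leaf next to the start half-edge'' at one of the white vertices, which gives a nullhomotopy of $\id-J_{H_0}\pi$ on each filtration step. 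Running the associated spectral sequence (which converges because the filtration is bounded below for each fixed total Hochschild degree) then shows that $E^1$ is concentrated in the image of $J_{H_0}$, completing the proof.

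The main obstacle is the construction and verification of the contracting homotopy: the differential on Sullivan diagrams mixes contractions of leaves at white vertices with the sliding relation and the boundaries coming from blowing up black vertices, and one needs to arrange the homotopy so that it commutes with passage to commutative Sullivan diagrams (i.e.\ respects the quotient by reorderings at black vertices). Concretely, the explicit homotopy $h$ would add a labeled leaf at the start position of a chosen white vertex and the nontrivial check is that $dh+hd$ cancels all terms outside the image of $J_{H_0}$ up to the filtration step; this is where the simplicial identities recalled in Lemma~\ref{le:chcplx} (for its Sullivan-diagram analog) are essential. Once this homotopy is established on each filtration quotient, the splitting and homotopy on the full complex follow by assembling these pieces, yielding the split quasi-isomorphism.
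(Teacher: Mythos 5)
First, a remark on scope: the paper does not prove this statement — it is quoted verbatim from \cite[Theorem 3.8]{wahl12} and used as a black box (only its compatibility with $K$ and $J$ is checked afterwards in Proposition \ref{prop:commsym}) — so there is no in-paper argument to measure yours against. That said, your overall strategy (identify $\bNat_{sFr}(\oc{n_1}{m_1},\oc{n_2}{m_2})$ with the iterated coHochschild complex $\prod_{j}\SD(\oc{0}{j_1+\cdots+j_{n_1}+m_1},\oc{n_2}{m_2})[n_1-\Sigma j_i]$, exhibit $\SD(\oc{n_1}{m_1},\oc{n_2}{m_2})$ as a retract, and contract the complement) is the right shape and is consistent with how \cite{wahl12} proceeds.

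The concrete gap is in your splitting. The $(1,\dots,1)$-component of $J_{H_0}(G)$ is \emph{not} $G$: composing with $l_1$ removes the white vertex of $l_1$, identifies its start half-edge with the $i$-th labeled leaf of $G$, and then attaches the remaining labeled leaf of $l_1$ at some position along the $i$-th boundary cycle of $G$, \emph{summed over all admissible positions} (cf. step (3)(a) in Definition \ref{def:lD}; the uniqueness in Example \ref{ex:id} is special to $\id_{\oc{1}{0}}$, whose boundary cycle has a single segment). Moreover the $(1,\dots,1)$-component of the product is $\SD(\oc{0}{n_1+m_1},\oc{n_2}{m_2})$, which is strictly larger than $\SD(\oc{n_1}{m_1},\oc{n_2}{m_2})$ (the ``sole labeled leaf in its boundary cycle'' condition is lost), so projecting onto that component does not even land in the source of $J_{H_0}$, and $\pi\circ J_{H_0}=\id$ fails. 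The retraction has to be manufactured from the natural direct-summand decomposition of the Hochschild complex of the PROP itself — the split identification underlying the symmetric analog of Lemma \ref{le:natlooped} — which is precisely the ``Hochschild splitting'' hypothesis of \cite[Theorem 2.9]{wahl12}; this is the missing ingredient, not a formality. Two further real issues: your contracting homotopy ``insert a unit leaf'' is a degeneracy and hence vanishes on the reduced complex unless it is specifically the \emph{extra} (co)degeneracy adjacent to the start half-edge, and the identity $dh+hd=\id-J_{H_0}\pi$ — the entire content of the argument — is asserted rather than checked; and the spectral sequence of the multidegree filtration on a \emph{product} complex requires a completeness/$\varprojlim^1$ argument, not merely boundedness below of the filtration in each total degree.
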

Using the functor $sFr \to cFr$ which induces a functor $\Nat_{sFr} \to \Nat_{cFr}$ and recalling the functor $K: \SD \to \lDa$ defined at the end of Section \ref{sec:comsul} (cf. Figure \ref{fig:funcK}), one checks that the definitions were made to make the following proposition true:
\begin{Prop}\label{prop:commsym}
The diagram
\[ \xymatrix{ \SD \ar[d]^-{K} \ar[r]^-{J_{H_0}}& \Nat_{sFr} \ar[d] \\ \lDa \ar[r]^-{J}& \Nat_{cFr} } \]
commutes.
\end{Prop}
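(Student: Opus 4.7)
The plan is to reduce the commutativity of the square to the functoriality of $K:\SD\to\lDa$ together with a direct computation of its value on the elementary pieces $l_{j_i}$ and $\id_{m_1}$.

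First, I will use Lemma \ref{le:natlooped} and its symmetric counterpart from \cite{wahl11} to identify
\[
\bNat_{sFr}(\oc{n_1}{m_1},\oc{n_2}{m_2}) \simeq \prod_{j_1,\ldots,j_{n_1}} \SD(\oc{0}{\textstyle\sum j_i + m_1}, \oc{n_2}{m_2}),
\]
\[
\bNat_{cFr}(\oc{n_1}{m_1},\oc{n_2}{m_2}) \simeq \prod_{j_1,\ldots,j_{n_1}} \lDa(\oc{0}{\textstyle\sum j_i + m_1}, \oc{n_2}{m_2}).
\]
Under these identifications, the right-hand vertical map, induced by the surjection of PROPs $sFr\twoheadrightarrow cFr$, is given coordinate-wise by the quotient $\SD(\oc{0}{\cdot},\oc{n_2}{m_2})\to\lDa(\oc{0}{\cdot},\oc{n_2}{m_2})$. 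On diagrams with $n_1=0$ (no input white-vertex leaves), this quotient coincides with the restriction of $K$, since there are no boundary cycles that must be promoted to loops.

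Given this, both paths around the square produce an element of the product complex whose $(j_1,\ldots,j_{n_1})$-component is, respectively, $K(G\circ L)$ computed in $\SD$ and then pushed through $K$, versus $K(G)\circ L'$ computed in $\lDa$; here $L=l_{j_1}\amalg\cdots\amalg l_{j_{n_1}}\amalg \id_{m_1}$ is understood in $\SD$ and $L'$ is the analogous element in $\lDa$. Commutativity of the square therefore reduces to the equality $K(G\circ L)=K(G)\circ L'$. Assuming $K$ is a dg-functor that respects $\amalg$, this follows once one checks that $K(\id_{m_1})=\id_{m_1}$, which is immediate, and that $K(l_{j_i})=l_{j_i}$. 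The latter is a direct inspection: the Sullivan diagram $l_{j_i}$ has a single boundary cycle beginning at the labeled leaf and sweeping once around the white vertex, so by the construction of $K$ it acquires the loop $(s_1^1,\ldots,s_{j_i}^1)$, which is precisely the distinguished loop used to define $l_{j_i}\in\lDa(\oc{0}{j_i},\oc{1}{0})$ in the statement of Theorem \ref{Th:buildop}.

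The main obstacle, and the only nontrivial step, is the functoriality $K(G\circ H)=K(G)\circ K(H)$ itself; this was asserted after the definition of $K$ at the end of Section \ref{sec:comsul}, but deserves verification. The point is that the composition rule in $\SD$, which inserts the half-edges of the source into the cyclic order at the white vertices of the target, transforms boundary cycles in a prescribed way: the boundary cycle through the $i$-th labeled input leaf of $G\circ H$ is obtained by traversing the corresponding boundary cycle of $G$ and, whenever one passes a former white vertex of $G$, running through the half-edges of $H$ glued there in cyclic order. This is exactly the rule by which composition in $\lDa$ constructs the induced loops $g_i$ from the loops $\gamma_i'$ of the target and the cyclic data of the source. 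Matching these two descriptions segment by segment gives $K(G\circ H)=K(G)\circ K(H)$, and the proof is complete.
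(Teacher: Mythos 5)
Your argument is, in substance, exactly the check the paper leaves to the reader (its ``proof'' is the single sentence that the definitions were made to make the proposition true): identify both $\bNat$-complexes with products of diagram complexes via the iterated (co)Hochschild construction, observe that the vertical map is coordinate-wise the quotient $\SD(\oc{0}{\cdot},\oc{n_2}{m_2})\to\lDa(\oc{0}{\cdot},\oc{n_2}{m_2})$, and reduce the square to the functoriality of $K$ together with $K(l_{j_i})=l_{j_i}$ and $K(\id_{m_1})=\id_{m_1}$; your identification of the functoriality of $K$ as the one genuinely nontrivial input is also right. One correction to your justification of $K(l_{j_i})=l_{j_i}$: as a morphism in $\lDa(\oc{0}{j_i},\oc{1}{0})$ the element $l_{j_i}$ is a $\oc{1}{(j_i,0)}$--looped diagram, so it carries \emph{no} loops (its source has $n_1=0$), and likewise $K$ applied to $l_{j_i}\in\SD(\oc{0}{j_i},\oc{1}{0})$ promotes no boundary cycle to a loop; the equality holds for the trivial reason that on morphisms out of $\oc{0}{m}$ the functor $K$ is just the quotient forgetting the cyclic orderings at the black vertices. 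The ``distinguished loop $(s_1^1,\ldots,s_{j_i}^1)$'' you invoke belongs to the identity element $\id_{\oc{1}{0}}$ of Figure \ref{fig:id1}, not to $l_{j_i}$. This slip does not affect the conclusion, and the rest of the argument stands.
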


\begin{Remark}
Everything done so far works also in the shifted setup, in particular we get a functor
\[J:\lDa_d \to \Nat_{cFr_d}\]
and a commutative diagram
\[ \xymatrix{ \SD_d \ar[d]^-{K} \ar[r]^-{J_{H_0}}& \Nat_{sFr_d} \ar[d] \\ \lDa_d \ar[r]^-{J}& \Nat_{cFr_d}. } \]
\end{Remark}
\subsection{First examples of operations and relations}\label{sec:exop}
Before we investigate some subcomplexes of $\lDa$ more systematically, we want to represent some of the known operations on the Hochschild homology of commutative Frobenius algebras by looped diagrams and apply the new tools to easily prove a relation between some of them.
\begin{figure}[ht!]  \begin{center}
\subfigure
       [The identity]
        {\hspace{1cm}
          
            \input{pic_id}\hspace{1cm}
            \label{fig:id}
        }%
        \subfigure
       [The image of the identity under $J$]
        {\hspace{1cm}          
            \input{pic_idcomp}\hspace{1cm}           
           \label{fig:idcomp}
        }%
    \end{center}
   \caption[The identity element]{
       The element $\id_{\oc{1}{0}} \in \lD_+(\oc{1}{0}, \oc{1}{0})$ and its image in $\Nat_{cFr_+}(\oc{1}{0}, \oc{1}{0})$
     }
   \label{fig:idall}
\end{figure}
\begin{Example}[Identity]\label{ex:id}
As seen before, the diagram $\id_{\oc{1}{0}} \in \lD_+(\oc{1}{0}, \oc{1}{0})$ shown in Figure \ref{fig:id} corresponds to the identity in $\Nat_{cFr_+}(\oc{1}{0}, \oc{1}{0})$. In order to see this in pictures, we spell out the map $J:\lD_+(\oc{1}{0}, \oc{1}{0}) \to \Nat_{cFr_+}(\oc{1}{0}, \oc{1}{0})$ explicitly. By definition in degree $j$ we have $J(\id_{\oc{1}{0}})_{j}= \id_{\oc{1}{0}} \circ l_j$. There is only one way to glue the edges of $l_j$ onto $\id_{\oc{1}{0}} $, so the resulting diagram in $\lDa_+(\oc{0}{j}, \oc{1}{0})$ is shown in Figure \ref{fig:idcomp}, where we labeled the leaves of $l_j$ by $a_1, \cdots, a_j$. Given a Hochschild chain $a_1 \otimes \cdots \otimes a_j$ the image of the operation is now given by reading off around the white vertex. Thus we get $a_0 \otimes \cdots \otimes a_j$ back.
\end{Example}

\begin{figure}[ht!]  \begin{center}
\subfigure
       [The shuffle product $pr$]
        {\hspace{1cm}
          
            \input{pic_pr}\hspace{1cm}
            \label{fig:pr}
        }%
        \subfigure
       [$J(pr)_{2,3}=pr \circ (l_2 \amalg l_3)$]
        {\hspace{1cm}          
            \input{pic_prcomp}\hspace{1cm}           
           \label{fig:prcomp}
        }%
    \end{center}
   \caption[The shuffle product]{
       The element $pr \in \lD_+(\oc{2}{0}, \oc{1}{0})$ and the degree $(2,3)$ part of its image in $\Nat_{cFr_+}(\oc{2}{0}, \oc{1}{0})$
     }
   \label{fig:prall}
\end{figure}
\begin{Example}[Shuffle product]
The operation $pr \in \lD_+(\oc{2}{0}, \oc{1}{0})$ shown in Figure \ref{fig:pr} is the shuffle product on the Hochschild homology. The composition  $pr \circ (l_{j_1} \amalg l_{j_2})$ glues the first labeled leaves of $l_{j_1}$ and $l_{j_2}$ onto the start half-edge of $pr$ and all other edges around the white vertex keeping the cyclic ordering of the edges coming from $l_{j_1}$ and the cyclic ordering of the edges coming from $l_{j_2}$. Thus it produces all shuffles of these edges and hence corresponds to the shuffle product on the Hochschild chains. The example is illustrated in Figure \ref{fig:prcomp} for $j_1=2$ and $j_2=3$, where we again already labeled the leaves by $a_i$ and $b_i$ to give a clearer understanding of the final operation.
\end{Example}

\begin{figure}[ht!]
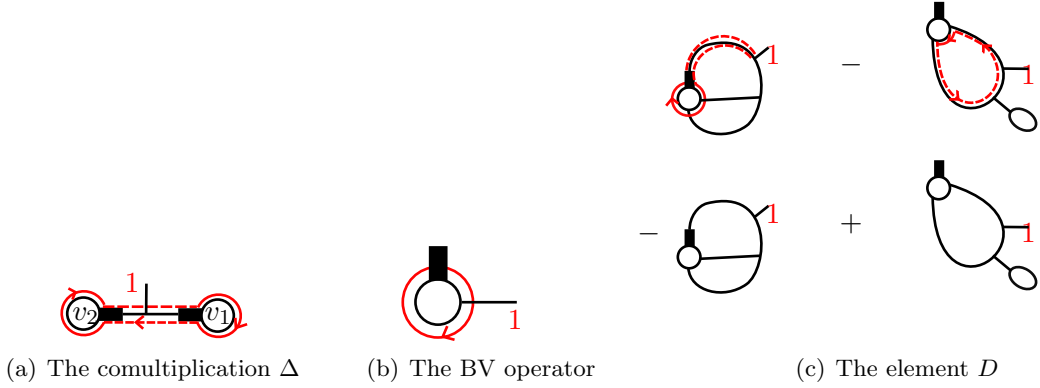
  \begin{center}
\subfigure
       [The  comultiplication $\Delta$]
        {\hspace{1cm}
          
            \input{pic_coprod}\hspace{1cm}
            \label{fig:coprod}
        }%
                \subfigure
       [The  BV operator]
        {\hspace{0.5cm}          
            \input{pic_BV}\hspace{1cm}           
           \label{fig:BV}
        }%
        \subfigure
       [The  element $D$]
        {       
            \input{pic_D1}\hspace{1cm}           
           \label{fig:D}
        }%
    \end{center}
   \caption{
       The comultiplication, the BV-operator and the boundary element $D$
     }
   \label{fig:elD}
\end{figure}
Recall from the introduction that for a $1$--connected closed oriented manifold, we have an isomorphism $HH_*(C^{-*}(M), C^{-*}(M)) \cong H^{-*}(LM)$. On $H^{-*}(LM)$ we have a coproduct (the dual of the Chas-Sullivan product) and a BV-operator. On the other hand, working with coefficients in $\Q$, by \cite{lamb07} there is a commutative Frobenius algebra $A$ of degree $d$ for $d=\dim M$ such that we have a weak equivalence $C^*(M) \simeq A$ and hence $HH_*(C^{-*}(M), C^{-*}(M)) \cong HH_*(A^{-*}, A^{-*})$. Since $A^{-*}$ is a commutative Frobenius algebra of degree $-d$, we have an action of $\lDa_{-d}$ on $HH_*(A^{-*}, A^{-*})$ and can show:
\begin{Prop}[BV-structure on $H^*(LM, \Q)$]
Working with coefficients in $\Q$, the coBV structure on $HH_*(C^{-*}(M), C^{-*}(M)) \cong HH_*(A^{-*}, A^{-*})$, induced via the above isomorphism by the dual of the Chas-Sullivan product and the BV-operator on $H^{-*}(LM, \Q)$, is generated by the operations $J(\Delta)\in \Nat_{cFr_{-d}}(\oc{1}{0},\oc{2}{0})$ and $J(B) \in \Nat_{cFr_{-d}}(\oc{1}{0},\oc{1}{0})$ with $\Delta \in \lDa_{-d}(\oc{1}{0},\oc{2}{0})$ and $B \in \lDa_{-d}(\oc{1}{0},\oc{1}{0})$ the shifted versions of the diagrams illustrated in Figure \ref{fig:coprod} and Figure \ref{fig:BV}.
\end{Prop}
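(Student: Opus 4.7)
The plan is to identify the two generators of the coBV structure on $HH_*(A^{-*}, A^{-*}) \cong H^{-*}(LM,\Q)$ with the chain-level operations $J(B)$ and $J(\Delta)$ by evaluating each diagram as an operation and matching with known models on the loop-space side.

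For the BV operator, I would first read off the operation $J(B)$ explicitly. Following the evaluation recipe of Section \ref{sec:formal} (illustrated in Figure \ref{fig:getelement}), composing the diagram $B$ of Figure \ref{fig:BV} with $l_{j+1}$ and reading off around the unique white vertex gives, up to sign, the cyclic sum
\[ J(B)(a_0 \otimes \cdots \otimes a_j) \;=\; \sum_{i=0}^{j} \pm\, 1 \otimes a_i \otimes \cdots \otimes a_j \otimes a_0 \otimes \cdots \otimes a_{i-1}, \]
which is precisely Connes' $B$-operator on the Hochschild complex. By Jones' theorem, Connes' $B$ is intertwined by $HH_*(C^{-*}(M), C^{-*}(M)) \cong H^{-*}(LM,\Q)$ with the $S^1$-action on the free loop space, and this $S^1$-action is by definition the BV operator of the Chas--Sullivan structure. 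Combining with the formality isomorphism $HH_*(A^{-*}, A^{-*}) \cong HH_*(C^{-*}(M), C^{-*}(M))$ then identifies $J(B)$ with the BV operator on $H^{-*}(LM,\Q)$.

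For the coproduct, I would evaluate $J(\Delta)$ using the two-circle looped diagram of Figure \ref{fig:coprod}. Gluing $l_{j+1}$ onto $\Delta$ along the loop visiting both white vertices and reading off around each vertex produces the formula
\[ J(\Delta)(a_0 \otimes \cdots \otimes a_j) \;=\; \sum_{i} \pm \,(a_0' \otimes a_1 \otimes \cdots \otimes a_i) \otimes (a_0'' \otimes a_{i+1} \otimes \cdots \otimes a_j), \]
where $\Delta_A(a_0) = \sum a_0' \otimes a_0''$ is the Frobenius coproduct on $A$. This agrees with the chain-level formula for the shifted coproduct studied by Abbaspour in \cite[Section~7]{abba13} and \cite[Section~6]{abba13b}, which is shown there to correspond to the dual of the Chas--Sullivan product on $H^{-*}(LM,\Q)$ under the Jones/formality isomorphism.

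The main obstacle is matching the diagrammatic chain-level formula to the dual Chas--Sullivan coproduct: chain models for this coproduct are notoriously delicate, and several inequivalent-looking formulas appear in the literature. Fortunately, Abbaspour's construction is built from exactly the same Frobenius-algebra data that the diagram $\Delta$ encodes, so the task reduces to a careful comparison of signs and conventions between \cite{abba13, abba13b} and the looped-diagram description developed in Sections \ref{sec:comsul}--\ref{sec:build}. Once this identification is in place, the proposition follows from the functoriality of $J$ established in Theorem \ref{Th:buildop}.
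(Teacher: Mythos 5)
Your route for the $B$-part is fine in outline: reading off $J(B)$ as Connes' boundary operator and invoking Jones' theorem to identify it with the $S^1$-action, hence the BV operator, is a legitimate (if more computational) alternative to what the paper does. But the coproduct part has a genuine gap. You propose to identify $J(\Delta)$ with the dual Chas--Sullivan product by matching its explicit formula against ``the shifted coproduct studied by Abbaspour in \cite[Section 7]{abba13} and \cite[Section 6]{abba13b}, which is shown there to correspond to the dual of the Chas--Sullivan product.'' That conflates two different operations. In this paper the Abbaspour references are attached to the \emph{product} $\mu$ of Figure \ref{fig:prod}, which is part of the \emph{desuspended} BV-structure and is only \emph{conjectured} (Conjecture \ref{conj:GH}) to correspond to the Goresky--Hingston product --- explicitly not the Chas--Sullivan one. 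The operation $\Delta$ of Figure \ref{fig:coprod} is a different diagram, and no identification of an Abbaspour-style formula with the dual Chas--Sullivan coproduct is available to cite. So the step you flag as ``a careful comparison of signs and conventions'' is in fact the entire content of the proposition, and your proposal does not supply it.

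The paper closes this gap without any formula-matching: $\Delta$ and $B$ are the images under the functor $K:\SD_d \to \lDa_d$ of the Sullivan diagrams of \cite[Figure 13]{wahl11}, and by Proposition \ref{prop:commsym} the $\lDa$-action on the Hochschild complex of a commutative Frobenius algebra restricts along $K$ to the $\SD$-action for the underlying symmetric Frobenius algebra. Hence the identification with the dual Chas--Sullivan product and the BV operator is inherited verbatim from \cite[Prop.\ 6.10]{wahl11}, where it is already established for symmetric Frobenius algebras. If you want to keep your computational route, you would need to replace the Abbaspour citation by an actual comparison with the Wahl--Westerland chain model (or some other proven chain-level model of the dual loop product); as written, the key identification is asserted on the basis of a reference that does not contain it.
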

\begin{proof}
The diagrams $\Delta$ and $B$ are the images of the diagrams illustrated in \cite[Figure 13]{wahl11} under the functor $K: \SD \to \lD$. Thus, the result is a direct consequence of \cite[Prop. 6.10]{wahl11}.
\end{proof}
\begin{figure}[!ht]

 \center
\input{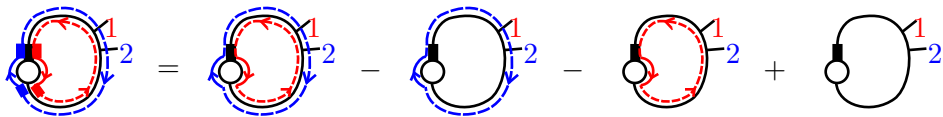}
\caption{The product $\mu$}
	\label{fig:prod}
\end{figure}
\begin{Example}[Product of suspended BV-structure on $HH_*$]
The image of the diagram $\mu \in \lD_+(\oc{2}{0},\oc{1}{0})$ shown in Figure \ref{fig:prod} (in both ways of visualizing it as an element in $\lD$ and $\lDa$) defines a product on the Hochschild chains of commutative cocommutative open Frobenius dg-algebras. This product was introduced in \cite[Section 7]{abba13} and \cite[Section 6]{abba13b}, where it was also shown that together with the BV-operator it induces a BV-structure on the Hochschild homology. In Section \ref{sec:cacti} we will show that it is part of a desuspended Cacti operad.
\end{Example}
The product $\mu$ shows similar behavior as the Goresky-Hingston product on $H^*(LM, M)$ (cf. \cite{gore09}). For example, the composition of the Goresky-Hingston coproduct with the Chas-Sullivan product is zero. We can show a similar observation for $\mu \circ \Delta$, namely:
\begin{Prop}
The composition $\mu \circ \Delta$ is a boundary in $\lDa(\oc{1}{0}, \oc{1}{0})$, i.e. it gives a trivial operation on Hochschild homology of commutative cocommutative open Frobenius dg-algebras. 
\end{Prop}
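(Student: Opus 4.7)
The plan is to exhibit an explicit element $D' \in \lDa_+(\oc{1}{0}, \oc{1}{0})$ whose differential equals $\mu \circ \Delta$. The natural candidate is (a suitable signed combination of) the element $D$ displayed in Figure~\ref{fig:D}, which was introduced precisely for this purpose: each summand of $D$ has one more degree than the summands showing up in $\mu \circ \Delta$, and its structure (a chord plus a loop running around a white vertex, minus the ``contracted'' companion diagrams) is exactly what is needed to pair up with the summands of $\mu \circ \Delta$ under the boundary map.

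First I would compute $\mu \circ \Delta$ explicitly. Unwinding the composition rule from Definition~\ref{def:lD}: we remove the two white vertices of $\Delta$ and glue their incident half-edges to the two white vertices of $\mu$ along the loops $\gamma'_1, \gamma'_2$ of $\mu$, then keep track of the induced loop $g_1$ starting at the leaf labeled $1$ of $\Delta$. Because $\Delta$ has two white vertices each with a single half-edge attached, the only choice in the gluing is on which side of the chord of $\mu$ the half-edges land; this produces a short, manageable list of commutative Sullivan diagrams, each equipped with one induced loop, and with signs dictated by the juxtaposition rule described after Definition~\ref{def:lD}.

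Second, I would apply the differential $d$ to $D$ using the recipe from Section~\ref{sec:comsul}: at each white vertex of each summand of $D$, blow up every pair of neighboring half-edges with the prescribed alternating signs, updating the induced loop by forgetting the contracted boundary segment when it appeared in the loop. Most summands in $dD$ will cancel in pairs because $D$ itself is a signed sum of four pictures that are designed to produce matching, cancelling boundaries on the interior of each component; the remaining boundary terms are exactly the diagrams obtained in the computation of $\mu \circ \Delta$. Verifying that the signs match is essentially the same sign calculation as in the proof that $\circ$ is a chain map, applied componentwise.

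The main obstacle is bookkeeping: making sure that every summand in $dD$ appears in $\mu \circ \Delta$ with the same sign, and that the companion terms subtracted in the definition of $D$ (the two diagrams without the loop in Figure~\ref{fig:D}) precisely kill the unwanted boundaries coming from contracting the segment supporting the loop to a point. Once this matching is carried out, one concludes $d D = \mu \circ \Delta$ in $\lDa_+(\oc{1}{0}, \oc{1}{0}) \subseteq \lDa(\oc{1}{0}, \oc{1}{0})$, and applying the functor $J_{cFr_+}$ from Theorem~\ref{Th:buildop} then shows that the induced natural operation is a boundary in $\bNat_{cFr_+}(\oc{1}{0},\oc{1}{0})$, hence acts as zero on Hochschild homology.
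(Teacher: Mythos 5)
Your proposal takes exactly the same route as the paper: the paper's proof simply computes $\mu \circ \Delta$ explicitly (displayed in Figure~\ref{fig:rel}) and observes that the result equals $d(D)$ for the element $D$ of Figure~\ref{fig:D}. Your plan correctly identifies $D$ as the bounding element and lays out the same verification, so it only remains to carry out the finite diagram and sign bookkeeping you describe.
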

\begin{proof}
In Figure \ref{fig:rel} we have computed $\mu \circ \Delta$. This is equal to the boundary of the element $D$ defined in Figure \ref{fig:D}.
\end{proof}

\begin{figure}[!ht]

 \center
\input{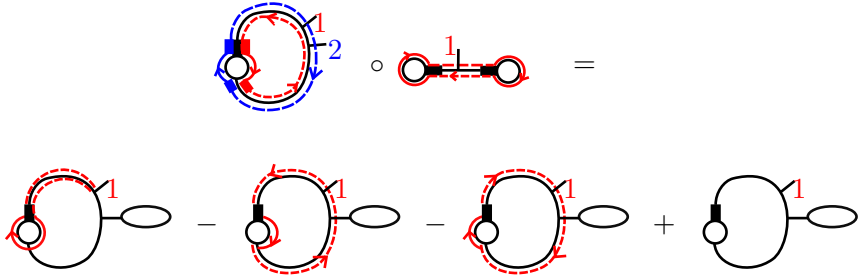}
\caption{The composition $\mu \circ \Delta=d(D)$}
	\label{fig:rel}
\end{figure}
Hence simultaneously with \cite{abba13b} we conjecture:
\begin{Conjecture}\label{conj:GH}
Under the isomorphism $H^{-*}(LM, \Q) \cong HH_*(C^{-*}(M), C^{-*}(M)) \cong HH_*(A^{-*}, A^{-*})$ the the Goresky-Hingston product corresponds to the (shifted) operation induced by $\mu \in  \lD_{-d}(\oc{2}{0},\oc{1}{0})$ shown in Figure \ref{fig:prod}.
\end{Conjecture}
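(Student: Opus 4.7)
The plan is to establish the conjecture by producing an explicit chain-level model in which both the Goresky–Hingston product and the operation $J_{cFr_{-d}}(\mu)$ can be compared directly. First, I would fix a chain of quasi-isomorphisms realising the isomorphism $H^{-*}(LM,\Q)\cong HH_*(A^{-*},A^{-*})$: Jones's cyclic bar complex identification $HH_*(C^{-*}(M),C^{-*}(M))\cong H^{-*}(LM)$, followed by the Lambrechts–Stanley Poincaré duality model $A\simeq \Omega^\bullet(M)$. The point is to make these maps compatible with the geometric decomposition of a loop into finitely many subpaths, so that a Hochschild chain $a_0\otimes\cdots\otimes a_p\in A^{\otimes (p+1)}$ corresponds geometrically to a loop carrying $a_0,\ldots,a_p$ on its successive segments. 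Under the Lambrechts–Stanley model the Frobenius coproduct $\Delta:A\to A\otimes A$ is, up to a degree shift $-d$, the algebraic counterpart of the Thom class of the diagonal $M\hookrightarrow M\times M$.

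Next I would spell out the operation $J(\mu)$ concretely. By Remark \ref{rem:optype}, since $\mu\in\lD_+(\oc{2}{0},\oc{1}{0})$ is a diagram of type $(1,1)$ on both inputs, the induced operation acts trivially on Hochschild $0$-chains and, on positive Hochschild degrees $(p,q)$ with $p,q\ge 1$, is obtained by inserting $l_{p+1}\amalg l_{q+1}$ into the loops of $\mu$ in all compatible cyclic orderings. Unpacking Figure~\ref{fig:prod}, this yields a formula on $(a_0\otimes\cdots\otimes a_p)\otimes(b_0\otimes\cdots\otimes b_q)$ that is a signed sum, indexed by pairs $(i,j)$ with $1\le i\le p$, $1\le j\le q$, of the image of $a_i\otimes b_j$ under $\Delta\circ m$ interlaced into a single cyclic word built from the remaining $a_k$'s and $b_\ell$'s. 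The shift by $-d$ coming from $\lDa_{-d}$ produces the $(d-1)$-shift of the Goresky–Hingston product.

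I would then match this formula to the Goresky–Hingston product. On the side of $H^{-*}(LM,M)$ the product of two cycles $\alpha,\beta$ is defined by pulling back along the (transverse) evaluation map $LM\times LM\to M\times M$ to the diagonal and concatenating the resulting pair of loops at the intersection point. Jones's isomorphism translates the evaluation at a marked point of a loop into selecting a tensor factor of the Hochschild chain; the transverse intersection with the diagonal becomes, under Poincaré duality in $A$, the Frobenius coproduct $\Delta$; and geometric concatenation of loops at that point becomes the interlacing of the two cyclic words described above. Summing over all pairs of internal marked points $(i,j)$ corresponds to integration along the parameterising simplex of pairs of marked points in the Goresky–Hingston construction. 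The fact that the operation vanishes when one of the factors is of Hochschild degree $0$ matches that the Goresky–Hingston product vanishes on the subcomplex $C^*(M)\subset C^*(LM)$ of constant loops.

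The main obstacle is Step 3, the careful chain-level comparison: the Goresky–Hingston product is only defined up to coherent homotopy at the cochain level, and the Jones and Lambrechts–Stanley quasi-isomorphisms do not preserve the geometric transversality data in a manifest way. A promising alternative route, which I would pursue in parallel, is to compare full shifted BV-structures rather than single products. Corollary~\ref{Cor:D} produces a desuspended BV-algebra structure on $HH_*(A^{-*},A^{-*})$ whose product is $\mu$ and whose BV-operator is the Connes operator $B$; on the other side, Goresky–Hingston together with the loop rotation operator yield a shifted BV-structure on $H^{-*}(LM,M)$. Since both BV-structures are generated under the operadic action by a product and a BV-operator, and since the BV-operator on both sides is already known to correspond to Connes's $B$ under the Jones isomorphism, it would suffice to verify that the two shifted BV-algebra structures agree — which by uniqueness of shifted BV-extensions of a given commutative product reduces the conjecture to matching the products modulo boundaries, a strictly weaker statement accessible via spectral sequence comparison.
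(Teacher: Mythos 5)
The statement you are trying to prove is stated in the paper as a \emph{conjecture}: the paper offers no proof of it, only circumstantial evidence (the relation $\mu\circ\Delta=d(D)$ of Figure \ref{fig:rel}, mirroring the vanishing of the composition of the Goresky--Hingston coproduct with the Chas--Sullivan product). So there is no argument in the paper to compare yours against, and the question is whether your proposal actually closes the conjecture. It does not: it is a strategy outline whose decisive step is left open. You say so yourself --- ``the main obstacle is Step 3, the careful chain-level comparison'' --- and that step is precisely the content of the conjecture. The Goresky--Hingston product is defined on $H^*(LM,M)$ by a geometric intersection/concatenation construction that is only homotopy-coherent at the cochain level, while $J_{cFr_{-d}}(\mu)$ is a strict chain-level operation on the Hochschild complex of the Lambrechts--Stanley model $A$. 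To compare them you must choose explicit chain-level representatives of the Jones quasi-isomorphism and of the weak equivalence $A\simeq\Omega^\bullet(M)$ and track the Thom/diagonal data through both; nothing in your proposal supplies these choices or controls the resulting homotopies, and the heuristic dictionary (``evaluation $\leftrightarrow$ tensor factor, transverse intersection $\leftrightarrow$ $\Delta$, concatenation $\leftrightarrow$ interlacing'') is exactly the assertion to be proved, not an argument for it.

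The proposed ``alternative route'' does not repair this. First, ``matching the products modulo boundaries'' is not a weaker statement than the conjecture: the conjecture is an identity of induced maps on homology, which is the same thing as a chain-level identity up to boundaries. Second, the appeal to ``uniqueness of shifted BV-extensions of a given commutative product'' presupposes that the two products already agree, and no such uniqueness principle is available in any case: a shifted BV-operator together with a product does not determine the product, and knowing that Connes's $B$ corresponds to loop rotation (which is the content of the Jones isomorphism, not of Corollary \ref{Cor:D}) gives no information about which degree~$(1-d)$ product on $HH_*(A^{-*},A^{-*})$ the Goresky--Hingston product lands on. Your first two steps (the explicit combinatorial formula for $J(\mu)$ on a pair of Hochschild chains, and the observation that it vanishes when either factor has Hochschild degree $0$ because $\mu$ lies in the non-constant part $\lD$ and hence maps into $\Nat^{>0}$) are correct and useful groundwork, but as written the proposal establishes only consistency checks, not the conjecture.
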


\section{The operations coming from commutative algebras}\label{sec:com}

We have a map of PROPs $\Com \to cFr$ which is the identity on objects and an inclusion on morphism spaces (since the structure of commutative Frobenius algebras includes the structure of commutative algebras). Therefore, we get an inclusion $\Nat_\Com \to \Nat_{cFr}$. This inclusion is split and factors through  $\Nat_{cFr_+}$.

In \cite{kla13}, working over a field $\k$, we recalled the shuffle operations defined in \cite{loda89} and computed the homology of $\Nat_\Com$ in terms of infinite sums of shuffles of these. In this section we define a split subcomplex of $\iplDa(\oc{n_1}{m_1}, \oc{n_2}{m_2})$ whose image under the map $J_{cFr}:\iplDa(\oc{n_1}{m_1}, \oc{n_2}{m_2}) \to \Nat_{cFr_+}(\oc{n_1}{m_1}, \oc{n_2}{m_2})$ defined in Theorem \ref{Th:buildop2} is quasi-isomorphic to $\Nat_\Com(\oc{n_1}{m_1}, \oc{n_2}{m_2})$. On the level of complexes we give an even smaller subcomplex of $\iplDa(\oc{n_1}{m_1}, \oc{n_2}{m_2})$ which has trivial differential such that the map to $\Nat_\Com(\oc{n_1}{m_1}, \oc{n_2}{m_2})$ is a quasi-isomorphism, too. 

\begin{Def}\label{def:com}
The subcomplex $\plDa_\Com(\oc{n_1}{m_1}, \oc{n_2}{m_2})$ of $\plDa_+(\oc{n_1}{m_1}, \oc{n_2}{m_2})$ is spanned by all looped diagrams $(\Gamma, \gamma_1, \cdots, \gamma_{n_1})$ such that $\Gamma$ is a disjoint union of $n_2$ white vertices with trees attached to it (i.e. leaves multiplied together and attached to the white vertex and thus each irreducible loop goes once around the whole vertex) and $m_2$ labeled outgoing leaves with trees attached to them.
The complex $\iplDa_\Com(\oc{n_1}{m_1}, \oc{n_2}{m_2})$ is given as above by taking products over the type of diagrams as defined in Section \ref{sec:type}.
\end{Def}
Note that an element of type $(t_1, \cdots, t_{n_1})$ corresponds to the $i$--th loop given by $\gamma_i=\langle\underbrace{\sigma, \cdots, \sigma}_{t_i}\rangle$ with $\sigma$ the loop going once around the vertex starting at the leaf $i$. An example of an element in $\plDa_\Com(\oc{4}{2}, \oc{2}{1})$ of type $(2,1,1,1)$ is given in Figure \ref{fig:com1} (where we label the leaves corresponding to $m_1$ and $m_2$ with subindices).
\begin{figure}[!ht]

 \center
\input{pic_com1}

\caption[An element in $\plDa_\Com$]{An element in $\plDa_\Com({\oc{4}{2}, \oc{2}{1}})$ of type $(2,1,1,1)$}	
	\label{fig:com1}
\end{figure}

\begin{Example}
We can describe the generators of $\plDa_\Com(\oc{1}{0}, \oc{1}{0})$ explicitly:

In degree zero it is generated by a family of elements $sh^n$.
The element $sh^n$ of type $n$ is defined as $(\Gamma, \langle \sigma, \cdots, \sigma\rangle)$ where $\Gamma$ is the diagram with one white vertex and the leaf attached to the start half-edge and $\sigma$ the loop going once around the vertex (i.e. we have $n$ irreducible loops around the vertex). For $n=0$ we only have the underlying diagram, representing the inclusion of the algebra. For an example see Figure \ref{fig:sh}.

The above elements have been defined using the type decomposition from Section \ref{sec:type} which we need to use if we want to take products over the elements. However, we saw that we can rewrite them by ordinary elements in $\plDa(\oc{1}{0}, \oc{1}{0})$. Thus for $n >0$, we define the element $\lambda^n$ to be the non-constant diagram in $\plD(\oc{1}{0}, \oc{1}{0})$ looping around the white vertex $n$ times (i.e. the loop $\sigma^{\ast n}$ which is not irreducible) and define $\lambda^0=sh^0$. These elements give us another generating family of $\plD(\oc{1}{0}, \oc{1}{0})$ and we will come back to them when explaining the operations corresponding to these diagrams. An example is given in Figure \ref{fig:lambda}.

For the degree one part of $\plDa_\Com(\oc{1}{0}, \oc{1}{0})$ we only give the generating family in terms of the type decomposition:
Recall from Figure \ref{fig:BV} that the BV-operator is the looped diagram with a unit at the start half-edge, one labeled incoming leaf attached to the white vertex and a loop going once around from that leaf.

The elements $B^n$ of type $n$ are defined as $B \circ sh^n$, so they are given by the same graphs as the BV-operator but have $n$ irreducible loops going around (cf. Figure \ref{fig:b}).

\begin{figure}[ht!]  \begin{center}
\subfigure
       [The  element $sh^2$]
        {\hspace{1cm}
          
            \input{pic_sh}\hspace{1cm}
            \label{fig:sh}
        }%
        \subfigure
       [The  element $\lambda^2$]
        {\hspace{1cm}
          
            \input{pic_lambda}\hspace{1cm}
            \label{fig:lambda}
        }%
        \subfigure
       [The  element $B^2$]
        {\hspace{1cm}
          
            \input{pic_b2}\hspace{1cm}
            \label{fig:b}
        }%
 
    \end{center}
   \caption[General elements]{
       General elements in $\plDa(\oc{1}{0}, \oc{1}{0})$}
   \label{fig:exgen}
\end{figure}
\end{Example}

\begin{Lemma}\label{le:comcomp}
All morphisms in $\iplDa_\Com$ are composable and thus $\iplDa_\Com$ is a dg-category.
\end{Lemma}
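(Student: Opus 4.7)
My approach is to first verify that composition preserves the $\plDa_\Com$ structure while multiplying the type indices, and then use this multiplicativity to establish the required finiteness in the product complex.

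For elements $a = (\Gamma, \gamma_1, \ldots, \gamma_{n_1}) \in \plDa_\Com$ of type $(t_1, \ldots, t_{n_1})$ and $b = (\Gamma', \gamma'_1, \ldots, \gamma'_{n_2}) \in \plDa_\Com$ of type $(t'_1, \ldots, t'_{n_2})$, let $j : \{1, \ldots, n_1\} \to \{1, \ldots, n_2\}$ assign to each input $i$ the white vertex of $\Gamma$ that $\gamma_i$ loops around. The gluing procedure defining $b \circ a$ inserts the trees attached to each $v_l$ in $\Gamma$ into the boundary segments of the corresponding white vertex of $\Gamma'$ along $\gamma'_l$, thereby preserving the tree-at-vertex and tree-at-outgoing-leaf structure that characterises $\plDa_\Com$. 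Moreover, each of the $t_i$ irreducible subloops of $\gamma_i$ (each making one full turn around $v_{j(i)}$) is sent by the gluing map to exactly $t'_{j(i)}$ irreducible loops around the corresponding vertex in the composite, since $\gamma'_{j(i)}$ itself makes $t'_{j(i)}$ full turns. Hence $b \circ a$ lies in $\plDa_\Com$ and has type $(t_1 \cdot t'_{j(1)}, \ldots, t_{n_1} \cdot t'_{j(n_1)})$.

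Using this type-multiplication rule, composability in $\iplDa_\Com$ reduces to a finiteness check: for any fixed output type $(u_1, \ldots, u_{n_1})$, a pair $(a_t, b_{t'})$ contributes only when $t_i \cdot t'_{j(i)} = u_i$ for every $i$. For each admissible underlying graph of $a_t$ the possible values of $t_i$ are divisors of $u_i$, hence finite, and these values together with $(u_i)$ fix $t'_{j(i)}$; the remaining coordinates $t'_k$ with $k \notin \mathrm{image}(j)$ label white vertices of $\Gamma$ not touched by any loop of $a$, and I would argue that the contributions for different $t'_k$ at such $k$ coincide after the commutative Sullivan equivalence, collapsing to a single well-defined term. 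Once composability is established, associativity and the existence of identities transfer componentwise from the surrounding category $\plDa_+$, so $\iplDa_\Com$ inherits the structure of a dg-category.

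The main obstacle will be this last collapse argument: making precise why the contributions from pairs differing only in $t'_k$ for $k \notin \mathrm{image}(j)$ reduce to finitely many distinct terms. This requires a careful case analysis of how the commutative Sullivan relations act at untouched white vertices, distinguishing the cases where $v_k$ of $\Gamma$ carries only its start half-edge from those where it has additional half-edges coming from $m_1$-leaves or units. Once this combinatorial verification is in hand, everything else follows from the already-established type multiplication and Proposition~\ref{prop:composable}.
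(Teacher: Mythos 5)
Your argument breaks at the type-multiplication rule: the composite $b\circ a$ of generators of types $(t_i)$ and $(t'_l)$ is \emph{not} concentrated in type $(t_i\cdot t'_{j(i)})$. The rule "one full turn around $v_{j(i)}$ maps to $t'_{j(i)}$ full turns" is correct for the naive loops $\lambda^k=(\Gamma,\sigma^{*k})$, which do satisfy $\lambda^{k'}\circ\lambda^{k}=\lambda^{kk'}$, but the type-$t$ basis elements are the alternating sums $\langle\sigma,\ldots,\sigma\rangle$, and the cross terms in the product of two such sums do not cancel. Concretely, in $\plDa_\Com(\oc{1}{0},\oc{1}{0})$ one has $sh^t=\sum_k(-1)^{t-k}\binom{t}{k}\lambda^k$ and $\lambda^m=\sum_u\binom{m}{u}sh^u$, whence $sh^2\circ sh^2=\lambda^4-4\lambda^2+4\lambda^1-\lambda^0=sh^4+4\,sh^3+2\,sh^2$: types $2$, $3$ and $4$ all occur, not just $4=2\cdot 2$. (Multiplicativity is impossible a priori: $\lambda^2\circ\lambda^2=\lambda^4$ contains $4\,sh^3$, yet $sh^3$ does not occur in $\lambda^2=sh^0+2sh^1+sh^2$, so no rule of the form $sh^{a}\circ sh^{b}\in\k\cdot sh^{ab}$ can reproduce it.) With the exact formula gone, your divisibility argument for finiteness collapses.

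What survives — and what the paper's proof actually establishes — is a pair of \emph{inequalities} rather than an equality: the output type is bounded below componentwise by $(t_i)$, since the gluing can only subdivide an irreducible loop of $a$ further, and $\sum_l t'_l$ is bounded above in terms of the output type and $\deg(a)$, which the paper extracts by applying $J$ and using that $J(b)_{j_1,\ldots,j_{n_2}}$ vanishes unless $j_l>t'_l$. Together these confine the contributing pairs of types to a finite set for each fixed output type, which is all composability needs; your preliminary observation that $b\circ a$ again lies in $\plDa_\Com$ is fine and can be kept. Finally, your treatment of the coordinates $t'_k$ with $k\notin\mathrm{image}(j)$ is also off: the point is not that different values of $t'_k$ give coinciding contributions under the commutative Sullivan relations, but that each of the $t'_k$ irreducible subloops of $\gamma'_k$ must absorb at least one half-edge of the loop-free vertex $v_k$ of $a$ for the composite to be nonzero, so $t'_k$ is bounded by the valence of $v_k$ and all larger values contribute zero.
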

\begin{proof}
It is enough to check the claim on generators.
Given $a \in \plDa_\Com(\oc{n_1}{m_1}, \oc{n_2}{m_2})$ of type $(t_1, \ldots, t_{n_1})$ and $b \in \plDa_\Com(\oc{n_2}{m_2}, \oc{n_3}{m_3})$ of type $(t'_1, \ldots, t'_{n_2})$ we give conditions on the type of the composition being non-zero. First, every irreducible loop of $a$ becomes an irreducible loop in the composition, i.e. the type of non-trivial elements in $a \circ b$ is bounded below by $(t_1, \ldots, t_{n_1})$.

In $\plDa_\Com(\oc{n_1}{m_1}, \oc{n_2}{m_2})$ we can rewrite every diagram as a diagram without loops together with its type. In particular, in $a \circ b$ the underlying diagrams are of the form $a \circ \Gamma'$ for $\Gamma'$ the underlying diagram of $b$. 
Assuming that the minimal non-trivial type of the composition $a \circ b$ is $(u_1, \ldots, u_{n_1})$, we obtain that composition with $(l_{u_1+1} \amalg \cdots \amalg l_{u_{n_1}+1} \amalg \id_{m_2})$ is non-trivial (different underlying diagrams of the same type have different images under this composition). Thus $J(b \circ a)_{(u_1+1, \cdots, u_{n_1}+1)}$ is non-zero. However, we know that in the composition $(J(b) \circ J(a))_{l_1, \cdots, l_{n_1}}$ one glues the summands of $J(a)_{l_1, \cdots, l_{n_1}}$ onto terms $J(b)_{j_1, \cdots, j_{n_2}}$ with $\sum {j_i}=\deg(a) + \sum (l_i-1)$. For this to be non-zero, we need $J(b)_{j_1, \cdots, j_{n_2}}$ to be non-zero, which is only true if $j_i > t'_i$ for all $i$. Thus we conclude that $\sum t'_i < \sum j_i = deg(a) + \sum u_i$ and hence for any type $(u_1 \cdots, u_{n_1})$ occurring in $b \circ a$ we have $\sum t'_i-\deg(a) <  \sum u_i$ and $t_i \leq u_i$ for all $i$. This proves the lemma.
\end{proof}
There is an analog of Lemma \ref{le:natlooped} for commutative algebras:
\begin{Lemma}\label{le:comnat}
We have an isomorphism
\[\bC^n(\Com(m_1,-))(m_2) \cong \plDa_\Com(\oc{0}{m_1}, \oc{n}{m_2}) \]
and hence a weak equivalence 
\[\prod_{j_1, \cdots, j_{n_1}} \plDa_\Com(\oc{0}{j_1+ \cdots + j_{n_1}+m_1}, \oc{n_2}{m_2})[n_1-\Sigma j_i] \to \bNat_\Com(\oc{n_1}{m_1}, \oc{n_2}{m_2}).\]
\end{Lemma}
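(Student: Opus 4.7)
The plan is to follow the blueprint of Lemma \ref{le:natlooped} (itself modelled on \cite[Lemma 6.1]{wahl11}), specialized to the commutative PROP $\Com$. First I would set up the isomorphism $\bC^n(\Com(m_1,-))(m_2) \cong \plDa_\Com(\oc{0}{m_1}, \oc{n}{m_2})$ at the level of basis vectors. A generator on the left in Hochschild multi-degree $(j_1, \ldots, j_n)$ with each $j_i \geq 1$ is, by commutativity and unitality of $\Com$, simply a set-theoretic function $f \colon \{1,\ldots,m_1\} \to \{1,\ldots, j_1+\cdots+j_n+m_2\}$, where the target is partitioned into $n$ cyclically ordered blocks of sizes $j_1, \ldots, j_n$ (the white-vertex positions) and $m_2$ external outputs. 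I would send such an $f$ to the commutative Sullivan diagram in $\plDa_\Com(\oc{0}{m_1}, \oc{n}{m_2})$ whose $i$-th white vertex carries $j_i$ half-edges in the prescribed cyclic order, whose $m_2$ outgoing leaves are labelled as usual, and whose $m_1$ incoming leaves are attached via trees of trivalent black multiplications onto the position $f(k)$ assigned by $f$.

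Next I would verify that the reduced quotient matches on the nose. The degeneracies in the $i$-th Hochschild direction insert a unit at the appropriate position, so passing to $\bC^n$ forces every internal position to lie in the image of $f$. This is precisely the condition imposed on diagrams in $\plDa_\Com$: every half-edge at every white vertex must carry at least one input leaf, otherwise the attached subtree would represent a unit and be killed in the reduced complex. The degree count matches since a white vertex of valence $j_i$ contributes $j_i - 1$ to the total degree, yielding $\Sigma j_i - n$, while trivalent black vertices have degree zero. The simplicial face $d_i$ multiplies two consecutive positions at a white vertex in $\Com$, which under the bijection becomes the contraction of the corresponding boundary segment in the commutative Sullivan picture; these operations coincide with matching alternating signs.

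For the second statement, combining the first isomorphism with \cite[Theorem 2.1]{wahl12} gives
\[\bNat_\Com(\oc{n_1}{m_1}, \oc{n_2}{m_2}) \cong D^{n_1}\bigl(\bC^{n_2}(\Com(-,-))(m_2)\bigr)(m_1).\]
By \cite[Prop. 1.7+1.8]{wahl12} the inclusion $\bD \to D$ is a quasi-isomorphism, and unfolding $\bD^{n_1}$ produces
\[\prod_{j_1, \ldots, j_{n_1}} \bC^{n_2}\bigl(\Com(j_1+\cdots+j_{n_1}+m_1,-)\bigr)(m_2)[n_1 - \Sigma j_i].\]
Applying the first isomorphism factorwise yields the asserted product of $\plDa_\Com$ complexes and hence the desired weak equivalence.

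The main bookkeeping obstacle is tracking the alternating signs arising from the multi-simplicial structure on the Hochschild side against the orientation conventions on commutative Sullivan diagrams from Section \ref{sec:comsul}, particularly after iterating $\bD^{n_1}\bC^{n_2}$; this is, however, essentially the same sign check already carried out in Lemmas \ref{le:chcplx} and \ref{le:natlooped}, so no genuinely new difficulty arises.
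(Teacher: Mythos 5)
Your argument follows exactly the route the paper intends: Lemma \ref{le:comnat} is stated there without proof, as the $\Com$-analogue of Lemma \ref{le:natlooped} (itself deferred to \cite[Lemma 6.1]{wahl11}), and your basis-level identification of $\bC^n(\Com(m_1,-))(m_2)$ with trees-attached-to-white-vertices diagrams, followed by the coHochschild unfolding via \cite[Theorem 2.1]{wahl12} and \cite[Prop. 1.7+1.8]{wahl12}, is precisely that argument. One correction: the reduced quotient only forces the positions other than the zeroth slot of each Hochschild factor to lie in the image of $f$, and correspondingly the start half-edge of each white vertex is allowed to be bare (this is exactly why unlabeled start half-edges are remembered in the definition of $\oc{p}{m}$--graphs); your condition that every half-edge at every white vertex must carry an input leaf would wrongly exclude generators such as the underlying diagram of $B^0$. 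Since this slip occurs symmetrically on both sides of your dictionary, the bijection, the degree count, and the rest of the proof are unaffected.
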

It follows that the diagram
\[\xymatrix{ \prod_{j_1, \cdots, j_{n_1}} \plDa_\Com(\oc{0}{j_1+ \cdots + j_{n_1}+m_1}, \oc{n_2}{m_2})[n_1-\Sigma j_i] \ar@{^{(}->}[d] \ar[rr]^-{\simeq}&& \bNat_\Com(\oc{n_1}{m_1}, \oc{n_2}{m_2})\ar@{^{(}->}[d]\\ \prod_{j_1, \cdots, j_{n_1}} \plDa(\oc{0}{j_1+ \cdots + j_{n_1}+m_1}, \oc{n_2}{m_2})[n_1-\Sigma j_i] \ar[rr]^-{\simeq}& &\bNat_{cFr}(\oc{n_1}{m_1}, \oc{n_2}{m_2})}\]
commutes. 

Hence the dg-map $J_{cFr}:\iplDa(\oc{n_1}{m_1}, \oc{n_2}{m_2}) \to \bNat_{cFr}(\oc{n_1}{m_1}, \oc{n_2}{m_2})$ restricts to a dg-map
$J_{\Com}:\iplDa_\Com(\oc{n_1}{m_1}, \oc{n_2}{m_2}) \to \bNat_{\Com}(\oc{n_1}{m_1}, \oc{n_2}{m_2})$.

By Lemma \ref{le:comcomp} all morphisms in $\iplDa_\Com$  are composable and since $J_{cFr}$ and hence also $J_\Com$ preserve the composition of composable objects, the map $J_\Com$ is a natural transformation of categories $J_\Com: \iplDa_\Com \to \Nat_\Com$.

Let $m_{r_1, \cdots, r_n}$ be the $\oc{1}{(0,n)}$--looped diagram with the tree with $n$ leaves labeled $r_1$ to $r_n$ attached to the start half-edge, no other half-edges attached to the white vertex and a loop going around the white vertex for each $i$. 
For $n=0$ the diagram $m_\emptyset$ only has a unit at the start half-edge. 
Denote by $\overline{m}_{r_1, \cdots, r_n}$ the $\oc{0}{(n+1,0)}$--looped diagram consisting of the tree with incoming leaves labeled by $r_1, \cdots r_n$ and one outgoing leaf. For $m=0$ this diagram only has one unlabeled half-edge which is the outgoing leaf (i.e. it is a unit).
\begin{figure}[ht!]  \begin{center}
\subfigure
       [The  element $sh^2$]
        {\hspace{1cm}
          
            \input{pic_sh}\hspace{1cm}
           
        }%
        \subfigure
       [The  element $B^2$]
        {\hspace{1cm}
          
            \input{pic_b2}\hspace{1cm}
            
        }%
 \subfigure
        [The  element $m_{1,4,6,7}$]
        {
          
            \input{pic_m}\hspace{0.5cm}
        }\hspace{0.8cm}
\subfigure
        [The  element $\overline{m}_{1,4,6,7}$]
        {
            \hspace{1cm}
            \input{pic_mbar}
            \hspace{0.5cm}

        }
    \end{center}
   \caption{
       Building blocks of $\tplDa$
     }
   \label{fig:build}
\end{figure}

\begin{Def}\label{def:tcom}
The subcomplex $\tplDa(\oc{n_1}{m_1}, \oc{n_2}{m_2})$ of $\plDa_\Com(\oc{n_1}{m_1}, \oc{n_2}{m_2})$ spanned by elements $x$ obtained as follows:
Given 
\begin{itemize}

\item a function $f:\{1, \cdots, n_1+m_1\} \to \{1, \cdots, n_2+m_2\}$; 

\item a tuple of integers $(t_1, \cdots, t_{n_1})$ (the type), with $t_i=0$ if $f(i)>n_2$,
\item a function $s: f^{-1}(\{1, \cdots, n_2\}) \to \{0,1\}$
\end{itemize}
we define $x= x_2 \circ x_1$ as the composition of two looped diagrams $x_1$ and $x_2$ with  $x_1 \in \plDa_\Com(\oc{n_1}{m_1}, \oc{c}{m_1+n_1-c})$ and $x_2 \in \plDa_\Com(\oc{c}{m_1+n_1-c}, \oc{n_2}{m_2})$ for $c:=|f^{-1}(\{1, \cdots, n_2\})|$. 
\begin{itemize}
\item The element $x_1=x_1^1 \amalg \ldots \amalg x_1^{n_1+m_1}$ is defined as the disjoint union of $n_1+m_1$ elements with each one incoming leaf (such that we relabeled the leaf of the $i$th element by $i$ for $i \leq n_1$ and $j_{m_1}$ for $j=i-n_2$ for $n_1 \leq i \leq m_1+n_1$ after taking disjoint union).
These elements are defined as follows:
\begin{itemize}
\item If $1 \leq i \leq n_1$ and
\begin{itemize}
\item if $f(i) \leq n_2$ then $x_1^i \in \plDa_\Com(\oc{1}{0}, \oc{1}{0})$ is given by
\[ x_1^i = \begin{cases} sh^{t_i} &\text{if } s(i)=0\\ B^{t_i} & \text{if } s(i)=1.\end{cases}\]
\item if $f(i)>n_2$ and thus $t_i=0$, then $x_1^i=\id \in \plD_\Com(\oc{1}{0}, \oc{0}{1})$, the constant diagram with the incoming leaf glued to the outgoing leaf.
\end{itemize}
\item For $n_1+1 \leq i \leq n_1+m_1$ we have
\begin{itemize}
\item If $f(i) \leq n_2$ then $x_1^i \in \plDa_\Com(\oc{0}{1}, \oc{1}{0})$ given by
\[ x_1^i = \begin{cases} sh^0 &\text{if } s(i)=0\\ B^0 & \text{if } s(i)=1.\end{cases}\]
\item If $f(i) > n_2$ then $x_1^i=\id \in \plD_\Com(\oc{0}{1}, \oc{0}{1}).$
\end{itemize}
\end{itemize}
Thus $x_1=x_1^1 \amalg \ldots \amalg x_1^{n_1+m_1} \in \plDa(\oc{n_1}{m_1}, \oc{c}{m_1+n_1-c})$.
\item The element $x_2 \in \plDa(\oc{c}{m_1+n_1-c}, \oc{n_2}{m_2})$ multiplies the $i$--th incoming vertex onto the outgoing vertex $f(i)$ (and the incoming leaf onto $f(i)$ for $i>n_2$). More precisely, 
\[x_2=m_{\{f^{-1}(1)\}} \amalg \ldots \amalg  m_{\{f^{-1}(n_2)\}} \amalg \overline{m}_{\{f^{-1}(n_2+1)\}} \amalg \ldots \amalg \overline{m}_{\{f^{-1}(n_2+m_2)\}}. \]
\end{itemize}
We define $\tiplDa(\oc{n_1}{m_1}, \oc{n_2}{m_2})$ analogous to before by taking products over the type of these elements.
\end{Def} 
An example of such an element in $\tplDa(\oc{2}{2}, \oc{1}{1})$ with $t_1=1$, $t_2=2$, $f(1)=f(2)=f(4)=1$ and $f(3)=2$, $s(1)=s(3)=1$ and $s(2)=0$ is given in Figure \ref{fig:comx}. We have $x_1 \in \plDa_\Com(\oc{2}{2}, \oc{3}{1})$ and $x_2 \in \plDa_\Com(\oc{3}{1}, \oc{1}{1})$. 
\begin{figure}[ht!]  \begin{center}
\subfigure
       [The  element $x_1$]
        {
          
            \input{pic_comx1}
           
        }
     \hspace{1.5cm}
        \subfigure
       [The  element $x_2$]
        {
          
            \input{pic_comx2}\hspace{1cm}
            
        }%
        \vspace{0.5cm}
        
 \subfigure
        [The  element $x=x_2 \circ x_1$]
        {
          
            \input{pic_comxboth}
        }
%
    \end{center}
   \caption[An element in $\tplDa$]{An element in $\tplDa(\oc{2}{2}, \oc{1}{1})$}
   \label{fig:comx}
\end{figure}

Since both elements $x_1$ and $x_2$ have trivial differential, the differential on the complexes $\tplDa(\oc{n_1}{m_1}, \oc{n_2}{m_2})$ and $\tiplDa(\oc{n_1}{m_1}, \oc{n_2}{m_2})$ is also trivial.
Note moreover, that $\iplDa_\Com(\oc{1}{0}, \oc{1}{0}) \cong \tiplDa(\oc{1}{0}, \oc{1}{0})$.

We denote the further restriction of $J_\Com$ to $\tiplDa(\oc{n_1}{m_1}, \oc{n_2}{m_2})$ by $\widetilde{J}_\Com$. In \cite[Section 2.3]{kla13} we recalled Loday's lambda and shuffle operations $\lambda^k$ and $sh^k$ (cf. \cite{loda89}) and defined operations $B^k$ as the composition of the shuffle operations with Connes' boundary operator and used the families $sh^k$ and $B^k$ to build general operations in $\Nat_\Com(\oc{n_1}{m_1}, \oc{n_2}{m_2})$. Up to sign, the shuffle operations act on the Hochschild degree $n$ by taking all $(p_1, \cdots, p_k)$-shuffles in $\Sigma_{n}$ with $p_1+ \cdots p_k=n$ and all $p_j \geq 1$ and view these as elements in $\Com(n+1, n+1)$ leaving the first element fixed. Using this combinatorial description and recalling from Remark \ref{rem:optype} how to read off operations of type $k$, it is not hard to see that $\widetilde{J}_\Com$ sends the looped diagram $sh^k$ to the operation $sh^k \in \plDa(\oc{1}{0}, \oc{1}{0})$. Both, as diagrams and as operations we can rewrite the family $\lambda^k$ in terms of the $sh^k$ with the same coefficients occurring and hence see that $\widetilde{J}_\Com$ also sends the diagrams $\lambda^k$ to the corresponding operations $\lambda^k \in \plDa(\oc{1}{0}, \oc{1}{0})$. Hence this way we recover Loday's lambda operations. Since we have already seen that Connes' boundary operator is send to $B$ under $\widetilde{J}_\Com$, the same follows for the $B^k$.  Moreover, the looped diagrams $x_1$ and $x_2$ constructed in Definition \ref{def:tcom} are send to the operations $x_1$ and $x_2$ in Definition \cite[Def. 3.3]{kla13}. Therefore the complex spanned by the diagrams of type $(t_1, \ldots, t_{n_1})$ in $\tiplDa(\oc{n_1}{m_1}, \oc{n_2}{m_2})$ is mapped to the complex $A_{t_1, \cdots, t_{n_1}}$ as defined in \cite[Def. 3.3]{kla13} and thus $\widetilde{J}_\Com(\tiplDa(\oc{n_1}{m_1}, \oc{n_2}{m_2})) \cong \prod A_{t_1, \ldots, t_{n_1}}$. In \cite[Theorem 3.4]{kla13} we prove that the inclusion of this complex into $\Nat_\Com(\oc{n_1}{m_1}, \oc{n_2}{m_2})$ is a quasi-isomorphism, thus in terms of looped diagrams the theorem can be restated as:
\begin{Theorem}[{\cite[Theorem 3.4]{kla13}}]\label{th:comiso}
Let $\k$ be a field. Then  the map $\widetilde{J}_\Com$ is a quasi-isomorphism, i.e.
\[\widetilde{J}_\Com:\tiplDa(\oc{n_1}{m_1}, \oc{n_2}{m_2}) \xrightarrow{\simeq} \Nat_\Com(\oc{n_1}{m_1}, \oc{n_2}{m_2})\]
and the left complex has trivial differential, thus on homology
\[H_*(\widetilde{J}_\Com):\tiplDa(\oc{n_1}{m_1}, \oc{n_2}{m_2}) \xrightarrow{\cong} H_*(\Nat_\Com(\oc{n_1}{m_1}, \oc{n_2}{m_2})).\]
\end{Theorem}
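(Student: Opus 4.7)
My plan is to reduce Theorem~\ref{th:comiso} to \cite[Theorem 3.4]{kla13} by showing that $\widetilde{J}_\Com$ is an isomorphism of chain complexes from $\tiplDa(\oc{n_1}{m_1}, \oc{n_2}{m_2})$ onto the subcomplex $\prod_{t_1, \ldots, t_{n_1}} A_{t_1, \ldots, t_{n_1}} \subseteq \Nat_\Com(\oc{n_1}{m_1}, \oc{n_2}{m_2})$ defined in \cite[Def.\ 3.3]{kla13}. Once that identification is in place, the quasi-isomorphism statement follows formally from \cite[Theorem 3.4]{kla13}, since that result asserts precisely that the inclusion of this subcomplex into $\Nat_\Com$ is a quasi-isomorphism.

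First I would verify on each of the building blocks of Definition~\ref{def:tcom} that $\widetilde{J}_\Com$ maps them to the corresponding generators of \cite[Def.\ 3.3]{kla13}. The paragraph preceding the theorem already outlines this identification: using Remark~\ref{rem:optype} (which reads off the operation of a type-$(t_1,\ldots,t_{n_1})$ looped diagram), one checks that the looped diagram $sh^k$ is sent to Loday's $k$-fold shuffle operation, that $B^k$ is sent to the Connes operator composed with the shuffles, that $\lambda^k$ is sent to Loday's $\lambda$-operation (via the linear change of basis with the $sh^k$, which is the same combinatorial formula on both sides), and that the tree diagrams $m_\bullet$ and $\overline{m}_\bullet$ are sent to the iterated multiplications on $A$ and on Hochschild chains.

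Second, since every pair of morphisms in $\iplDa_\Com$ is composable by Lemma~\ref{le:comcomp}, and since $J_\Com$ preserves composition of composable morphisms by Theorem~\ref{Th:buildop2}, a typical generator $x = x_2 \circ x_1$ of $\tiplDa(\oc{n_1}{m_1}, \oc{n_2}{m_2})$ indexed by data $(f,(t_i),s)$ is sent under $\widetilde{J}_\Com$ to the composition of the images of its building blocks, which is exactly the generator of $A_{t_1,\ldots,t_{n_1}}$ indexed by the same data. A straightforward check shows this gives a bijection between generators of $\tiplDa$ and of $\prod A_{t_1,\ldots,t_{n_1}}$; in particular $\widetilde{J}_\Com$ is an isomorphism of graded vector spaces onto the subcomplex. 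Because $\tiplDa$ has zero differential (each building block does and the complex is given by products over type), the image inherits trivial differential, so the map is automatically a chain map and the second assertion of the theorem is immediate.

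The main obstacle I anticipate is bookkeeping signs and degrees: one has to match the orientation-and-sign conventions governing composition of looped diagrams with the sign conventions used in \cite{kla13} to define the generators of $A_{t_1,\ldots,t_{n_1}}$, and to check that the grading shift $[n_1-\Sigma j_i]$ entering the identification \[\bNat_\Com(\oc{n_1}{m_1},\oc{n_2}{m_2}) \simeq \prod_{j_1,\ldots,j_{n_1}} \plDa_\Com(\oc{0}{\Sigma j_i + m_1},\oc{n_2}{m_2})[n_1-\Sigma j_i]\] from Lemma~\ref{le:comnat} is compatible with the type-grading built into $\tiplDa$. Once this translation is carried out carefully, the argument proceeds as above and reduces the statement to \cite[Theorem 3.4]{kla13}.
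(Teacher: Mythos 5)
Your proposal is correct and follows essentially the same route as the paper: identify the images of the building blocks $sh^k$, $B^k$, $m_\bullet$, $\overline{m}_\bullet$ with the corresponding operations of \cite[Def.\ 3.3]{kla13} via Remark~\ref{rem:optype}, use compatibility with composition to conclude that $\widetilde{J}_\Com$ carries $\tiplDa(\oc{n_1}{m_1},\oc{n_2}{m_2})$ isomorphically onto $\prod A_{t_1,\ldots,t_{n_1}}$, and then invoke \cite[Theorem 3.4]{kla13}, with the trivial-differential claim already established right after Definition~\ref{def:tcom}. Nothing essential is missing beyond the sign/degree bookkeeping you already flag.
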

\begin{Cor}\label{cor:comemb}
Working with coefficients in a field $\k$, the inclusion $\tiplDa(\oc{n_1}{m_1}, \oc{n_2}{m_2}) \to \iplDa_{\Com}(\oc{n_1}{m_1}, \oc{n_2}{m_2})$ is a quasi-isomorphism.
\end{Cor}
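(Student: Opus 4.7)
The plan is to deduce the corollary from Theorem \ref{th:comiso} by the two-out-of-three property. By construction, the map $\widetilde{J}_\Com$ is the restriction of $J_\Com$ to $\tiplDa$, so we have a factorization
\begin{equation*}
\widetilde{J}_\Com \;=\; J_\Com \circ i : \; \tiplDa(\oc{n_1}{m_1}, \oc{n_2}{m_2}) \xrightarrow{\,i\,} \iplDa_\Com(\oc{n_1}{m_1}, \oc{n_2}{m_2}) \xrightarrow{\,J_\Com\,} \bNat_\Com(\oc{n_1}{m_1}, \oc{n_2}{m_2}),
\end{equation*}
in which $i$ is the inclusion under investigation and the composite is a quasi-isomorphism by Theorem \ref{th:comiso}. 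Hence it suffices to prove that $J_\Com$ itself is a quasi-isomorphism.

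To handle this key step I would invoke Lemma \ref{le:comnat}, which supplies a weak equivalence
\begin{equation*}
\Psi :\; \prod_{j_1, \cdots, j_{n_1}} \plDa_\Com\bigl(\oc{0}{j_1+ \cdots + j_{n_1}+m_1}, \oc{n_2}{m_2}\bigr)[n_1-\Sigma j_i] \xrightarrow{\ \simeq\ } \bNat_\Com(\oc{n_1}{m_1}, \oc{n_2}{m_2}).
\end{equation*}
Under $\Psi$, the map $J_\Com$ corresponds to the map
\begin{equation*}
\Theta :\; \iplDa_\Com(\oc{n_1}{m_1}, \oc{n_2}{m_2}) \longrightarrow \prod_{j_1, \cdots, j_{n_1}} \plDa_\Com\bigl(\oc{0}{j_1+ \cdots + j_{n_1}+m_1}, \oc{n_2}{m_2}\bigr)[n_1-\Sigma j_i]
\end{equation*}
sending an element $x$ to the family whose $(j_1,\ldots,j_{n_1})$-component is the composition $x \circ (l_{j_1} \amalg \cdots \amalg l_{j_{n_1}} \amalg \id_{m_1})$.

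The main obstacle, and what will require real work, is proving that $\Theta$ is a quasi-isomorphism. I expect $\Theta$ to in fact be an isomorphism of chain complexes, and I would argue as follows. By Remark \ref{rem:optype}, the composition $x \circ (l_{j_1} \amalg \cdots \amalg l_{j_{n_1}} \amalg \id_{m_1})$ vanishes unless $j_i \geq t_i$ for every $i$, where $(t_1,\ldots,t_{n_1})$ is the type of $x$; for $j_i \geq t_i$ the composition enumerates all order-preserving ways of distributing $j_i$ leaves along the $t_i$ irreducible loops at the $i$-th incoming white vertex. Since elements of $\iplDa_\Com$ are built (over a field) from the tree-and-loop data at the incoming white vertices plus a function recording how the inputs attach to the output vertices, one can recover the type-stratified components of $x$ from the family $(\Theta(x)_{j_1,\ldots,j_{n_1}})$ by an inclusion-exclusion procedure analogous to the one behind Definition \ref{def:non-cst}. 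The compatibility with differentials is automatic: both differentials are described by contracting neighboring leaves around the white vertices, and the simplicial boundary on the Hochschild side corresponds precisely to this operation on the looped-diagram side, as already observed in the proof of Lemma \ref{le:chcplx}. Once $\Theta$ is identified as a chain-level isomorphism, $J_\Com = \Psi \circ \Theta$ is a quasi-isomorphism, and the two-out-of-three property applied to $\widetilde{J}_\Com = J_\Com \circ i$ concludes the proof.
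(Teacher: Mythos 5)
Your overall strategy inverts the paper's logical order, and the step you flag as "the main obstacle" is in fact false as stated. You propose to first show that $J_\Com$ is a quasi-isomorphism and then deduce the corollary by two-out-of-three; but in the paper the implication runs the other way: the statement that $J_\Com:\iplDa_\Com \to \bNat_\Com$ is a quasi-isomorphism is itself deduced \emph{from} this corollary together with Theorem \ref{th:comiso} (it is the corollary immediately following \ref{cor:comemb}). Your route is not automatically circular, but it requires an independent proof that $J_\Com$ (equivalently your $\Theta$) is a quasi-isomorphism, and the argument you offer for that --- that $\Theta$ is an isomorphism of chain complexes --- cannot work. Already for $(\oc{1}{0},\oc{1}{0})$, the $j$-th component of the target, $\plDa_\Com(\oc{0}{j},\oc{1}{0})$, contains in its top degree all $j!$ arrangements of the $j$ labeled leaves around the white vertex (and in lower degrees all diagrams with nontrivial trees), whereas the image of $\Theta$ in that component is spanned by the at most $j$ elements $J(sh^0)_j,\dots,J(sh^{j-1})_j$. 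So $\Theta$ is far from surjective; its failure to be surjective while still inducing an isomorphism on homology is precisely the nontrivial content of Theorem \ref{th:comiso}, so there is no shortcut here. Your inclusion--exclusion remark at best gives injectivity.

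The paper's actual argument avoids $J_\Com$ entirely. It uses that the differential never contracts an irreducible loop going once around a whole white vertex, so that forgetting the loops gives an isomorphism of chain complexes $\plDa_{\Com}^{t_1,\dots,t_{n_1}}(\oc{n_1}{m_1},\oc{n_2}{m_2}) \cong \plDa_{\Com}(\oc{0}{m_1+n_1},\oc{n_2}{m_2})$, and likewise for the $\tplDa$ versions. This reduces the claim, one type at a time, to the case $n_1=0$, where by Lemma \ref{le:comnat} the complex $\bNat_\Com(\oc{0}{m_1+n_1},\oc{n_2}{m_2})$ is literally $\plDa_\Com(\oc{0}{m_1+n_1},\oc{n_2}{m_2})$ and $\widetilde{J}_\Com$ \emph{is} the inclusion in question; Theorem \ref{th:comiso} then says exactly that this inclusion is a quasi-isomorphism. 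Finally, since homology commutes with products, the quasi-isomorphism on each type component assembles to one on $\tiplDa \to \iplDa_{\Com}$. If you want to salvage your approach, you would need to replace the false isomorphism claim by this type-by-type reduction --- at which point you have reproduced the paper's proof and the detour through $J_\Com$ is unnecessary.
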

\begin{proof}
Since the differential does not contract a loop going around a whole vertex and all loops are irreducible and of this kind and there is an isomorphism of chain complexes
$\plDa_{\Com}^{t_1, \cdots, t_{n_1}}(\oc{n_1}{m_1}, \oc{n_2}{m_2}) \cong \plDa_{\Com} (\oc{0}{m_1+n_1}, \oc{n_2}{m_2})$ and similarly by the restriction of this isomorphism we have $\widetilde{pl\mathcal{D}}_{\Com}^{t_1, \cdots, t_{n_1}}(\oc{n_1}{m_1}, \oc{n_2}{m_2}) \cong \widetilde{pl\mathcal{D}}_{\Com} (\oc{0}{m_1+n_1}, \oc{n_2}{m_2})$. Moreover, by Lemma \ref{le:comnat} and the following constructions $\Nat_\Com(\oc{0}{m_1+n_1}, \oc{n_2}{m_2}) \cong  \plDa_{\Com} (\oc{0}{m_1+n_1}, \oc{n_2}{m_2})$ and the map $\widetilde{J}_\Com$ in this case is given by the embedding of $\widetilde{pl\mathcal{D}}_{\Com} (\oc{0}{m_1+n_1}, \oc{n_2}{m_2})$ into this complex. Since $\widetilde{J}_\Com$ is a quasi-isomorphism, the embedding 
\[\widetilde{pl\mathcal{D}}_{\Com} (\oc{0}{m_1+n_1}, \oc{n_2}{m_2}) \to \plDa_{\Com} (\oc{0}{m_1+n_1}, \oc{n_2}{m_2})\]
is a quasi-isomorphism and thus by the isomorphism of complexes stated above, so is
\[ \widetilde{pl\mathcal{D}}_{\Com}^{t_1, \cdots, t_{n_1}}(\oc{n_1}{m_1}, \oc{n_2}{m_2}) \to \plDa_{\Com}^{t_1, \cdots, t_{n_1}}(\oc{n_1}{m_1}, \oc{n_2}{m_2}).\] Since homology commutes with products, the map
\[\prod_{(t_1, \cdots, t_{n_1})} \widetilde{pl\mathcal{D}}_{\Com}^{t_1, \cdots, t_{n_1}}(\oc{n_1}{m_1}, \oc{n_2}{m_2})  \to \prod_{(t_1, \cdots, t_{n_1})}\plDa_{\Com}^{t_1, \cdots, t_{n_1}}(\oc{n_1}{m_1}, \oc{n_2}{m_2}) \]
is an isomorphism on homology and thus the corollary is proven.
\end{proof}

\begin{Cor}
The map $J_\Com: \iplDa_\Com \to \Nat_\Com$ is a quasi-isomorphism of dg-categories.
\end{Cor}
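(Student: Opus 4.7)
The plan is to exhibit the desired quasi-isomorphism as a consequence of the two results immediately preceding the corollary, rather than proving anything new from scratch. The map $\widetilde{J}_\Com$ is by construction the restriction of $J_\Com$ to the subcomplex $\tiplDa$, so for every pair of objects $\oc{n_1}{m_1}$ and $\oc{n_2}{m_2}$ we have a commutative triangle of chain complexes
\begin{equation*}
\xymatrix{ \tiplDa(\oc{n_1}{m_1}, \oc{n_2}{m_2}) \ar@{^{(}->}[r]^-{\iota} \ar[rd]_-{\widetilde{J}_\Com} & \iplDa_{\Com}(\oc{n_1}{m_1}, \oc{n_2}{m_2}) \ar[d]^-{J_\Com} \\ & \Nat_\Com(\oc{n_1}{m_1}, \oc{n_2}{m_2}) }
\end{equation*}
where $\iota$ is the inclusion of subcomplexes.

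First I would invoke Theorem \ref{th:comiso} to conclude that the diagonal map $\widetilde{J}_\Com$ is a quasi-isomorphism on each morphism complex, and Corollary \ref{cor:comemb} to conclude that the horizontal inclusion $\iota$ is a quasi-isomorphism on each morphism complex. The two-out-of-three property for quasi-isomorphisms of chain complexes over a field then immediately yields that $J_\Com$ induces an isomorphism on homology of every morphism complex. Since a functor of dg-categories is a quasi-isomorphism precisely when it is a bijection on objects (which $J_\Com$ is, being the identity on $\mathbb{N} \times \mathbb{N}$) and a quasi-isomorphism on each morphism complex, this gives the corollary.

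There is essentially no obstacle to overcome beyond checking that the triangle commutes, which is immediate from the definition of $\widetilde{J}_\Com$ as the restriction of $J_\Com$. I would also briefly note that $J_\Com$ is indeed a map of dg-categories, which was established in the paragraph preceding Theorem \ref{th:comiso} using Lemma \ref{le:comcomp} (all morphisms in $\iplDa_\Com$ are composable) together with the fact from Theorem \ref{Th:buildop2} that $J_{cFr}$ preserves composition of composable elements. The entire proof is therefore a short formal verification and can be written in a few lines.
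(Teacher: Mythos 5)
Your argument is exactly the paper's proof: factor $\widetilde{J}_\Com = J_\Com \circ \iota$, invoke Theorem \ref{th:comiso} and Corollary \ref{cor:comemb} to see both $\widetilde{J}_\Com$ and the inclusion are quasi-isomorphisms, and conclude by two-out-of-three, having already noted that $J_\Com$ is a dg-functor. Correct and the same route as the paper.
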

\begin{proof}
We have seen earlier that $J_\Com$ is a dg-functor. Thus we only need to show that it is a quasi-isomorphism. By Theorem \ref{th:comiso} and Corollary \ref{cor:comemb} the maps $\widetilde{J}_\Com$ and $i_\Com$ are quasi-isomorphisms. Furthermore, $J_\Com \circ i_\Com= \widetilde{J}_\Com$ and hence it follows that $J_\Com$ is a quasi-isomorphism for all morphism spaces.
\end{proof}

\section{The suspended cacti operad and its action}\label{sec:cacti}
In this section we define a subcategory $\plD_{cact}(n_1, n_2)$ of $\plD_+(\oc{n_1}{0}, \oc{n_2}{0})$ and show that it is quasi-isomorphic to a suspension of the cacti quasi-operad. We start with operadic constructions and definitions.

\subsection{Operadic constructions}
First we give some operadic tools. 
We only consider non-unital operads, i.e. operads indexed on the positive integral numbers. Furthermore, sometimes we will have to work with quasi-operads. A quasi-operad fulfills the same axioms as an operad beside associativity (for more details see \cite[Section 1]{kauf05}).

We work with (quasi-)operads in chain complexes, topological spaces and pointed topological spaces.

To switch between these, we need the following constructions:
\begin{Prop}
\begin{itemize}
\item For $\OP$ an operad in topological spaces with all structure maps proper, the level-wise one-point compactification $\OP^c$ is an operad in pointed spaces (cf. \cite[Prop. 4.1]{aron13}).
\item Let $\OP$ be an operad in topological spaces and $I$ an operadic ideal. Define $\OP/I$ in level $n$ to be the pointed space $\OP(n)/I(n)$ with basepoint $I(n)$. Then $\OP/I$ is an operad in pointed spaces.
\end{itemize}
\end{Prop}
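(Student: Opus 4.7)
The plan is to treat each clause separately, using in both cases that the relevant construction is a symmetric monoidal functor so that the operadic structure maps transport automatically, with only a bit of checking required for the base points and the replacement of product by smash product.

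For the first clause, I would recall that one-point compactification defines a functor $(-)^c$ from the category of locally compact Hausdorff spaces and proper maps to the category of pointed spaces, and moreover that this functor is strong symmetric monoidal: $(X \times Y)^c \cong X^c \wedge Y^c$ naturally, with the unit sent to $S^0$. Applying $(-)^c$ level-wise to the operad $\OP$, the properness hypothesis on the structure maps $\gamma : \OP(n) \times \OP(k_1) \times \cdots \times \OP(k_n) \to \OP(k_1+\cdots+k_n)$ guarantees that each $\gamma$ induces a pointed map $\gamma^c : \OP^c(n) \wedge \OP^c(k_1) \wedge \cdots \wedge \OP^c(k_n) \to \OP^c(k_1+\cdots+k_n)$. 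The associativity diagrams and the $\Sigma_n$-equivariance are then obtained by applying $(-)^c$ to the corresponding commutative diagrams in $\OP$, while the identity element $1 \in \OP(1)$ is sent to a non-basepoint element $1 \in \OP^c(1)$, which still acts as a two-sided operadic unit because $(-)^c$ is the identity on morphisms away from the added point.

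For the second clause, the hypothesis that $I \subseteq \OP$ is an operadic ideal means precisely that $\gamma(a; b_1,\dots,b_n) \in I$ whenever one of $a, b_1, \dots, b_n$ belongs to $I$. Therefore the composition map descends to a map of quotients $\OP(n)/I(n) \times \prod_i \OP(k_i)/I(k_i) \to \OP(k_1+\cdots+k_n)/I(k_1+\cdots+k_n)$ that sends any tuple with at least one coordinate at the basepoint to the basepoint. Such a map factors uniquely through the smash product, giving the desired pointed structure map. Associativity, equivariance, and the unit axiom then follow at once from the corresponding properties in $\OP$ together with the naturality of the projection $\OP \to \OP/I$.

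The main obstacle I anticipate is mostly bookkeeping in the first clause: I would need to ensure the setup really supplies the categorical framework for one-point compactification, namely that each $\OP(n)$ is locally compact Hausdorff so that $(-)^c$ is defined and strong symmetric monoidal on the subcategory of proper maps (equivalently, that properness of $\gamma$ implies $\gamma$ is a morphism in this domain category). The second clause is essentially formal once the universal property of the smash product of quotients is invoked. Neither step requires new ideas, so the proof should be short provided the monoidality of $(-)^c$ is cleanly cited.
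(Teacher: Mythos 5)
The paper does not actually prove this proposition: the first bullet is delegated to the citation of \cite[Prop. 4.1]{aron13} and the second is stated without argument, so there is no proof of the paper's to compare against. Your sketch supplies the standard argument and is correct in substance. Two point-set caveats deserve to be made explicit rather than deferred. For the first clause, the natural homeomorphism $(X\times Y)^c \cong X^c \wedge Y^c$ and the functoriality of $(-)^c$ on proper maps require the levels $\OP(n)$ to be locally compact Hausdorff; this hypothesis is absent from the statement, but it holds for the only operad the paper compactifies, namely $\D$ with $\D(n)=\mathring{\Delta}^{n-1}$, and you correctly flag this as the point needing verification. For the second clause, your appeal to ``the universal property of the smash product of quotients'' conceals the fact that in the category of all topological spaces a product of quotient maps need not be a quotient map, so continuity of the descended structure map $\OP(n)/I(n) \wedge \bigwedge_i \OP(k_i)/I(k_i) \to \OP(k)/I(k)$ is not purely formal; one either works in compactly generated spaces or notes that the paper only applies this construction to a closed subcomplex of a finite CW complex (the pair $\left|\plDa_{cact}(n,1)_\bullet\right|$, $\left|\plDc_{cact}(n,1)_\bullet\right|$), where the relevant quotients are by closed subspaces of compact Hausdorff spaces and everything is well behaved. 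With those provisos your argument is complete and fills a gap the paper leaves open.
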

Next we define some operads used later on:
\begin{Def}
The open simplex operad $\D$ is the topological operad with
$\D(n)=\mathring{\Delta}^{n-1}=\{(s_1, \cdots, s_n) \ | \ 0 < s_i < 1, \sum s_i=1\}$ the open standard $n-1$--simplex, with $\Sigma_n$--action given by permuting the coordinates and composition defined by
\begin{align*}
\circ_i: \mathring{\Delta}^{k-1} \times \mathring{\Delta}^{n-1} &\to \mathring{\Delta}^{n+k-2}\\ (t_1, \cdots t_k)
\circ_i (s_1, \cdots, s_n) &=(s_1, \cdots, s_{i-1}, s_i \cdot t_1, \cdots, s_i \cdot t_k, s_{i+1}, \cdots, s_n)
.\end{align*}

The sphere operad $\Sph$ is the one-point compactification of $\D$, i.e. $\Sph(n)=\D(n)^c$.

\end{Def}
Note that the operad $\D$ is homeomorphic to the scaling operad $\OR_{>0}$ defined in \cite[5.1.1]{kauf05} and given by $\OR_{>0}(n) =\R_{>0}^{n}$. The homeomorphism from $\OR_{>0}$ to $\D$  sends a tuple $(r_1, \ldots, r_n) \in \R_{>0}^{n}$ with $R=\sum r_i$ to $(\frac{r_1}{R}, \ldots, \frac{r_n}{R}) \in \mathring{\Delta}^{n-1}$. 

The operads $\D$ and $\Sph$ have also been defined in \cite{aron13} where they are denoted by $\Delta_1^{n-1}$ and $S$, respectively. There, it is also mentioned that one can use the sphere operad to define operadic suspension. To make this more precise, we recall:
\begin{Def}[{\cite[Section 7.2.2]{loda12}}]\label{def:susp}
The desuspension operad $\des$ is defined as the endomorphism operad of $s^{-1}\k$ with $s^{-1}$ the shift operator, thus
\[\des(n)=\hom((s^{-1}\k)^{\otimes n}, s^{-1}\k).\]

For a graded operad $\OO$ its operadic desuspension is defined as
\[s^{-1}\OO(n)=\des(n) \otimes \OO(n),\]
as the Hadamard tensor product of operads (cf. \cite[Section 5.3.3]{loda12}).
We define the twisted desuspension operad $\dest=\widetilde{H}_*(\Sph)$ the reduced homology of the sphere operad. This operad equals $\des(n)$ as a graded $\Sigma_n$--module, but the signs in the composition differ. We define the twisted operadic desuspension by
\[\widetilde{s}^{-1}\OO(n)=\dest(n) \otimes \OO(n).\]
\end{Def}
Defining the topological operadic desuspension of an operad $\OP$ in pointed spaces as $\Sph \wedge \OP$, on reduced homology one obtains $\widetilde{H}_*(\Sph \wedge \OP) \cong \widetilde{s}^{-1}\widetilde{H}_*(\OP)$.

\subsection{The cacti-like diagrams}%
In this section we define the different kinds of looped and Sullivan diagrams used later on. 

\begin{Def}\label{def:cactdiag}
An $\oc{n_2}{(0,n_1)}$--looped diagram $(\Gamma, \gamma_1, \cdots, \gamma_{n_1})$ belongs to $\plDa_{cact}(n_1,n_2)$ if it fulfills the following properties:
\begin{itemize}
\item The white vertices in $\Gamma$ are not connected, i.e. $\Gamma$ is the disjoint union of $n_2$ commutative diagrams.
\item The underlying commutative Sullivan diagram has a representation embeddable into the plane.
\item Every boundary segment of any white vertex in $\Gamma$ is part of exactly one loop $\gamma_i$ and all these loops $\gamma_i$ are irreducible and positively oriented.
\item If an arc component  (i.e. the connected components after removing the white vertex) has genus $g$ (as a graph) there are exactly $g$ constant loops attached to it.
\end{itemize} 
\end{Def}
This defines a complex, since by the irreducibility of the loops around the white vertex, genus in the graph can only be created if a loop is contracted completely and thus there is a constant loop belonging to the new genus.

Denote by $\plDc_{cact}(n_1, n_2)$ the subcomplex of partly constant diagrams (i.e. at least one of the loops is constant) which in particular contains all diagrams with genus in the arc components by the last condition in the definition. Let $\plD_{cact}(n_1, n_2)$ be the subcategory of $\plD(\oc{n_1}{0}, \oc{n_2}{0})$ given by the non-constant diagrams as introduced in Section \ref{sec:nonconst}. Since the white vertices are not connected, all singular vertices must be loop covering and hence by Corollary \ref{cor:loopcov} we have an isomorphism of dg-categories
\[\plD_{cact}(n_1, n_2)\cong \plDa_{cact}(n_1, n_2) / \plDc_{cact}(n_1, n_2).\]

%

%
 \begin{figure}[ht!]
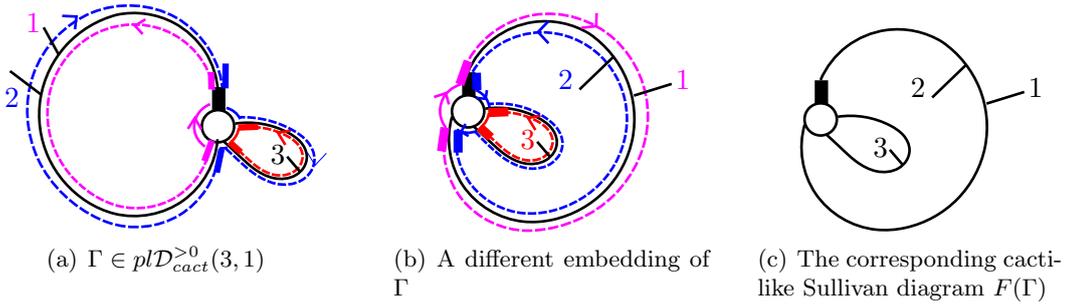

     \begin{center}
\subfigure
       [$\Gamma \in \plD_{cact}(3, 1)$]
        {
          
            \input{pic_cacti1}
	\label{fig:cacti}
           
        }
     \hspace{0.5cm}
\subfigure
       [A different embedding of $\Gamma$]
        {
          
            \input{pic_cacti2}
	\label{fig:cacti2}
           
        }
             \hspace{0.5cm}   
 \subfigure
       [The corresponding cacti-like Sullivan diagram $F(\Gamma)$]
        {
          
            \input{pic_cacti3}
	\label{fig:cacti3}
           
        }
     \hspace{0.5cm}
    \end{center}
   \caption{A looped diagram in $\plD_{cact}(3, 1)$, the embedding used to obtain a Sullivan diagram and the corresponding cacti-like Sullivan diagram}
   \label{fig:cactall}
\end{figure}

By the description every diagram is a disjoint union of looped diagrams with at least one incoming leaf and hence
\[\plDa_{cact}(n_1, n_2)= \coprod_{\substack{f: \{1, \ldots, n_1\} \to \{1, \cdots, n_2\}\\f \text { surj, } t_i:=|f^{-1}(i)|}} \plDa_{cact}(t_1, 1)  \times \cdots \times \plDa_{cact}(t_{n_2}, 1)\]
and similarly for $\plDc_{cact}(n_1, n_2)$  and $\plD_{cact}(n_1, n_2)$. 
Furthermore, the chain complexes $\plDa_{cact}(n, 1)$, $\plDc_{cact}(n, 1)$  and $\plD_{cact}(n, 1)$  define non-unital dg-operads. 
An example of an element in  $\plD_{cact}(3, 1)$ is given in Figure \ref{fig:cacti}.

Before we give an actual definition of the cacti operad, we define the complex of cacti-like $\oc{1}{n}$--Sullivan diagrams:
\begin{Def}
A Sullivan diagram $\Gamma \in \oc{1}{n}-\text{Sullivan diagrams}$ is \emph{cacti-like} if
\begin{itemize}
\item it is embeddable into the plane,
\item the arc components have genus zero as graphs (i.e. viewed as CW-complexes they are contractible),
\item the diagram has exactly $n$ boundary cycles, each labeled by a leaf.
\end{itemize}
\end{Def}

In a first step, we show that on the level of diagrams we have a bijection:
\begin{Lemma}\label{le:bijsul}
We have a bijection
\[K:\{\text{Cacti-like } \ \oc{1}{n}-\text{Sullivan diagrams}\} \longrightarrow 
\left\{ \begin{array}{l}
\text{non-constant looped diagrams} \\(\Gamma, \gamma_1, \cdots, \gamma_n) \in \plDa_{cact}(n,1)
\end{array}\right\}
\]
with $K$ the map described at the end of Section \ref{sec:comsul} forgetting about the cyclic ordering at the black vertices though remembering a loop for each boundary cycle.
\end{Lemma}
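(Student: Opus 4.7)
The plan is to construct an explicit inverse to $K$ and verify both compositions. First, I would check that $K$ is well-defined, that is, that it indeed lands in the stated subcomplex. Starting with a cacti-like $\oc{1}{n}$--Sullivan diagram $S$, forgetting the ordering at the black vertices yields a commutative Sullivan diagram $\Gamma$, and following each of the $n$ labeled boundary cycles of $S$ starting at its labeled leaf produces a loop $\gamma_i$ (positively oriented since we follow the boundary with its natural orientation). One checks that each boundary segment at a white vertex is traversed by exactly one $\gamma_i$ (this is what it means to be a boundary cycle), that each loop is irreducible (since the cacti-like condition implies each boundary cycle visits each arc component at most once), that the underlying graph is embeddable in the plane, and that the genus zero condition on arc components translates into the condition on constant loops (there being none in this case, since we start from a graph with no genus).

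Next, I would construct an inverse map $L$. Given $(\Gamma, \gamma_1, \ldots, \gamma_n) \in \plDa_{cact}(n,1)$ I need to assign a cyclic ordering at each black vertex so that the resulting fat graph is a Sullivan diagram with boundary cycles matching the $\gamma_i$. Fix a planar embedding of $\Gamma$ (which exists by assumption) and read off the cyclic order at each black vertex from the embedding; this is a fat graph structure and thus has boundary cycles. The condition that each boundary segment at a white vertex is covered by exactly one loop, together with contractibility of the arc components, forces these boundary cycles to coincide (as sets of segments and corners) with the $\gamma_i$, so we obtain a genuine Sullivan diagram with the prescribed loops as its boundary cycles. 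I would then verify that the result is independent of the chosen planar embedding modulo the sliding relation of Figure \ref{fig:equiv}: any two planar embeddings of the same planar commutative Sullivan diagram differ by reflections at trivalent vertices and swaps of the two cyclic orderings at $4$--valent configurations built from two trivalent vertices, which is exactly the sliding relation.

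Finally, I would verify that $K$ and $L$ are mutually inverse. The composition $K\circ L$ returns the original looped diagram by construction, since the loops of $L(\Gamma,\gamma_1,\ldots,\gamma_n)$ are by definition the $\gamma_i$ and the underlying commutative Sullivan diagram is $\Gamma$. For $L\circ K$, starting from a cacti-like Sullivan diagram $S$, applying $K$ records the boundary cycles as loops and forgets the ordering; applying $L$ then recovers an ordering from these boundary cycles and a planar embedding, which agrees with the original ordering up to the sliding relation, hence yields back $S$ as a Sullivan diagram.

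The main obstacle is the well-definedness of $L$: showing that the cyclic ordering obtained from a planar embedding really does reproduce the prescribed loops as its boundary cycles, and that different planar embeddings of the same commutative Sullivan diagram produce Sullivan diagrams equivalent under sliding. At each trivalent black vertex the three half-edges come with three local corners, each lying in some $\gamma_i$; a short case analysis together with the contractibility of arc components and the cacti-like covering condition forces these local assignments to fit together consistently into boundary cycles matching the $\gamma_i$. Handling the ambiguities in the planar embedding via the sliding relation is the essential combinatorial content of the lemma.
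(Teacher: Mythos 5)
Your proposal is correct and follows essentially the same route as the paper: both construct an explicit inverse by producing a planar embedding of the commutative Sullivan diagram and reading off the fat-graph structure at the black vertices, with the well-definedness of that embedding (up to the sliding relation) identified as the crux of the argument. The paper pins this down slightly more concretely by normalizing the embedding so that the last boundary segment of the white vertex lies on the outside of the graph (reducing uniqueness to the combinatorics of non-crossing partitions of $\{0,\ldots,d\}$) and by explicitly placing each labeled leaf inside its loop, which turns your ``independence of the chosen embedding'' step into a uniqueness statement.
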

\begin{proof} The map $K$ has an inverse $F$ which can be described as follows: After forgetting the labeled leaves, for any looped diagram $(\Gamma, \gamma_1, \cdots, \gamma_{n}) \in \plDa_{cact}(n, 1)$ with no constant loops the commutative Sullivan diagram $\Gamma$ can be uniquely (up to the equivalence relation on Sullivan diagrams) embedded into the plane such that the last segment of the white vertex is on the outside of the graph. To see so, one verifies that the data of a commutative Sullivan diagram of degree $d$ without genus in the arc components is equivalent to dividing $\{0, \cdots, d\}$ into a disjoint union of subsets and connecting all elements of one subset by an arc component. This becomes unique if the diagram was embeddable into the plane before and if we glue onto an interval instead of a circle (which we do by the condition that the last boundary segment lies outside the graph). To define the inverse map $F$, for a  looped diagram $(\Gamma, \gamma_1, \cdots, \gamma_{n}) \in \plDa_{cact}(n, 1)$ we then choose the labeled leaves  to be inside the loop which starts at this leaf (this is possible since the loops do not overlap). It is not hard to see that after forgetting the loops this embedding into the plane gives a cacti-like $\oc{1}{n}$--Sullivan diagram. An example of the chosen embedding of the diagram in Figure \ref{fig:cacti} and the corresponding cacti-like Sullivan diagram are shown in Figure \ref{fig:cactall}.
By definition, the composition $F \circ K$ is the identity. In order to see that $K \circ F$ is the identity too, one checks that every  cacti-like $\oc{1}{n}$--Sullivan diagram can be embedded into the plane such that the last segment of the white vertex is on the outside of the diagram. Choosing this embedding and using the fact that $F$ is well-defined, i.e. independent of the embedding, it follows directly that $K \circ F$ is the identity.
\end{proof}

\subsection{The cacti operad}

In this section we recall the definition of the (normalized) Cacti operad with spines. The original definition goes back to Voronov in \cite{voro05}. For an overview over different definitions of cacti see \cite{kauf05}. We use the definition given in \cite[Section 2.2]{cohe05}.

An element in  $Cacti(n)$ is given by a treelike configuration of $n$ labeled circles with positive circumferences $c_i$ such that $\sum c_i =1$ (usually one uses the radii, but for our setup working with the circumferences immediately is easier) together with the following data: (1) A cyclic ordering at each intersection point, (2) the choice of a marked point on each circle and (3) the choice of a global marked point on the whole configuration together with a choice of a circle this point lies on.  Treelike means that the dual graph of this configuration, whose vertices correspond to the lobes and which has an edge whenever two circles intersect, is a tree. In the normalized cacti $Cacti^1(n)$ all circles have the same radius/circumference. This is only a pseudo-operad, since associativity fails (cf. \cite[Remark 2.9.19]{kauf05}).

\begin{figure}[!ht]

 \center
\input{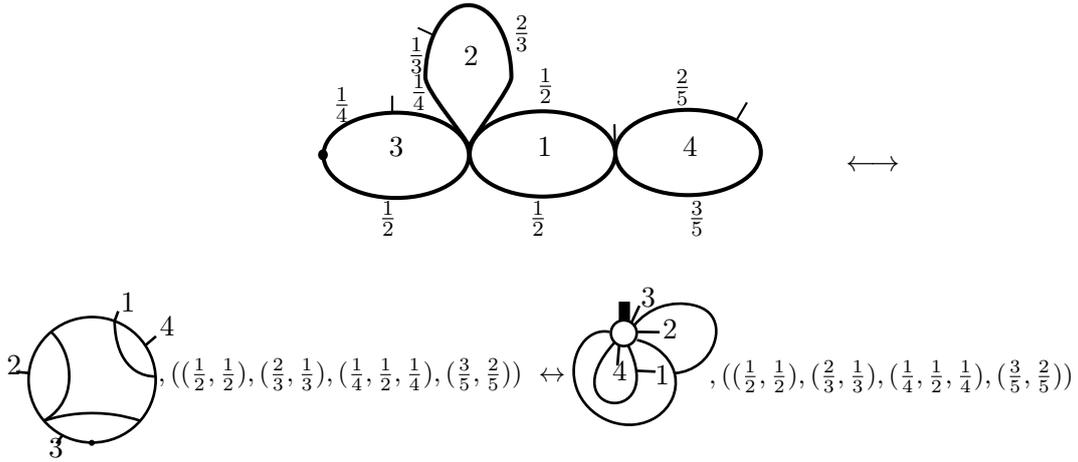}

	\caption{Three different representations of the same element in $Cacti^1(4)$}

		\label{fig:cactireal}
\end{figure}

In \cite{lupe08} it is shown that $Cact$ can be equivalently defined using the space of chord diagrams. To avoid even more notation, we apply the equivalence of Sullivan chord diagrams and Sullivan diagrams as defined in Section \ref{sec:graphs} and  give a description of $Cacti^1$ and $Cacti$ in these terms:
$Cact^1(n)$ is a finite CW complex with cells $\Delta^{f_\Gamma(1)-1} \times \cdots \times \Delta^{f_\Gamma(n)-1}$ for each $\Gamma$ a cacti like $\oc{1}{n}$--Sullivan diagram whose boundary cycle belonging to the $i$--th labeled incoming leaf consists of $f_\Gamma(i)$ pieces. Thus the $t_1^j, \ldots, t_{f_\Gamma(j)}^j$ give the lengths of the pieces of a lobe starting at the spine (marked point) of the lobe. 
Hence we have an isomorphism 
\[Cacti^1(n) \cong \bigcup_{\Gamma \in \oc{1}{n}-\text{Sullivan diagrams, cacti- like}} \Delta^{f_\Gamma(1)-1} \times \cdots \times  \Delta^{f_\Gamma(n)-1}\]
where $\Delta^m$ is the standard simplex.
The correspondence is shown in Figure \ref{fig:cactireal}, where we have drawn an element in $Cacti^1(4)$ first as an ordinary cactus, then as a Sullivan chord diagram with the corresponding arc lengths in $\Delta^{1} \times \Delta^1 \times \Delta^2 \times \Delta^1$ (i.e. the description corresponding to \cite{lupe08}) and then in terms of the definition given in the lines above (Sullivan diagram with arc lengths in $\Delta^{1} \times \Delta^1 \times \Delta^2 \times \Delta^1$).

The attaching maps identify an element $(\Gamma, ((t_1^1, \ldots, t_{f_\Gamma(1)}^1), \ldots, (t_1^n, \ldots, t_{f_\Gamma(n)}^n)))$ with one of the $t_i^j=0$ with the pair $(\Gamma', ((t_1^1, \ldots, t_{f_\Gamma(1)}^1), \ldots, t_{i-1}^j,t_{i+1}^j, \ldots, t_{f_\Gamma(n)}^n)))$ where $t_i^j$ is omitted and $\Gamma'$ is the Sullivan diagram where the $i$--th boundary segment belonging to the boundary cycle labeled by the $j$--th leaf is contracted. In Figure \ref{fig:equivcacti} we have given an example, where in the left representative the length $t_3^3$ (so the last length belonging to the boundary cycle labeled by the leaf $3$) equals zero and in the right representative the last segment of the boundary cycle of the diagram (which in this case is the first boundary segment belonging to white vertex) got contracted.

\begin{figure}[!ht]

 \center
\input{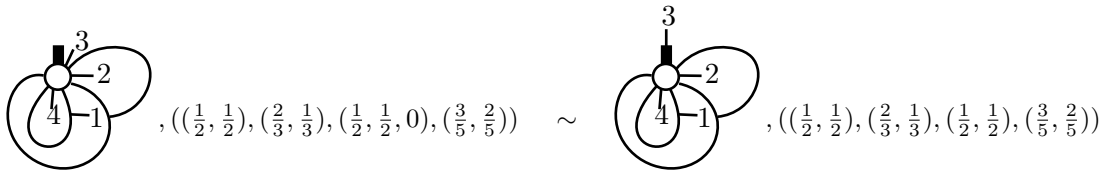}

	\caption{The gluing relation in $Cacti^1(4)$}
		\label{fig:equivcacti}
\end{figure}

As a space, we have $Cacti(n) \cong Cacti^1(n) \times \mathring{\Delta}^{n-1}$ where the extra parameters specify the lengths of the loops. So we get
\[Cacti(n) \cong \bigcup_{\Gamma \in \oc{1}{n}-\text{Sullivan diagrams,  cacti- like}}\Delta^{f_\Gamma(1)-1} \times \cdots \times  \Delta^{f_\Gamma(n)-1} \times \mathring{\Delta}^{n-1}\]
and the equivalence relation coming from the one in $Cacti^1(n)$.

By \cite[Cor. 5.2.3]{kauf05} the homology of the quasi-operad $Cacti^1$ is equivalent to the homology of $Cacti$ as a graded operad. As announced in \cite[Theorem 2.3] {voro05} (see \cite[Theorem 5.3.6]{kauf05} for a complete proof), the cacti operad is homotopy equivalent to the framed little disc operad, and by \cite{getz94} the singular homology of the framed little disc operad is isomorphic as an algebraic operad to the BV-operad. Thus,
\[H_*(Cacti^1) \cong BV\]
as an equivalence of graded operads.

\subsection{Result and proof}
Now we have given enough definitions to state the main theorem of the section:
\begin{Theorem}\label{Th:shiftbv}
The complex $\plD_{cact}(n, 1)$ is a chain model for the twisted operadic desuspended BV-operad,
i.e. \[H_*(\plD_{cact}(-,1)) \cong \widetilde{s}^{-1}BV\]
as graded operads.
\end{Theorem}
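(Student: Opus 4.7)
The plan is to realize $\plD_{cact}(-,1)$ as the reduced cellular chain complex of a topological operad homeomorphic to the Hadamard smash product $Cacti^c \wedge \Sph$ of a suitable pointed version of the Cacti operad with the sphere operad, and then to compute its homology via the known equivalence $H_*(Cacti) \cong BV$.

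First I would introduce a topological operad $\plDa_{cact}^{top}$ whose underlying space at arity $n$ is a CW complex with one open cell for each non-constant element $(\Gamma,\gamma_1,\dots,\gamma_n) \in \plDa_{cact}(n,1)$. To such a diagram, assign the open cell
\[
\mathring{\Delta}^{f_\Gamma(1)-1}\times\cdots\times\mathring{\Delta}^{f_\Gamma(n)-1} \times \mathring{\Delta}^{n-1}
\]
where the first $n$ factors record the (normalized) lengths of the boundary segments of the white vertex grouped by the loops $\gamma_i$, and the last factor $\mathring{\Delta}^{n-1}$ records the (normalized) total lengths of the loops $\gamma_1,\dots,\gamma_n$. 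The closure maps glue these cells by letting a segment length $t_i^j$ tend to zero (which identifies neighbored boundary pieces, i.e.\ performs the blow-up of the underlying commutative Sullivan diagram) or by letting a loop-length $s_i$ tend to zero (which forces $\gamma_i$ to become constant, hence sends the whole cell to the basepoint, matching the quotient $\plDa_{cact}/\plDc_{cact} = \plD_{cact}$).

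Next I would show that $\plDa_{cact}^{top}$ is homeomorphic to the Hadamard smash product $Cacti^c \wedge \Sph$ of topological operads in pointed spaces, using the cell decomposition
\[
Cacti(n) \cong \bigcup_{\Gamma\text{ cacti-like } \oc{1}{n}\text{-SD}} \Delta^{f_\Gamma(1)-1}\times\cdots\times\Delta^{f_\Gamma(n)-1}\times\mathring{\Delta}^{n-1}
\]
recalled above, together with the bijection $F$ of Lemma \ref{le:bijsul} identifying cacti-like $\oc{1}{n}$-Sullivan diagrams with non-constant looped diagrams in $\plDa_{cact}(n,1)$. The $\mathring{\Delta}^{n-1}$ factor parametrizing loop-lengths is precisely $\D(n)$, and one-point compactifying it against the rest of the structure yields the $\Sph$ factor. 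The operadic composition in $Cacti$ (which rescales and glues lobes) translates, under this cell-by-cell identification, to the composition in $\plDa_{cact}$ defined in Definition \ref{def:lD}: gluing the $n_2$ white vertices of an element of $\plDa_{cact}(n_1,n_2)$ onto the loops of a configuration in $\plDa_{cact}(n_2,1)$ corresponds to inserting cactus lobes along the spine in a way parametrized precisely by the segment-length data.

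Finally, the cellular chain complex of a CW operad in pointed spaces is an operad of chain complexes whose homology is the reduced singular homology. Since partly-constant diagrams correspond to cells mapped to the basepoint, the cellular complex at arity $n$ is exactly $\plD_{cact}(n,1)$, with cellular differential matching the blow-up differential on looped diagrams. Taking homology yields
\[
H_*(\plD_{cact}(n,1)) \;\cong\; \widetilde{H}_*\bigl(Cacti(n)^c \wedge \Sph(n)\bigr) \;\cong\; \dest(n)\otimes H_*(Cacti(n)) \;\cong\; \widetilde{s}^{-1}BV(n),
\]
where the middle isomorphism uses that $\widetilde{H}_*(\Sph)=\dest$ and the Künneth formula, and the last uses Voronov's equivalence $H_*(Cacti) \cong BV$ as graded operads together with Definition \ref{def:susp} of the twisted operadic desuspension.

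The main obstacle will be verifying that the operad structures match up coherently, especially the signs: it must be checked that the Hadamard smash with $\Sph$ (rather than the untwisted operadic suspension) produces the signs coming from orientations of looped diagrams, and hence yields $\widetilde{s}^{-1}BV$ as opposed to $s^{-1}BV$. The sign computation reduces to comparing the canonical orientation of a cacti-like looped diagram (from the ordering of white vertices and choice of start half-edge) with the product orientation on the simplices, twisted by the orientation of the loop-length simplex, and showing that these agree up to the twist built into $\dest$.
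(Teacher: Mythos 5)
Your overall strategy is the same as the paper's: realize $\plD_{cact}(-,1)$ as (reduced, cellular) chains on a compactified cacti operad, use the sphere operad to account for the desuspension, and finish with $H_*(Cacti)\cong BV$ and the K\"unneth isomorphism. The cell decomposition you describe, $\mathring{\Delta}^{f_\Gamma(1)-1}\times\cdots\times\mathring{\Delta}^{f_\Gamma(n)-1}\times\mathring{\Delta}^{n-1}$, is exactly the paper's identification of $\left|\plDa_{cact}(n,1)_\bullet\right|\setminus\left|\plDc_{cact}(n,1)_\bullet\right|$ with $Cacti(n)$, and sending partly constant diagrams to the basepoint matches the one-point compactification.

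There is, however, a genuine gap at the step where you claim the resulting topological operad is the Hadamard smash product of a pointed cacti operad with $\Sph$. Two problems. First, as written, $Cacti^c\wedge\Sph$ is dimensionally wrong: $Cacti(n)\cong Cacti^1(n)\times\mathring{\Delta}^{n-1}$ already contains the loop-length simplex, so $Cacti^c\cong Cacti^1_+\wedge\Sph$ and smashing with $\Sph$ again would produce a double desuspension; correspondingly your displayed isomorphism $\widetilde{H}_*(Cacti(n)^c\wedge\Sph(n))\cong\dest(n)\otimes H_*(Cacti(n))$ is not correct if $Cacti^c$ denotes the compactification of the full (non-compact) cacti operad. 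Second, and more substantively, even after fixing the notation to $Cacti^1_+\wedge\Sph$, the operad structure on $Cacti^c$ is \emph{not} that of the plain smash product: composing cacti rescales the inserted normalized cactus by the length data of the receiving lobe, so what you get is the semi-direct smash product $\Sph\ \overset{\ltimes}{\wedge}\ Cacti^1_+$, in which the composition mixes the two factors. The K\"unneth isomorphism then gives the correct homology groups but does not by itself identify the operad structure with $\dest\otimes H_*(Cacti^1)$. The paper closes this gap by invoking Kaufmann's decomposition $Cacti\cong\D\ltimes Cacti^1$ and an explicit homotopy (moving the scaling parameter $d'$ along a straight line to the barycenter $(1/k,\dots,1/k)$) showing that the twisted and untwisted compositions are homotopic as quasi-operad structures; one also needs Kaufmann's result that the homology of the quasi-operad $Cacti^1$ agrees with that of $Cacti$ as a graded operad. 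Your closing remark about signs points at a real issue, but the essential missing ingredient is this rectification homotopy, not just an orientation comparison.
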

Before we prove the theorem, we first want to point out the consequence for the operations on Hochschild homology:
\begin{Cor}\label{Cor:action}
There is a desuspended BV-algebra structure on the Hochschild homology of a commutative cocommutative open Frobenius dg-algebra (in particular on the Hochschild homology of a commutative Frobenius dg-algebra) which comes from an action of a chain model of the suspended Cacti operad on the Hochschild chains. 
\end{Cor}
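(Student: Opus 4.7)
The plan is to combine Theorem \ref{Th:shiftbv} with the action functor $J$ constructed in Theorem \ref{Th:buildop}. The key point is that $\plD_{cact}(n,1)$ is by construction a subcomplex of $\lDa_+(\oc{n}{0},\oc{1}{0})$, and, as already noted at the end of Section \ref{sec:cacti} (just before Definition \ref{def:cactdiag} in spirit), the collection $\{\plD_{cact}(n,1)\}_{n\ge 1}$ is closed under the operadic composition inherited from $\lDa_+$, hence forms a dg-suboperad. The first step is to record this observation precisely: given $x\in \plD_{cact}(n,1)$ and $y\in \plD_{cact}(k,1)$, their $\circ_i$-composition in $\lDa_+(\oc{n+k-1}{0},\oc{1}{0})$ again satisfies the four bullet-point conditions of Definition \ref{def:cactdiag} (planarity is preserved since we substitute a planar diagram into an arc-component of a planar diagram; loops remain irreducible and positively oriented because gluing along a loop $\gamma_i$ of $y$ adds boundary segments that are covered by the loops of $x$; and the genus condition on arc components together with the constant-loop count is preserved by inspection).

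Next I would feed this suboperad into the dg-functor
\[
J:\lDa_+ \longrightarrow \bNat_{cFr_+}
\]
of Theorem \ref{Th:buildop}. Since $J$ is a functor of dg-categories and is the identity on objects, its restriction yields an operad map
\[
J\big|_{\plD_{cact}}:\plD_{cact}(-,1)\longrightarrow \bNat_{cFr_+}(\oc{-}{0},\oc{1}{0}),
\]
and via the identification of formal operations with natural transformations of the Hochschild complex this produces, for every commutative cocommutative open Frobenius dg-algebra $A$, a natural chain-level action
\[
C_*(A,A)^{\otimes n}\otimes \plD_{cact}(n,1)\longrightarrow C_*(A,A).
\]
By construction this action is compatible with the operadic composition of $\plD_{cact}(-,1)$, so $C_*(A,A)$ is a dg-algebra over the dg-operad $\plD_{cact}(-,1)$.

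Finally, passing to homology and using Theorem \ref{Th:shiftbv}, the induced action on $HH_*(A,A)$ factors through
\[
H_*(\plD_{cact}(-,1))\;\cong\;\widetilde{s}^{-1}BV
\]
as graded operads. Thus $HH_*(A,A)$ carries the structure of an algebra over the twisted operadically desuspended BV-operad, which is exactly the desuspended BV-structure claimed. Specializing to the case where $A$ is in addition counital (i.e.\ a commutative Frobenius dg-algebra) gives the parenthetical statement, since then $\lDa_+\subset \lDa$ and the action extends automatically.

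The substantive verification in this plan is not the passage to homology (which is immediate from Theorem \ref{Th:shiftbv}) nor the existence of the action (which is Theorem \ref{Th:buildop}), but rather the bookkeeping showing that $\plD_{cact}(-,1)$ really is closed under $\circ_i$ inside $\lDa_+$. The potential subtlety is the genus/constant-loop condition: one must check that when an arc component of $y$ is ``opened up'' by the insertion of $x$ along a loop $\gamma_i$, any new genus created comes accompanied by the correct number of constant loops, and conversely that no new non-constant genus is produced. This reduces to the combinatorial observation that planar composition along an irreducible positively oriented loop preserves planarity and tree-like arc structure, with any extra cycles arising only through the genus-generating constant loops already present in $x$ or $y$; this can be verified cell-by-cell on the building blocks used to describe $\plD_{cact}$, after which the rest of the argument is formal.
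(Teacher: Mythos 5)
Your proposal is correct and follows essentially the same route the paper intends: the corollary is stated as an immediate consequence of Theorem \ref{Th:shiftbv} together with the restriction of the dg-functor $J:\lDa_+\to\bNat_{cFr_+}$ of Theorem \ref{Th:buildop} to the suboperad $\plD_{cact}(-,1)$, whose closure under composition the paper asserts in Section \ref{sec:cacti}. The only point you gloss over that the paper handles explicitly is that composition in the non-constant complex $\plD$ is a priori only the operation $\tilde{\circ}$, and one needs Corollary \ref{cor:loopcov} (all singular vertices of a cacti-like diagram are loop-covering, since the white vertices are disconnected) to identify it with the genuine composition inherited from $\lDa_+$.
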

More precisely, the product is the product given by the action of the looped diagram shown in Figure \ref{fig:prod}. This BV-structure agrees with the one given in \cite[Section 7]{abba13} and \cite[Section 6]{abba13b}.

To avoid introducing more notation, we write $\plDa_{cact}(n,1)_\bullet$ and $\plDc_{cact}(n,1)_\bullet$ for the associated simplicial sets of the chain complexes (where we allow unlabeled incoming leaves at any point of the white vertex, cf. the proof of Lemma \ref{le:chcplx}) and take their geometric realization. We define an operad structure on this space and show that on homology we have an isomorphism of operads $H_*(\plDa_{cact}(-,1) / \plDc_{cact}(-,1))\cong H_*(\left|\plDa_{cact}(-,1)_\bullet\right| ,  \left|\plDc_{cact}(-,1))_\bullet \right|)$.

Write $X_\bullet=\plDa_{cact}(-,1)_\bullet$. We first explain why we do not need to care about degenerate simplices (i.e. diagrams with unlabeled leaves attached to the white vertex at other positions than the start half-edge). Denote the non-degenerate part by $X^{non-deg}_\bullet$. Since the boundary of a non-degenerate element in $X_\bullet$ is non-degenerate, the simplicial realization is the realization of $X^{non-deg}_\bullet$ as a semi-simplicial set, i.e.
\[|X_\bullet| \cong \coprod_k (X^{non-deg}_k \times \Delta^k) / \sim\]
with $(x, \delta^i y) \sim (d_i x, y)$.

So an element in the geometric realization is an equivalence class of a looped diagram together with an assignment of lengths to each piece of the white vertex. Thus a point in the geometric realization is a diagram where we consider the white vertex as a circle of length one up to the equivalence relation of rescaling and sliding the arc components. For composing $y$ onto the $i$--th loop of $x$ we assume that the boundary segment belonging to the $i$--th loop of $x$ have lengths $(s_1, \ldots, s_r)$. Then composition onto the $i$--th loop rescales the white vertex of $y$ to lengths $\sum s_i$ and glues the diagram into these pieces gluing the start half-edge onto the labeled leaf and considering the arc components to have lengths zero. This is similar to the gluing in the arc complex and fat graph considered in \cite{penn87}, \cite{kauf03} and others. An example of such a composition is shown in Figure \ref{fig:cacticomp}, where we glue an element in $|\plDa_{cact}(2,1)_\bullet|$ onto the second loop of an element in $|\plDa_{cact}(3,1)_\bullet|$ and thus obtain an element in $|\plDa_{cact}(4,1)_\bullet|$. We have written the lengths of the boundary segments of the white vertices next to the vertex in the picture. To stay with the notation we use in operads we write $y \circ_i x$ if we glue $y$ onto the $i$--th loop of $x$ (i.e. we switch the order against the ordinary gluing in the chain complex of looped diagrams).

\begin{figure}[!ht]

 \center
\input{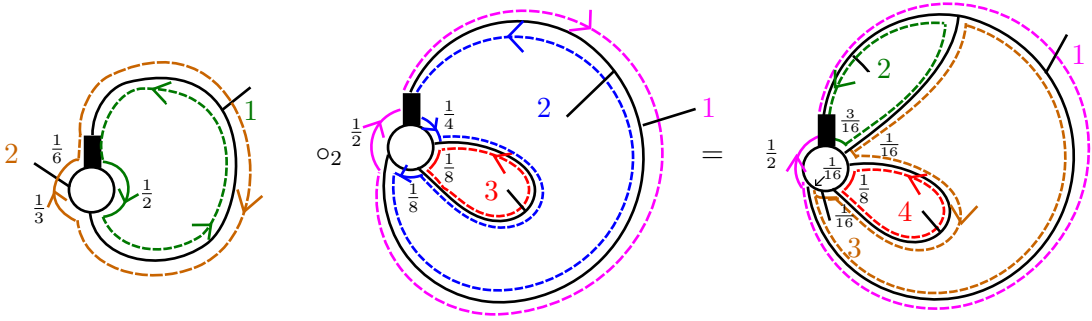}
	\caption{An example of the composition in $|\plDa_{cact}(-,1)_\bullet|$, gluing the left element onto the second (blue) loop of the right one}
		\label{fig:cacticomp}
\end{figure}

\begin{Lemma}
With the above operad structure on $\plDa_{cact}(-,1)$, we obtain an isomorphism of operads
\[H_*(\plDa_{cact}(-,1) / \plDc_{cact}(-,1))\cong H_*(\left|\plDa_{cact}(-,1)_\bullet\right| ,  \left|\plDc_{cact}(-,1))_\bullet \right|) .\]
\end{Lemma}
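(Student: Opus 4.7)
The plan is to recognize both sides as presenting the same data in two guises: the left as the reduced chain complex of a simplicial set, and the right as the singular homology of its CW realization. First I would verify that $\plDa_{cact}(n,1)$ is exactly the reduced chain complex associated to the simplicial set $\plDa_{cact}(n,1)_\bullet$ (with face maps contracting neighbored boundary segments and degeneracies inserting unit half-edges, as in the proof of Lemma \ref{le:chcplx}), and that $\plDc_{cact}(n,1)$ is the reduced chain complex of the simplicial subset $\plDc_{cact}(n,1)_\bullet$. This requires only checking that the cacti-like conditions of Definition \ref{def:cactdiag} are preserved by the face maps, which is immediate since embeddability into the plane and the genus-zero condition on arc components are stable under contracting boundary segments (and the count of constant loops attached to each genus is monitored by the partly-constant subcomplex).

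Second, I would invoke the classical comparison between the normalized chain complex of a simplicial set and the singular chains of its geometric realization, giving $H_*(\plDa_{cact}(n,1)) \cong H_*(|\plDa_{cact}(n,1)_\bullet|)$ and similarly for the subcomplex. Since $|\plDc_{cact}(n,1)_\bullet|$ is a CW subcomplex of $|\plDa_{cact}(n,1)_\bullet|$, the short exact sequence of chain complexes $0 \to \plDc_{cact} \to \plDa_{cact} \to \plDa_{cact}/\plDc_{cact} \to 0$ induces a long exact sequence matching the long exact sequence of the CW pair, so the desired isomorphism of graded abelian groups follows from the five lemma.

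Third, I would verify that the operad structures match. On the topological side, composition rescales the outgoing white vertex of $y$ to have circumference equal to the length of the $i$-th lobe of $x$ and glues the half-edges of $y$ into the marked circle; this is a cellular map from the product of the open cells for $x$ and $y$ into $|\plDa_{cact}_\bullet|$. The induced map on cellular (equivalently normalized) chain complexes sends a product cell to the single top-dimensional cell indexed by the interleaved data, while the composition in Definition \ref{def:lD} sums over all combinatorial orderings by which the half-edges of $y$ can be distributed along the $i$-th loop of $x$. Under the Eilenberg--Zilber shuffle map taking a product of simplices to a signed sum of top-dimensional simplices in the product, these two formulas agree, and together with the computation of signs from the canonical orientations of the graphs and the orientations of simplex products, we obtain an operad isomorphism on homology.

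The main obstacle will be the careful bookkeeping of signs: matching the graph-orientation signs governing the chain-level composition in Definition \ref{def:lD} with the simplex-orientation signs inherent in the Eilenberg--Zilber shuffle decomposition of the topological composition. A secondary subtlety is that the topological composition only rescales lobe circumferences correctly when no lobe is collapsed; a lobe collapse corresponds exactly to landing in $\plDc_{cact}$ (since a loop of zero length must be a constant loop), which is why quotienting by $\plDc_{cact}$ on the chain side corresponds precisely to passing to the relative homology on the topological side.
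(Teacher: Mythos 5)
Your proposal is correct and follows essentially the same route as the paper: identify the reduced chain complex with the cellular chains of the realization, observe that the topological composition is cellular and that its effect on a product cell is the signed sum over all shuffled gluings (matching the combinatorial composition), and use that both subcomplexes are operadic ideals. The only cosmetic difference is that you pass through the long exact sequence and the five lemma to compare relative homologies, whereas the paper gets this directly from the fact that the normalized-chains/cellular-chains comparison is already an isomorphism of chain complexes, hence descends to the quotients.
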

\begin{proof}
Given a simplicial set $X_\bullet$, we have an isomorphism of chain complexes $\overline{C}_*(X_\bullet) \cong C^{CW}_*(|X_\bullet|)$ where $\overline{C}_*$ denotes the normalized chains whereas $C^{CW}_*$ stands for the cellular chains of a complex. In our situation this induces isomorphisms $\plDa_{cact}(n, 1) \cong C^{CW}_*(|\plDa_{cact}(n, 1)_\bullet|)$ and $\plDc_{cact}(n, 1) \cong C^{CW}_*(|\plDc_{cact}(n, 1)_\bullet|)$. Since the operadic composition on the geometric realization is a cellular map we get an induced composition on the cellular complexes. Hence we only have to check that on homology the composition commutes with the isomorphisms.

A diagram $x \in \plDa_{cact}(n, 1)$ of degree $k$ is mapped to the cell given by the $x \times \Delta^k$. Given a cell $x \times \Delta^k$ and a cell $y \times \Delta^l$ with the $i$--th loop of $y$ non-constant, gluing $x \times \Delta^k$ onto the $i$--th loop of $y \times \Delta^l$ we obtain the union of the $\Delta^{k+l}$ cells which one gets by gluing the diagram $y$ onto the diagram $x$ in all possible ways. The signs come from the orientation of the cells.
By the definition the complex $x \in \plDc_{cact}(n, 1)$ gets mapped to the cells belonging to the operad $|\plDc_{cact}(-,1))|$. Since both are operadic ideals, we get the wished isomorphism on homology.
\end{proof}

%
%

\begin{Lemma}\label{le:cact}
We have a homeomorphism of operads in pointed spaces
\[ \left| \plDa_{cact}(-,1)_\bullet\right| / \left| \plDc_{cact}(-,1)_\bullet \right| \cong Cacti^c\]
where $Cacti^c$ is the one-point compactification of the cacti operad.
\end{Lemma}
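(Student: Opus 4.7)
The plan is to lift the combinatorial bijection $F$ from Lemma \ref{le:bijsul} to a continuous map of spaces by adding the length data, and then verify that this map is a homeomorphism of pointed operads. Concretely, I would define a map
\[ \Phi:\left|\plDa_{cact}(n,1)_\bullet\right| \to Cacti^c(n) \]
as follows: a point in $|\plDa_{cact}(n,1)_\bullet|$ is represented by a pair $((\Gamma,\gamma_1,\ldots,\gamma_n), (t_1,\ldots,t_N))$ where the $t_j\geq 0$ are the lengths assigned to the $N=\sum_i f_{F(\Gamma)}(i)$ boundary segments of the white vertex and $\sum_j t_j =1$. When the diagram is non-partly-constant, regrouping the $t_j$ by the loop $\gamma_i$ to which the segment belongs gives, for each $i$, a tuple $(t_1^i,\ldots,t_{f_{F(\Gamma)}(i)}^i)$ whose sum $c_i>0$, and $\sum_i c_i=1$. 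Send this point to the cactus on the Sullivan diagram $F(\Gamma)$ whose $i$-th lobe has circumference $c_i$ and internal subdivision $(t_j^i/c_i)_j\in\Delta^{f_{F(\Gamma)}(i)-1}$, i.e. to the point $((t_j^i/c_i)_{i,j},(c_1,\ldots,c_n))\in\prod_i\Delta^{f_{F(\Gamma)}(i)-1}\times\mathring{\Delta}^{n-1}$. All remaining points (diagrams in $|\plDc_{cact}|$, i.e. with some $\gamma_i$ constant) are sent to the basepoint $\infty$ of $Cacti^c(n)$.

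Next I would verify that $\Phi$ is well-defined and continuous. The face identifications in the simplicial set $\plDa_{cact}(-,1)_\bullet$ send a pair with some $t_j=0$ to the diagram obtained by contracting the $j$-th boundary segment and updating the affected loop. Under the bijection $F$ this precisely matches the CW-attaching relation for $Cacti$ recalled around Figure \ref{fig:equivcacti} (contracting a segment inside a lobe of length $c_i$), while the sole situation in which an entire loop $\gamma_i$ becomes constant corresponds to letting $c_i\to 0$, which is exactly the point at infinity of the one-point compactification. Hence $\Phi$ is a continuous map of pointed spaces from the quotient $|\plDa_{cact}(-,1)_\bullet|/|\plDc_{cact}(-,1)_\bullet|$ to $Cacti^c(-)$.

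To see that $\Phi$ is a homeomorphism, one notes that on each open top-dimensional cell it is simply the rescaling homeomorphism
\[ \mathring{\Delta}^{N-1}\xrightarrow{\;\cong\;} \prod_i\Delta^{f_{F(\Gamma)}(i)-1}\times\mathring{\Delta}^{n-1}, \qquad (t_j^i)\mapsto \bigl((t_j^i/c_i)_{i,j},(c_i)_i\bigr), \]
which is a standard homeomorphism of spaces of dimension $N-1$. Both sides have matching CW structures indexed by cacti-like $\oc{1}{n}$-Sullivan diagrams by Lemma \ref{le:bijsul}, the cell dimensions agree ($\sum_i(f_\Gamma(i)-1)+(n-1)=N-1$), and $\Phi$ is a cellular bijection compatible with the attaching maps; hence it is a homeomorphism of pointed spaces in each arity.

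The hard part will be the last step: verifying that $\Phi$ is a map of operads. The composition in $|\plDa_{cact}(-,1)_\bullet|$ glues a diagram $x$ of circumference $1$ onto the $i$-th loop of $y$ by first rescaling all the boundary segments of $x$ by $c_i$ (the total length of the segments of $\gamma_i$) and then inserting the rescaled white vertex in place of the loop $\gamma_i$ of $y$, identifying the start half-edge of $x$ with the starting marked leaf of $\gamma_i$. Under $F$, the planar embedding and the cyclic ordering induced from the loops line up so that this exactly reproduces the operadic insertion in $Cacti$ (rescale the inserted cactus so its total circumference equals $c_i$, then glue its global marked point to the marked point on the $i$-th lobe). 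The main technical point is to track this correspondence coherently across the identifications defining the CW structures on both sides and to check that collapsed loops on either side (sending the composite to $\infty$) agree; once this is done, $\Phi$ is the desired homeomorphism of operads in pointed spaces.
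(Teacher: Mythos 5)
Your proposal is correct and follows essentially the same route as the paper: the same rescaling formula $(t^i_j)\mapsto((t^i_j/c_i)_{i,j},(c_i)_i)$ built on the bijection of Lemma \ref{le:bijsul}, the same matching of face identifications with the cacti attaching relations, and the same operad-compatibility check. The only difference is presentational: the paper first reduces the quotient statement to a homeomorphism of complements via the point-set fact that $(X\setminus X')^c\cong X/X'$ for $X$ compact and $X'$ closed (which handles continuity at the basepoint for free) and then writes down an explicit inverse, whereas you work directly with the quotient and conclude via a continuous cellular bijection of compact Hausdorff spaces.
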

We do not see an easy proof of the fact that the structure maps in $Cacti$ are proper and hence that the structure maps in $Cacti^c$ are continuous. However, proving the homeomorphism in the above statement and checking that it preserves the obvious structure maps then implies the continuity of these maps.
\begin{proof}
Note that $\left| \plDc_{cact}(n,1)_\bullet \right|$ is a closed subspace of the compact space $\left| \plDa_{cact}(n,1)_\bullet\right|$ (which is the realization of a finite simplicial set). It is a point set topological exercise that when given a compact space $X$ with a closed subspace $X'$, the one-point compactification of $X \backslash X'$ (the complement of $X'$ in $X$) is homeomorphic to $X/X'$. Hence proving a homeomorphism 
\[\left| \plDa_{cact}(n,1)_\bullet\right| \backslash \left| \plDc_{cact}(n,1)_\bullet\right| \cong Cacti\]
induces a homeomorphism 
\[ \left| \plDa_{cact}(-,1)_\bullet\right| / \left| \plDc_{cact}(-,1)_\bullet \right| \cong Cacti^c\] 
which sends $ \left| \plDc_{cact}(-,1)_\bullet \right|$ to the point at infinity of $Cacti^c$.

Take $(x, y)$ with $x=(\Gamma, \gamma_1, \cdots, \gamma_{n}) \in \plDa_{cact}(-,1)_k$ and $y=(s_0, \cdots, s_k) \in \Delta^n$ such that $[(x,y)] \in \left| \plDa_{cact}(n,1)_\bullet\right|\backslash \left| \plDc_{cact}(n,1)_\bullet\right|$. This is equivalent to assuming that $x$ is non-constant and that in $y$ not all of the $s_i$ belonging to the same loop in $x$ are zero. More precisely, for $F(x)$ the corresponding cacti-like Sullivan diagram defined in Lemma \ref{le:bijsul} and $f_\Gamma$ the map which counts how many of the boundary components of the white vertex belong to each boundary cycle, we reorder and relabel $(s_0, \cdots, s_k)$ to $(t^1_1, \ldots, t^1_{f_{F(\Gamma)}(1)}, t_1^2, \ldots, t^n_{f_{F(\Gamma)}(n)})$. Then the assumption on $y$ is equivalent to $R_i=\sum_j t_j^i \neq 0$ for all $i$. 

We send the pair $(x,y)$ to 
\[ ((\frac{t^1_1}{R_1}, \ldots, \frac{t^1_{f_{F(\Gamma)}(1)}}{R_1}), \ldots, (\frac{t^n_1}{R_n}, \ldots, \frac{t^n_{f_{F(\Gamma)}(n)}}{R_n}), (R_1, \ldots, R_n)) \in \Delta^{f_\Gamma(1)-1} \times \cdots \times \Delta^{f_\Gamma(n)-1} \times \mathring{\Delta}^{n-1}\]
lying in $Cacti(n)$ in the ``cell'' belonging to $F(\Gamma)$. Away from the boundary the coordinates of the simplex (i.e. the $s_i$) just give us the lengths of the pieces of the arc of a cacti.

The map is well-defined, since the equivalence relation given by the geometric realization (contracting a piece of the boundary is equivalent to setting the corresponding $t_i^j$ zero) is the same equivalence relation as we have on cacti.

We next construct an inverse.
Given 
\[( (t^1_1, \cdots, t^1_{f_\Gamma(1)} , t^2_1, \ldots, t^n_{f_\Gamma(n)}), (R_1, \ldots, R_n)) \in  \Delta^{f_\Gamma(1)-1} \times \cdots \times \Delta^{f_\Gamma(n)-1} \times \mathring{\Delta}^{n-1}\]
in the ``cell'' corresponding to a cacti-like Sullivan diagram $\Gamma$, 
it is mapped to
\[[(K(\Gamma), (R_1 \cdot t^1_1, \cdots, R_1 \cdot t^1_{f_\Gamma(1)}, R_2 \cdot t^2_1, \ldots,  R_n \cdot t^n_{f_\Gamma(n)}))] \in \left| \plDa_{cact}(n,1)_\bullet\right|. \]
It is not hard to check that this map is an inverse to the first one.
Moreover, both maps are continuous, hence we have constructed the asked homeomorphism.

Because of the way we defined the maps, it is also clear that composition is preserved on 
$\left| \plDa_{cact}(n,1)_\bullet\right| \backslash \left| \plDc_{cact}(n,1)_\bullet\right|$. Composition on $\left| \plDa_{cact}(n,1)_\bullet\right| / \left| \plDc_{cact}(n,1)_\bullet\right| $ sends everything containing the basepoint to the basepoint and hence agrees with the composition of the one-point compactification.

\end{proof}
The next step of the proof goes along the lines of \cite[Section 5]{kauf05}.
We first need to define two further operads, one given by a semi-direct product and the analog given by the semi-direct smash product.

Recall the simplex operad $\D$ with $\D(n)=\mathring{\Delta}^{n-1}$. Let $\D \ltimes Cacti^1$ be the operad with $(\D \ltimes Cacti^1)(n)=\D(n) \times Cacti^1(n)$ with diagonal $\Sigma_n$--action and the composition which for $(d,c) \in \D(n) \times Cacti^1(n)$ and $(d',c') \in \D(k) \times Cacti^1(k)$ is defined by
\[(d,c) \circ_i (d', c')=(d \circ_i d', c \circ_i^{d'} c')\]
with $c \circ_i^{d'} c'$ computed via the following procedure: Write $d'=(d_1, \cdots, d_k)$ and rescale $c'$ by $d'$, i.e. scale the $j$--th lobe by $d_j$. Then we glue $c'$ into the $i$--th lobe of $c$ and rescale all lobes back to length $1$. A more general theory of the semi-direct product of operads can be found in \cite[Section 1.3]{kauf05}.

Similarly, we define the operad $\Sph\  \overset{\ltimes}{\wedge} \  Cacti^1_+$ with pointed spaces $(\Sph \ \overset{\ltimes}{\wedge} \ Cacti^1_+)(n)=\Sph(n) \wedge Cacti^1_+(n)$, where $Cacti^1_+(n)$ is the space $Cacti^1(n)$ with added disjoint basepoint. The composition is defined by
\[(d \wedge c) \circ_i (d'\wedge c')=(d \circ_i d')\wedge (c \ \widetilde{\circ_i^{d'}} \ c')\]
with
\[ c \  \widetilde{\circ_i^{d'}}  \ c'=\begin{cases} \ast & \text{if any of } c, c' \text{ or } d \text{ is the base point}\\c \circ_i^{d'} c' & \text{else.}\end{cases}\]

On the level of spaces we have
\[(\D(n) \times Cacti^1(n))^c \cong \D(n)^c \wedge Cacti^1(n)^c \cong \Sph(n) \wedge Cacti^1_+(n)\]
since $Cacti^1(n)$ is a finite CW-complex and hence compact, i.e. its one-point compactification just adds a disjoint basepoint. 
We defined the operad structures exactly such that
\[(\D \ltimes Cacti^1)^c \cong \Sph \ \overset{\ltimes}{\wedge} \ Cacti^1_+.\]

Rewriting the first part of \cite[Theorem 5.2.2]{kauf05} in terms of the simplex operad, we have:
\begin{Lemma}
There is a homeomorphism of operads 
\[Cacti \cong \D \ltimes Cacti^1\]
and hence
\[Cacti^c \cong (\D \ltimes Cacti^1)^c \cong \Sph \ \overset{\ltimes}{\wedge} \ Cacti^1_+.\]
\end{Lemma}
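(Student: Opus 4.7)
The plan is to reduce the first homeomorphism to Kaufmann's Theorem 5.2.2 in \cite{kauf05} by transporting his scaling operad description through a homeomorphism of operads $\OR_{>0} \cong \D$, and then to deduce the compactified version by a formal one-point compactification argument.

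First I would spell out the operad isomorphism $\OR_{>0} \cong \D$. The level-wise map $\R_{>0}^n \to \mathring{\Delta}^{n-1}$ sending $(r_1,\ldots,r_n)$ to $(r_1/R,\ldots,r_n/R)$ with $R=\sum r_i$ is a homeomorphism, $\Sigma_n$-equivariant, and (as a direct check shows) compatible with the operadic composition $\circ_i$: a tuple composed in $\OR_{>0}$ rescales by $r_i$, whereas in $\D$ it rescales by $s_i=r_i/R$, which after renormalising against the new total gives exactly the same normalised tuple. Hence $\OR_{>0} \cong \D$ as topological operads. This isomorphism intertwines the rescaling actions used in the semi-direct product constructions on $Cacti^1$, so it induces an isomorphism of operads $\OR_{>0} \ltimes Cacti^1 \cong \D \ltimes Cacti^1$.

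Next I would invoke Kaufmann's result (the first part of \cite[Theorem 5.2.2]{kauf05}) which provides the homeomorphism of operads $Cacti \cong \OR_{>0} \ltimes Cacti^1$. Combining this with the previous step gives the desired $Cacti \cong \D \ltimes Cacti^1$. Concretely, the underlying homeomorphism of spaces is just the splitting $Cacti(n) \cong Cacti^1(n) \times \mathring{\Delta}^{n-1}$ already recorded in the excerpt, where the $\mathring{\Delta}^{n-1}$ factor records the circumferences of the $n$ lobes; the operadic composition formula for $\D \ltimes Cacti^1$ is then the formula for $Cacti$ written out in these coordinates.

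For the second statement, I would use that the functor $(-)^c$ of one-point compactification turns products of locally compact Hausdorff spaces into smash products, that is $(X\times Y)^c \cong X^c \wedge Y^c$ when $X,Y$ are locally compact Hausdorff. Applying this levelwise to $(\D \ltimes Cacti^1)(n) = \D(n)\times Cacti^1(n)$ yields $\D(n)^c \wedge Cacti^1(n)^c$; since $Cacti^1(n)$ is a finite CW-complex, $Cacti^1(n)^c = Cacti^1(n)_+$, and by definition $\D(n)^c = \Sph(n)$. The operadic composition on $Cacti^c$ was defined precisely to correspond to the smashed composition on $\Sph \ \overset{\ltimes}{\wedge} \ Cacti^1_+$ (sending anything containing a basepoint to the basepoint), so the levelwise homeomorphisms assemble into an isomorphism of operads in pointed spaces.

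The only real subtlety, and the point I would take most care over, is the compactness/properness hypothesis needed for one-point compactification to behave well with the operad structure: the structure maps of $\D \ltimes Cacti^1$ must be proper so that their one-point compactifications remain continuous. This follows because each space $\D(n)\times Cacti^1(n)$ is locally compact Hausdorff and the composition maps are continuous maps between such spaces, and more concretely because at the level of $Cacti$ the composition maps are continuous maps of compact spaces after one removes the open simplex factor, so the relevant preimages of compacta are compact. As the excerpt notes, it is this homeomorphism that implicitly guarantees the continuity of the structure maps on $Cacti^c$ in the first place.
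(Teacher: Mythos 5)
Your proposal is correct and follows essentially the same route as the paper: the paper obtains the first homeomorphism by rewriting the first part of Kaufmann's Theorem 5.2.2 through the identification $\OR_{>0}\cong\D$ (recorded just after the definition of $\D$), and deduces the compactified statement from the levelwise identification $(\D(n)\times Cacti^1(n))^c\cong \D(n)^c\wedge Cacti^1(n)^c\cong\Sph(n)\wedge Cacti^1_+(n)$, using compactness of $Cacti^1(n)$ and the fact that the operad structure on $\Sph\ \overset{\ltimes}{\wedge}\ Cacti^1_+$ was defined precisely to match, exactly as you do. The one caveat is that your parenthetical claim that properness of the structure maps ``follows because the spaces are locally compact Hausdorff and the maps continuous'' is not a valid implication on its own; the paper deliberately avoids proving properness directly and instead derives continuity of the compactified structure maps from the homeomorphism with a quotient of compact spaces, which is the correct mechanism you also state afterwards.
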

As the last step, we need the analog of the second part \cite[Theorem 5.2.2]{kauf05}, which is the analog of the comparison between loops and Moore loops:
\begin{Lemma}
There is a homotopy equivalence of pointed quasi-operads 
\[\Sph \ \overset{\ltimes}{\wedge} \ Cacti^1_+ \simeq \Sph \wedge Cacti^1_+.\]
\end{Lemma}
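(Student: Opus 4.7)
The plan is to model the proof on Kaufmann's proof of the second part of \cite[Theorem 5.2.2]{kauf05}, which is itself the operadic analogue of the classical comparison between ordinary and Moore loop spaces. The key observation is that the two pointed quasi-operads $\Sph \overset{\ltimes}{\wedge} Cacti^1_+$ and $\Sph \wedge Cacti^1_+$ have literally the same underlying pointed space $\Sph(n)\wedge Cacti^1_+(n)$ at each arity $n$; only the composition maps $\circ_i$ differ. So I would take the identity at each level as a candidate map and show that the two systems of composition maps are homotopic through continuous maps of pointed spaces.

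Concretely, I would define a homotopy between the two composition laws by linearly interpolating between the rescaling factors. For elements $(d\wedge c) \in \Sph(n)\wedge Cacti^1_+(n)$ and $(d'\wedge c') \in \Sph(k)\wedge Cacti^1_+(k)$ with no input equal to the basepoint, write $d'=(d'_1,\ldots,d'_k)\in \mathring{\Delta}^{k-1}$ and set
\[
H_t\bigl((d\wedge c)\wedge(d'\wedge c')\bigr)
\;=\; (d\circ_i d')\wedge \bigl(c\,\circ_i^{\,(1-t)(1/k,\ldots,1/k)+t\cdot d'}\,c'\bigr),
\qquad t\in [0,1].
\]
For every $t\in[0,1]$ the interpolated weight vector lies in the open simplex, so the rescaled insertion $c\circ_i^{(\cdot)}c'$ is well-defined in $Cacti^1$. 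At $t=1$ this recovers the semi-direct composition, and at $t=0$ it recovers the direct smash-product composition (which uses uniform weights $1/k$). Next I would check that $H_t$ is $\Sigma_{n+k-1}$-equivariant in the obvious sense, so that the identity map at each level gives a homotopy equivalence of pointed quasi-operads (unit compatibility is automatic since both operads share the same unit).

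The main obstacle will be extending $H_t$ continuously across the basepoint of the compactified sphere operad. The rescaling operation $c\circ_i^{d'}c'$ is defined by dividing arc lengths by sums of simplex coordinates, and as $d$ or $d'$ approaches the basepoint of $\Sph(n)=(\mathring{\Delta}^{n-1})^c$ we leave the open simplex where these formulas make sense. One therefore has to argue carefully, via a point-set topology estimate, that for every $t\in[0,1]$ simultaneously the map $H_t$ sends any sequence with an input tending to the basepoint to a sequence tending to the basepoint of $\Sph(n+k-1)\wedge Cacti^1_+(n+k-1)$, and that this convergence is uniform in $t$. Granted this, $H_t$ descends to a continuous homotopy on the smash products and preserves basepoints, completing the proof that the identity is a weak equivalence of pointed quasi-operads $\Sph \overset{\ltimes}{\wedge} Cacti^1_+ \simeq \Sph\wedge Cacti^1_+$.
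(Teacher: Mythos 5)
Your proposal is correct and follows essentially the same route as the paper: the paper's proof also takes the identity on underlying spaces and homotopes the perturbed composition to the unperturbed one by moving $d'$ along the line segment to the barycenter $(1/k,\ldots,1/k)$, which is exactly your interpolation $(1-t)(1/k,\ldots,1/k)+t\,d'$. Your added attention to continuity of the homotopy at the basepoint of the compactified operad is a reasonable extra check that the paper leaves implicit in its appeal to Kaufmann's argument.
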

\begin{proof}
Completely similar to \cite{kauf05}, the perturbed and unperturbed multiplications are homotopic. Choosing a line segment from $d' \in \Delta^{k-1}$ to the midpoint of the simplex $(1/k, \ldots, 1/k)$ and denoting the corresponding path $d'_t$, the homotopy for the pointed quasi-operad composition is defined by
\[(d,c) \circ_i (d', c') = (d \circ_i d', c  \ \widetilde{\circ_i^{d'_t}} \  c')\]
where we rescale $c'$ by $d'_t$. 
\end{proof}

 Now we are able to prove the main proposition of the section:
\begin{proof}[Proof of Theorem \ref{Th:shiftbv}]
Collecting all the homeomorphisms and homotopy equivalences of (quasi-)operads shown in this section, we get the following isomorphism of operads,
\begin{align*}
H_*(\plD_{cact}(-, 1)) &\cong H_*(\plDa_{cact}(-,1) / \plDc_{cact}(-,1))\\
&\cong H_*(\left|\plDa_{cact}(-,1)_\bullet\right| ,  \left|\plDc_{cact}(-,1))_\bullet \right|) \\
&\cong \widetilde{H}_*(\left|\plDa_{cact}(-,1)_\bullet\right| / \left|\plDc_{cact}(-,1))_\bullet \right| )\\ 
&\cong \widetilde{H}_*( Cacti^c)\\ 
&\cong \widetilde{H}_* (\Sph \ \overset{\ltimes}{\wedge} \ Cacti^1_+)\\
&\cong \widetilde{H}_* (\Sph \wedge Cacti^1_+)\\
& \cong \dest \otimes H_*(Cacti)
\end{align*}
where the last step is the Kuenneth morphism $ \widetilde{H}_*(X) \otimes \widetilde{H}_*(Y) \to \widetilde{H}_*(X \wedge Y) $ which is an isomorphism since $\widetilde{H}_*(X)$ is free in the case considered here. Moreover, the Kuenneth morphism is a symmetric monoidal functor and thus preserves the operad structure.

Since $H_*(Cacti)$ is the BV operad, the isomorphism of operads 
\[H_*(\plD_{cact}(-, 1)) \cong \dest \otimes BV\]
follows.
\end{proof}
\begin{appendix}
\section{An overview over the complexes of looped diagrams}\label{sec:overview}

In the two tables below we give an overview over all the complexes defined in the paper. Table \ref{tab:defs} gives the basic definitions of the complexes and Table \ref{tab:defs2} applies constructions to complexes $\cC$ from the first table.
\begin{table}[ht]
\begin{center}
    \begin{tabular}{ | p{3cm} |p{2.5cm}| p{6cm}| p{1.5cm} |}
    \hline
    $\cC$ & Name (if given) &Description of generators $x=(\Gamma, \gamma_1, \cdots, \gamma_{n_1}) \in \cC$& Place defined in the paper \\ \midrule[2pt]
  \vspace{0.5pt}  $\lDa(\oc{n_1}{m_1},\oc{n_2}{m_2})$& \emph{looped diagrams} &$\Gamma$ a commutative Sullivan diagram with $n_1+m_1+m_2$ labeled leaves and $n_1$ loops starting at the first $n_1$--labeled leaves &Def. \ref{def:lD}\\ \hline \vspace{0.5pt}
     $\lDa_+(\oc{n_1}{m_1},\oc{n_2}{m_2})$& \emph{looped diagrams with positive boundary condition} & $x \in \lDa(\oc{n_1}{m_1},\oc{n_2}{m_2})$ such that every connected component contains at least one white vertex or one of the $m_2$ last labeled leaves&Def. \ref{def:posbdry}\\ \hline   
 \vspace{0.5pt}    $\plDa(\oc{n_1}{m_1},\oc{n_2}{m_2})$, $\plDa_+(\oc{n_1}{m_1},\oc{n_2}{m_2})$ & \emph{positively oriented looped diagrams (with positive boundary condition)} & $x \in \lDa(\oc{n_1}{m_1},\oc{n_2}{m_2})$ (or $\lDa_+(\oc{n_1}{m_1},\oc{n_2}{m_2})$, resp.)  such that all loops are positively oriented& Def. \ref{def:plD}\\ \hline      \vspace{0.5pt} $\plDa_{start}(\oc{n_1}{m_1},\oc{n_2}{m_2})$&  & $x \in \plDa(\oc{n_1}{m_1},\oc{n_2}{m_2})$ such that each loop $\gamma_i$ consists of exactly one boundary segment of a white vertex which is the first boundary segment of that white vertex& Def. \ref{def:start}\\ \hline  \vspace{0.5pt}
$\plDa_\Com(\oc{n_1}{m_1}, \oc{n_2}{m_2})$& &$x \in \plDa_+(\oc{n_1}{m_1},\oc{n_2}{m_2})$ such that $\Gamma$ is a disjoint union of $n_2$ white vertices with trees attached to it and $m_2$ labeled outgoing leaves with trees attached to them& Def. \ref{def:com}\\\hline
\vspace{0.5pt}$\tplDa(\oc{n_1}{m_1}, \oc{n_2}{m_2})$&& $x \in \plDa_\Com(\oc{n_1}{m_1}, \oc{n_2}{m_2})$ built via a specific procedure described in Def. \ref{def:tcom} & Def. \ref{def:tcom}\\\hline  \vspace{0.5pt}
    $\plDa_{cact}(n_1,n_2)$&  & $x \in \plDa_+(\oc{n_1}{0},\oc{n_2}{0})$, all white vertices of $\Gamma$ connected, $\Gamma$ is embeddable into the plane, all loops irreducible, every bound. segm. of white vert. is part of exactly one loop, one constant loop per genus& Def. \ref{def:cactdiag}\\ 
    \hline

    \end{tabular}
\end{center}
\caption{Definitions of the (sub)complexes of $\lDa$}
\label{tab:defs}
\end{table}

\begin{table}[ht]
\begin{center}
    \begin{tabular}{ | p{1.5cm} |p{1cm}|p{2.5cm}| p{6.5cm}| p{1cm} |}
    \hline
    $\cD$ & Com\allowbreak{-}plex?&Name &Definition of $x \in \cD$& Place defined in the paper \\ \midrule[2pt]
       $ \cC_d$&Yes& &the degree of a looped diagram $(\Gamma, \langle\gamma^1_1, \ldots, \gamma^{t_1}_1\rangle, \ldots, \langle\gamma^1_n, \ldots, \gamma^{t_n}_n\rangle)$ 
 is shifted by $-d \cdot \chi(\Gamma, \partial_{out})$ &Def. \ref{def:shift}\\ \hline
   $ \cC^{t_1, \cdots, t_{n_1}}$&No& \emph{irreducible looped diagrams of type $(t_1, \cdots, t_{n_1})$}&
$x=(\Gamma, \langle\gamma^1_1, \ldots, \gamma^{t_1}_1\rangle, \ldots, \langle\gamma^1_n, \ldots, \gamma^{t_n}_n\rangle)$ (for the notation cf. Section  \ref{sec:type}) such that all the loops $\gamma_i^j$ are irreducible &Def. \ref{def:type}\\ \hline
$\cC^{cst}$& Yes &p\emph{artly constant diagrams}& $x \in \cC^{t_1, \cdots, t_{n_1}}$ with at least one $t_i=0$ (spanned by those $x=(\Gamma, \gamma_1, \cdots, \gamma_{n_1})$ with one of the $\gamma_i$ constant)& Def. \ref{def:cst}\\ \hline
 
 $\cC^{>0}$& Yes &\emph{non-constant diagrams}& $x \in \cC^{t_1, \cdots, t_{n_1}}$ with all $t_i>0$ (split complement of $\cC^{cst}$)& Def. \ref{def:non-cst}\\ \hline
 $i\cC$& Yes &\emph{Products of irreducible looped diagrams}& $\cD= \prod_{t_1, \cdots, t_{n_1}} \cC^{t_1, \cdots, t_{n_1}}$, i.e. infinite sums of elements, in general composition is not well-defined& Def. \ref{def:ilD}\\ \hline
    \end{tabular}
\end{center}
\caption[Constructions applied to subcomplexes of $\lDa$]{Constructions applied to complexes $\cC \subseteq \lDa(\oc{n_1}{m_1},\oc{n_2}{m_2})$}
\label{tab:defs2}
\end{table}
\end{appendix}
\FloatBarrier
\bibliographystyle{alpha}

\begin{thebibliography}{Abb13b}

\bibitem[Abb13a]{abba13}
Hossein Abbaspour.
\newblock {On algebraic structures of the Hochschild complex}.
\newblock {\em arXiv preprint arXiv:1302.6534}, 2013.

\bibitem[Abb13b]{abba13b}
Hossein Abbaspour.
\newblock {On the Hochschild homology of open Frobenius algebras}.
\newblock {\em arXiv preprint arXiv:1309.3384}, 2013.

\bibitem[AK13]{aron13}
Gregory Arone and Marja Kankaanrinta.
\newblock The sphere operad.
\newblock {\em to appear in Bull. Lond. Math. Soc.}, 2013.

\bibitem[CS99]{chas99}
Moira Chas and Dennis Sullivan.
\newblock String topology.
\newblock {\em arXiv preprint math/9911159}, 1999.

\bibitem[CV05]{cohe05}
Ralph~L Cohen and Alexander~A Voronov.
\newblock Notes on string topology.
\newblock {\em arXiv preprint math/0503625}, 2005.

\bibitem[Get94]{getz94}
Ezra Getzler.
\newblock {Batalin-Vilkovisky algebras and two-dimensional topological field
  theories}.
\newblock {\em Communications in mathematical physics}, 159(2):265--285, 1994.

\bibitem[GH09]{gore09}
Mark Goresky and Nancy Hingston.
\newblock Loop products and closed geodesics.
\newblock {\em Duke Mathematical Journal}, 150(1):117--209, 2009.

\bibitem[God07]{godi07}
V{\'e}ronique Godin.
\newblock Higher string topology operations.
\newblock {\em Arxiv preprint arxiv:0711.4859}, 2007.

\bibitem[Kau05]{kauf05}
Ralph~M Kaufmann.
\newblock On several varieties of cacti and their relations.
\newblock {\em Algebraic \& Geometric Topology}, 5:237--300, 2005.

\bibitem[Kla13]{kla13}
Angela Klamt.
\newblock Computation of the formal operations on the {H}ochschild homology of
  commutative algebras.
\newblock {\em unpublished, available at
  \url{http://www.math.ku.dk/~angela/FormOpComAlg.pdf}}, 2013.

\bibitem[KLP03]{kauf03}
Ralph~M Kaufmann, Muriel Livernet, and Robert C~Penner.
\newblock Arc operads and arc algebras.
\newblock {\em Geometry and Topology}, 7(1):511--568, 2003.

\bibitem[Koc04]{kock2004}
Joachim Kock.
\newblock {\em {Frobenius algebras and 2D topological quantum field theories}}.
\newblock Cambridge Univ Pr, 2004.

\bibitem[Lod89]{loda89}
Jean-Louis Loday.
\newblock Op{\'e}rations sur l'homologie cyclique des alg{\`e}bres
  commutatives.
\newblock {\em Inventiones mathematicae}, 96(1):205--230, 1989.

\bibitem[LP08]{Laud2008}
Aaron~D Lauda and Hendryk Pfeiffer.
\newblock {Open-closed strings: Two-dimensional extended TQFTs and Frobenius
  algebras}.
\newblock {\em Topology and its Applications}, 155(7):623--666, 2008.

\bibitem[LS07]{lamb07}
Pascal Lambrechts and Don Stanley.
\newblock Poincar{\'e} duality and commutative differential graded algebras.
\newblock {\em arXiv preprint math/0701309}, 2007.

\bibitem[LUA08]{lupe08}
Ernesto Lupercio, Bernardo Uribe, and Miguel~A Axicotencatl.
\newblock Orbifold string topology.
\newblock {\em Geometry \& Topology}, 12:2203--2247, 2008.

\bibitem[LV12]{loda12}
Jean-Louis Loday and Bruno Vallette.
\newblock {\em Algebraic operads}, volume 346.
\newblock Springer, 2012.

\bibitem[Pen87]{penn87}
Robert~C Penner.
\newblock {The decorated Teichm{\"u}ller space of punctured surfaces}.
\newblock {\em Communications in Mathematical Physics}, 113(2):299--339, 1987.

\bibitem[TZ06]{trad06}
Thomas Tradler and Mahmoud Zeinalian.
\newblock {On the cyclic Deligne conjecture}.
\newblock {\em Journal of Pure and Applied Algebra}, 204(2):280--299, 2006.

\bibitem[Vor05]{voro05}
Alexander~A Voronov.
\newblock Notes on universal algebra.
\newblock In {\em Graphs and Patterns, Mathematics and Theoretical Physics,
  Amer. Math. Soc., Proc. Symp. Pure Math}, volume~73, pages 81--103, 2005.

\bibitem[Wah12]{wahl12}
Nathalie Wahl.
\newblock Universal operations on {H}ochschild homology.
\newblock {\em arXiv preprint arXiv:1212.6498}, 2012.

\bibitem[WW11]{wahl11}
Nathalie Wahl and Craig Westerland.
\newblock {Hochschild homology of structured algebras}.
\newblock {\em Arxiv preprint arXiv:1110.0651}, 2011.

\end{thebibliography}

\end{document}